\newtheorem{definition}{Definition}
\newtheorem{theorem}{Theorem}
\newtheorem{corollary}{Corollary}
\newtheorem{lemma}{Lemma}
\newtheorem{prop}{Proposition}
\newcommand{\pre}{\ensuremath{F}}
\newcommand{\R}{\ensuremath{{\mathbb{R}}}}
\newcommand{\X}{\ensuremath{{\mathbf{X}}}}
\newcommand{\Y}{\ensuremath{{Y}}}
\newcommand{\e}{\ensuremath{{\epsilon}}}
\newcommand{\minbeta}{\ensuremath{{M(\truebeta)}}}
\newcommand{\Xa}{\ensuremath{{X(S)}}}
\newcommand{\Xb}{\ensuremath{{X(S^c)}}}
\newcommand{\truebeta}{\ensuremath{{\beta^*}}}
\newcommand{\truebetaj}{\ensuremath{{\beta_j^*}}}
\newcommand{\betA}{ {\truebeta(S)}}
\newcommand{\hbet}{\ensuremath{\hat \beta(\lam)}}
\newcommand{\hbetA}{\ensuremath{\hat{\beta}^{(1)}}}
\newcommand{\p}{\ensuremath{{p}}}
\newcommand{\lam}{\ensuremath{{\lambda}}}
\newcommand{\sign}{\ensuremath{{\mbox{sign}}}}
\newcommand{\Prob}[1]{\ensuremath{P\left( #1\right)}}
\begin{document}

\begin{frontmatter}

\title{Preconditioning to comply with the Irrepresentable Condition}
\runtitle{Preconditioning and the Lasso}


\author{\fnms{Jinzhu} \snm{Jia}\ead[label=e1]{jzjia@math.pku.edu.cn}\thanksref{t1}\thanksref{t2}}
\address{LMAM School of Mathematical Sciences \\
         and Center for Statistical Science\\
         Peking University\\
         Beijing 100871, China \\
\printead{e1}}
\affiliation{Peking University}

\and
\author{\fnms{Karl} \snm{Rohe}\ead[label=e2]{karlrohe@stat.wisc.edu}\thanksref{t1}\thanksref{t2}}
\address{Department of Statistics\\ University of Wisconsin-Madison\\ WI 53706, USA\\
\printead{e2}}
\affiliation{University of Wisconsin--Madison}
\thankstext{t1}{The authors contributed equally and are listed in alphabetical order.}
\thankstext{t2}{Thank you to Bin Yu and Peter Qian for fruitful conversations while this idea was still unformed.   Important steps of this research were conducted at the Institute for Mathematics and its Applications' (IMA's) conference ``Machine Learning: Theory and Computation."  Thank you to IMA for both the conference and the financial support.  Thank you to Nicolai Meinshausen for sharing code that contributed to Section \ref{geo}.  Thank you to Sara Fernandes-Taylor for helpful comments.} 

\runauthor{Jia, Rohe}

\begin{abstract}
Preconditioning is a technique from numerical linear algebra that can accelerate algorithms  to solve systems of equations.  
In this paper, we demonstrate how preconditioning can circumvent a stringent assumption for sign consistency in sparse linear regression. 
Given $\X \in \R^{n \times p}$ and $Y \in \R^n$ that satisfy the standard regression equation, this paper demonstrates that even if the design matrix $\X$ does not satisfy the irrepresentable condition for the Lasso, the design matrix $\pre  \X$ often does, where $\pre  \in \R^{n\times n}$ is a  preconditioning matrix defined in this paper. 
By computing the Lasso on $(\pre  \X, \pre  Y)$, instead of on $(\X, Y)$, the necessary assumptions on $\X$ become much less stringent.   

Our  preconditioner $\pre $ ensures that the singular values of the design matrix are either zero or one. When $n\ge p$,  the columns of $\pre  \X$ are orthogonal and the preconditioner always circumvents the stringent assumptions.  When $p\ge n$, $\pre$  projects the design matrix onto the Stiefel manifold; the rows of $\pre  \X$ are orthogonal.  We give both theoretical results and simulation results to show that, in the high dimensional case, the preconditioner helps to circumvent the stringent assumptions,  improving the statistical performance of a broad class of model selection techniques in linear regression.  Simulation results are particularly promising. 

\end{abstract}

\begin{keyword}
\kwd{Preconditioning}
\kwd{Lasso}
\kwd{Sign consistency}
\end{keyword}

\end{frontmatter}


\section{Introduction}

Recent breakthroughs in information technology have provided new experimental capabilities in astronomy, biology, chemistry, neuroscience, and several other disciplines.  Many of these new measurement devices create data sets with many more ``measurements" than units of observation.  For example, due to experimental constraints, both fMRI and microarray experiments often include tens or hundreds of people.  However, the fMRI and microarray technologies can simultaneously measure 10's or 100's of thousands of different pieces of information for each individual.  Classical statistical inference in such ``high-dimensional" or $p>\hspace{-.06in}>n$ regimes is often impossible.  Successful experiments must rely on some type of sparsity or low-dimensional structure.  Several statistical techniques have been developed to exploit various types of structure, including sparse high dimensional regression.

Sparse high dimensional regression aims to select the few measurements (among the 10's of thousands) that relate to an outcome of interest;  these techniques can screen out the irrelevant variables. A rich theoretical literature describes the consistency of various sparse high dimensional regression techniques, highlighting several potential pitfalls   (e.g. \cite{knight2000asymptotics, fan2001variable, greenshtein2004persistence, donoho2006stable, meinshausen2006high, tropp2006just, zhao2006model, zou2006adaptive, zhang2008sparsity, fan2008sure, wainwright2009, meinshausen2009lasso, bickel2009simultaneous,  zhang2010nearly, shao2012estimation}).   In this literature, one of the most popular measures of asymptotic performance is sign consistency, which implies that the estimator selects the correct set of predictors asymptotically.  One of the most popular methods in sparse regression, the Lasso (defined in Section \ref{prelim}), requires a stringent ``irrepresentable condition" to achieve sign consistency \citep{tibshirani1996regression, zhao2006model}.  The irrepresentable condition restricts the correlation between the columns of the design matrix in a way made explicit in Section \ref{prelim}.

	 It is well known that the Ordinary Least Squares (OLS) estimator performs poorly when the columns of the design matrix are highly correlated.  However, this problem can be overcome by more samples; OLS is still consistent.   
With the Lasso, the detrimental effects of correlation are more severe.  If the columns of the design matrix are correlated in a way that violates the irrepresentable condition, then the Lasso will not be sign consistent (i.e. statistical estimation will not improve with more samples).

To avoid the irrepresentable condition, several researchers have proposed alternative penalized least square methods that use a different penalty from the Lasso penalty.
For example,  \cite{fan2001variable} propose SCAD, a concave penalty function;  \cite{zhang2010nearly} proposes the minimax concave penalty (MCP), another concave penalty function, and gives high probability results for PLUS, an optimization algorithm.  Unfortunately, these concave penalties lead to nonconvex optimization problems.  Although there are algorithmic approximations for these problems and some high probability results \citep{zhang2010nearly}, the estimator that these algorithms compute is not necessarily the estimator that optimizes the penalized least squares objective.  The Adaptive Lasso provides another alternative penalty  which is a data adaptive and heterogeneous \citep{zou2006adaptive}.  Unfortunately, its statistical performance degrades in high dimensions. 

In penalized least squares, there is both a penalty and a data fidelity term (which makes the estimator conform to the data).  The papers cited in the previous paragraph adjust the type of sparse penalty.  In this paper, we precondition the data, which is equivalent to adjusting the data fidelity term.  Other researchers have previously proposed alternative ways of measuring data fidelity \citep{van2008high}, but their alternatives are meant to accommodate different error distributions, not to avoid the irrepresentable condition. 
Similar to work presented here, \cite{xiong2011orthogonalizing} also propose adjusting the data fidelity term to avoid the irrepresentable condition.  They proposed a procedures which (1) makes the design matrix orthogonal by adding rows, and (2) applies an EM algorithm, with SCAD, to estimate the outcomes corresponding to the additional rows in the design matrix.  Although this algorithm performs well in the low dimensional case, it is computationally expensive in high dimensional problems.  The procedure proposed in this paper adjusts the data fidelity term by preconditioning, a preprocessing step.  Relative to these alternative methods, preconditioning is easier to implement,   requiring only a couple lines of code before calling any standard Lasso package.  Furthermore, this type of preprocessing is widely studied in a related field, numerical linear algebra.

Preconditioning describes a popular suite of techniques in numerical linear algebra that stabilize and accelerate  algorithms to solve systems of equations  (e.g. \cite{axelsson1985survey, golub1996matrix}).  In a system of equations, one seeks the vector $x$ that satisfies $Ax=b$, where  $A \in \R^{n \times n}$ is  a given matrix and $b\in \R^n$ is a given vector.  The speed of most solvers is inversely proportional to the condition number of the matrix $A$, the ratio of its largest eigenvalue over its smallest eigenvalue.  When the matrix $A$ has both large eigenvalues and small eigenvalues,  the system $Ax = b$ is  ``ill-conditioned."  For example, if the matrix $A$ has highly correlated columns,  it will be ill-conditioned.  One can ``precondition" the problem by left multiplying the system by a matrix $T$, $TAx=Tb$; the preconditioner $T$  is designed to shrink the condition number of $A$ thereby accelerating the system solver.  If the columns of $A$ are highly correlated, then preconditioning decorrelates the columns. 

The system of equations $Ax = b$ has many similarities with the  linear regression equation 
\begin{equation} \label{regeq}
Y = \X \truebeta + \epsilon, 
\end{equation}
where we observe $Y \in \R^n$ and $\X \in \R^{n \times p}$, and $\epsilon \in \R^n$ contains unobserved iid noise terms with $E(\epsilon) = 0$ and $var(\epsilon)  = \sigma^2 I_n$.
From the system of equations $Ax =b$, the regression equation adds an error term and allows for the design matrix to be rectangular.  
Where numerical linear algebraists use preconditioners for algorithmic speed, 
this  paper shows that preconditioning can circumvent the irrepresentable condition, improving the statistical performance of the Lasso. 



		 Just as preconditioning sidesteps the difficulties presented by correlation in systems of equations, preconditioning can sidestep the difficulties in sparse linear regression.  Numerical algebraists precondition systems of linear equations to make algorithms faster and more stable.  In this paper, we show that preconditioning  the regression equation (Equation \ref{regeq}) can circumvent the irrepresentable condition.   For a design matrix $\X \in \R^{n \times p}$, we study a specific preconditioner $\pre  \in \R^{n \times n}$ that is defined from the singular value decomposition of $\X = UDV'$.  We call $\pre = U D^{-1} U'$ the Puffer Transformation because it inflates the smallest nonsingular values of the design matrix (Section \ref{geo} discusses this in more detail).  
This paper demonstrates why the matrix $\pre \X$ can satisfy the irrepresentable condition, while the matrix $\X$ may not;
in essence, the preconditioner   makes the columns of $\X$ less correlated.  
When $n\ge p$,  the columns of $\pre \X$ are exactly orthogonal, trivially satisfying the irrepresentable condition.  When $n<p$ and the columns of $\X$ are moderately or highly correlated,  $\pre $ can greatly reduce the the pairwise correlations between columns, making the design matrix $\X$ more amenable to the irrepresentable condition.
		
In a paper titled \textit{``Preconditioning" for feature selection and regression in high-dimensional problems}, the authors propose  projecting the outcome $Y$ onto the top singular vectors of $\X$ before running the Lasso \citep{paul2008preconditioning}.  They leave $\X$ unchanged.  
The current paper preconditions the entire regression equation, which reduces the impact of the top singular vectors of $\X$, and thus reduces the correlation between the columns of  $\X$.
Whereas the preprocessing step in \cite{paul2008preconditioning} performs noise reduction, the Puffer Transform makes the design matrix conform to the irrepresentable condition.

The outline of the paper is as follows:  Section \ref{prelim} gives the necessary mathematical notation and definitions.  Section \ref{preconditioning} introduces the Puffer Transformation and gives a geometrical interpretation of the transformation.      Section \ref{lowdim} discusses the low dimensional setting ($p\le n$), where the Puffer Transformation makes the columns of $\pre \X$ orthogonal.  Theorem \ref{thm:fixeddim} gives sufficient conditions for the sign consistency of the preconditioned Lasso when $p\le n$;  these sufficient conditions do not include an irrepresentable condition. 
Section \ref{highdim} discusses the high dimensional setting ($p>n$), where the Puffer Transformation projects the design matrix onto the Stiefel manifold.  Theorem \ref{unifTheorem} shows that most matrices on the Stiefel manifold satisfy the irrepresentable condition.  Theorem \ref{highdimthm} gives sufficient conditions for the sign consistency of the preconditioned Lasso in high dimensions.  This theorem includes an irrepresentable condition.   Section \ref{sim} shows promising simulations that compare the preconditioned Lasso to several other (un-preconditioned) methods.  Section \ref{related} describes four data analysis techniques that incidentally precondition the design matrix with a potentially harmful preconditioner; just as a good preconditioner can improve estimation performance, a bad preconditioner can severely detract from performance.   Users should be cautious when using the four techniques described in Section \ref{related}.  Section \ref{disc} concludes the paper.

%
%
%
%
%
%
%
%
%


\subsection{Preliminaries} \label{prelim}
To define the Lasso estimator, suppose the observed data  are independent pairs
$\{( x_i,\Y_i)\} \in \R^p \times  \R$ for $i = 1, 2,  \dots , n$
following the linear regression model
\begin{equation}
\Y_i = x_i^T\truebeta + \e_i , \label{linear}
\end{equation}
where $x_i^T$ is a row vector representing the predictors for the
$i$th observation, $Y_i$ is the corresponding $i$th response
variable, and $\e_i$'s are independent, mean zero noise terms with variance $\sigma^2$.  The unobserved coefficients are $\truebeta \in \R^p$. Use $\X \in \R^{n\times p}$ to denote the $n\times p$ design matrix with
$x_k^T=\left(\X_{k1},\ldots,\X_{kp}\right)$ as its $k$th row and
with $X_j=\left(\X_{j1},\ldots,\X_{jn}\right)^T$ as its $j$th
column, then
\[ \X= \left( \begin{array}{c}
x_1^T \\
x_2^T\\
\vdots \\
x_n^T
\end{array} \right)= \left(X_1,X_2,\ldots, X_p\right).\]
Let $Y=\left(Y_1,\ldots,Y_n\right)^T$ and
$\e=\left(\e_1,\e_2,\ldots, \e_n\right)^T\in \R^n$. 

For penalty function $pen(b) : \R^p \rightarrow \R$, define the penalized least squares objective function,
\begin{equation} \label{optprob}
\ell(b, pen, \lambda) = \frac{1}{2}\|Y - \X b\|_2^2 + pen(b, \lambda).
\end{equation}
The Lasso estimator uses the $\ell_1$ penalty,
\begin{equation} \label{lasso}
\hbet = \arg \min_b \frac{1}{2}\|Y - \X b\|_2^2 + \lambda \ \| b \|_1.
\end{equation}
%
%
where for some vector $b \in \R^p$, $\|b\|_r = (\sum_{i=1}^k
|x_i|^r)^{1/r}$.  
%

The popularity of the Lasso (and other sparse penalized least squares methods)  stems from the fact that, for large enough values of $\lam$, the estimated coefficient vectors contain several zeros.  If one is willing to assume the linear regression model, then the Lasso estimates which columns in $\X$  are conditionally independent of $Y$ given the other columns in $\X$.


\subsubsection{Sign consistency and the irrepresentable condition} \label{previousassumptions}


For $T \subset \{1, \dots, p\}$ with $|T| = t$,  define $\X(T) \in \R^{n \times t}$ to contain the columns of $\X$ indexed by $T$.  For any vector $x \in \R^p$, define $x(T) = (x_j)_{j \in T}$. $S \subset \{1, \dots, p\}$, the support of $\truebeta$,  is defined
\[S = \{j : \truebetaj \ne 0\}.\]
Define $s = |S|$.  In order to define sign consistency, define 
\[\textrm{sign}(x) =  \left\{\begin{array}{rl}1 & \textrm{ if } x>0 \\0 & \textrm{ if } x=0 \\-1 & \textrm{ if } x<0,\end{array}\right.\]
 and for a vector $b$, $\textrm{sign}(b)$ is defined as a vector with the $i$th element $\textrm{sign}(b_i)$.

\begin{definition} 
The Lasso is \textbf{sign consistent} if there exists a sequence
$\lambda_n$ such that,
\[P\left(\textrm{sign}(\hat\beta(\lambda_n)) =  \textrm{sign}(\truebeta )\right) \rightarrow 1,  \mbox{ as } n \rightarrow \infty.\]
\end{definition}
In other words,   $\hbet$ can asymptotically identify the relevant and irrelevant  variables when it is sign consistent. Several authors, including 
\cite{meinshausen2006high,zou2006adaptive, zhao2006model, yuan2007non}, have studied the sign consistency property and found a sufficient condition for sign consistency.  \cite{zhao2006model} called this assumption the  ``irrepresentable condition"  and showed that it is almost necessary for sign consistency.  
\begin{definition}
The design matrix $\X$ satisfies the \textbf{Irrepresentable condition} for $\truebeta$ if, for some constant $\eta \in (0,1]$,
\begin{equation}
\left\|\Xb^T\Xa\left(\Xa^T\Xa\right)^{-1}sign(\truebeta(S))\right\|_{\infty}\leq
1-\eta, \label{IC}
\end{equation}
where for a vector $x$, $\|x\|_{\infty}  = \max_i |x_i|$.
\end{definition}

In practice, this condition is difficult to check because it relies on the unknown set $S$.  Section 2 of \cite{zhao2006model} gives several sufficient conditions.  For example, their Corollary 2 shows that if $|\mbox{cor}(X_i, X_j)| \le c/(2s - 1)$ for a constant $0\le c <1$, then the irrepresentable condition holds.  Theorem \ref{unifTheorem} in Section \ref{highdim} of this paper relies on their corollary.

\section{Preconditioning to circumvent the stringent assumption} \label{preconditioning}

We will always assume that the design matrix $\X \in \R^{ n \times p}$ has rank $d = \min\{n,p\}$.  From singular value decomposition, there exist matrices $U \in \R^{n \times d}$ and $V\in \R^{p \times d}$  with $U^TU=V^TV = I_d$ and diagonal matrix $D \in \R^{d \times d}  $ such that $\X = UDV'$.  Define the \textbf{Puffer Transformation},
\begin{equation} \label{puffertransform}
\pre  = UD^{-1}U^T.
\end{equation}
The preconditioned design matrix $\pre \X$ has the same singular vectors as $\X$.  However, all of the nonzero singular values of $\pre \X$ are set to unity: $\pre \X = UV'$.  When $n\ge p$, the columns of $\pre \X$ are orthonormal.  When $n \le p$, the rows of $\pre \X$ are orthonormal. 


Define $\tilde Y = \pre Y, \ \tilde \X = \pre \X,$ and $\tilde \e = \pre \e$.   After left multiplying the regression equation $Y = \X \truebeta + \e$ by the matrix $\pre $, the transformed regression equation becomes
\begin{equation} \label{newregeq}
\tilde Y = \tilde X \truebeta + \tilde \e
\end{equation}
If $\e \sim N(0, \sigma^2 I_n)$, then $\tilde \e \sim N(0, \tilde \Sigma)$ where $\tilde \Sigma = \sigma^2 UD^{-2}U^T$. 

The scale of $\tilde \Sigma$ depends on the diagonal matrix $D$, which contains the $d$ singular values of $\X$.     As the singular values of $\X$ approach zero, the corresponding elements of $D^{-2}$ grow very quickly.  This increased noise can quickly overwhelm the benefits of a well conditioned design matrix.  For this reason, it might be necessary 
add a Tikhonov regularization term to the diagonal of $D$.  The simulations in Section \ref{sim} show that when $p \approx n$, the transformation can harm estimation.  Future research will examine if a Tikhonov regularizer resolves this issue. 

	In numerical linear algebra, the objective is speed, and there is a trade off between the time spent computing the preconditioner vs.\ solving the system of equations.  Better preconditioners make the original problem easier to solve.  However, these preconditioners themselves can be time consuming to compute.   Successful preconditioners balance these two costs to provide a computational advantage.  In our setting, the objective is inference, not speed per se, and the tradeoff is between a well behaved design matrix and a well behaved error term.  Preconditioning can aid statistical inference if it can balance these two constraints.

\subsection{Geometrical representation} \label{geo}

The figures in this section display the geometry of the Lasso before and after the Puffer Transformation.  These figures (a) demonstrate what happens when the irrepresentable condition is not satisfied, (b) reveal how the Puffer Transformation circumvents the irrepresentable condition, and (c) illustrate why we call $\pre$ the Puffer Transformation.

The figures in this section are derived from the following optimization problem which is equivalent to the Lasso.\footnote{In an abuse of notation, the left hand side of Equation \ref{constrainedlasso} is $\hat \beta(c)$.  In fact, there is a one-to-one function $\phi(c) = \lambda$ to make the Lagrangian form of the Lasso (Equation \ref{lasso}) equivalent to the constrained form of the Lasso (Equation \ref{constrainedlasso}). }
\begin{equation}\label{constrainedlasso}
\hat \beta(c) = \arg \min_{b: \|b\|_1  \leq c} \|Y - \X b\|_2^2
\end{equation}

Given the constraint set $\|b\|_1 \le c$ and a continuum of sets $\|Y - X b\|_2^2 \le x$ for $x\ge 0$, define 
\[\mathcal{I}(c,x) = \{b: \|b\|_1 \le c\} \cap \{b: \|Y - X b\|_2^2 \le x\}.\]
When $c$ is small enough, $\mathcal{I}(c,0)$ is an empty set, implying that there is no $b$ with $\|b\|_1 \le c$ such that $Y=Xb$.  To find $\hat \beta(c)$, increase the value of $x$ until $\mathcal{I}(c,x)$ is no longer an empty set.  Let $x^*$ be the smallest $x$ such that $\mathcal{I}(c,x)$ is nonempty.  Then, $\hat \beta(c) \in \mathcal{I}(c, x^*)$.  Under certain conditions on $X$ (e.g. full column rank), the solution is unique and $\hat \beta(c) = \mathcal{I}(c, x^*)$.  

Figures \ref{fig:geocordesign} and \ref{fig:geopreconditioned} below give a graphical representation of this description of the Lasso before and after preconditioning. 
The constraint set $\{b: \|b\|_1 \le c\}$ appears as a diamond shaped polyhedron and the level set of the loss function $\{b: \|Y - X b\|_2^2 < x\}$ appears as an ellipse.   Starting from $x = 0$, $x$ increases, dilating the ellipse, until the ellipse intersects the constraint set.  The first point of intersection represents the solution to the Lasso.  This point $\hat \beta(c) \in \R^p$ is the element of the constraint set which minimizes $\|Y - X b\|_2^2$. 

In both Figure \ref{fig:geocordesign} and Figure \ref{fig:geopreconditioned}, the rows of $X$ are independent Gaussian vectors with mean zero and covariance matrix
\[\Sigma = \left(\begin{array}{ccc}1 & 0 & .6 \\0 & 1 & .6 \\ .6 & .6 & 1\end{array}\right)\]
To highlight the effects of preconditioning, the noise is very small and $n = 10,000$.  In Figure \ref{fig:geocordesign}, the design matrix is not preconditioned.  In Figure  \ref{fig:geopreconditioned}, the problem has been preconditioned, and the ellipse represents the set $\|\pre Y - \pre X b\|_2^2 \le x$ for some value of $x$.  The preconditioning turns the ellipse into a sphere.

\begin{figure}
\centering
\mbox{\subfigure{\includegraphics[width=2.5in]{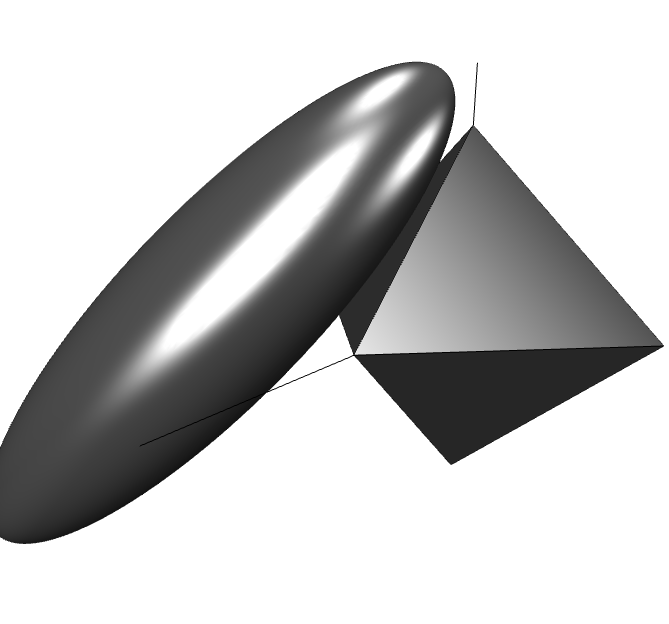}}\quad
\subfigure{\includegraphics[width=2.5in]{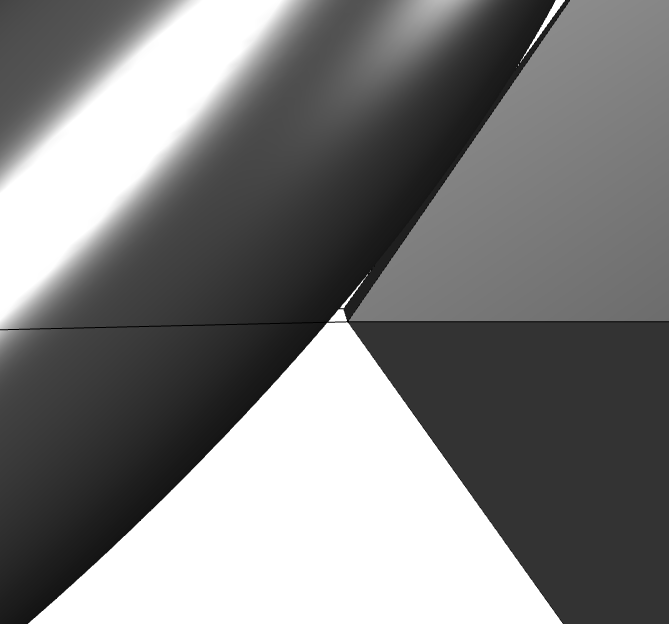} }}
\caption{When the problem is not preconditioned, the elongated ellipse (representing the $\ell_2$ loss) intersects the $\ell_1$ ball off of the plane created from the axes that point to the left.  The Lasso fails to select the true model.} 
\label{fig:geocordesign}
\end{figure}


\begin{figure}
\centering
\mbox{\subfigure{\includegraphics[width=2.5in]{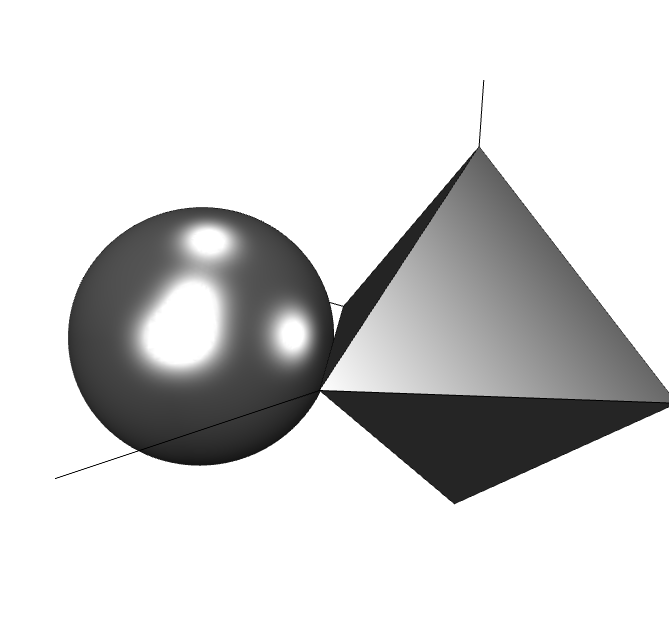}}\quad
\subfigure{\includegraphics[width=2.5in]{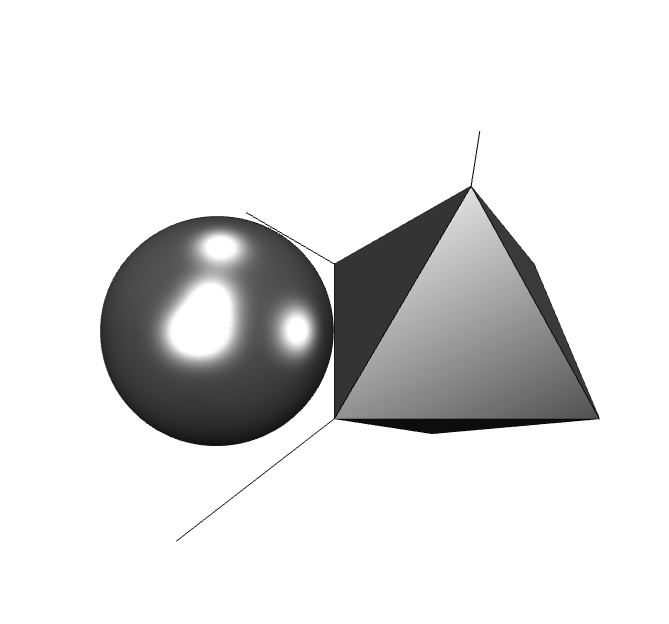} }}
\caption{After preconditioning, the $\ell_2$ loss ellipse turns to a sphere.  This ensures that it intersects the polyhedron in the plane. After preconditioning, the Lasso correctly selects the true model. } \label{fig:geopreconditioned}
\end{figure}


%

%

In both Figure \ref{fig:geocordesign} and Figure \ref{fig:geopreconditioned}, $\truebeta = (1,1,0)$.  In both figures, the third dimension is represented by the axis that points up and down.  Thus, if the ellipse intersects the constraint set in the (horizontal) plane formed by the first two dimensions, then the Lasso estimates the correct sign.  In Figure \ref{fig:geocordesign}, the design matrix fails the irrepresentable condition and  the elongated shape of the ellipse forces $\hat \beta(c)$ off of the true plane.  This is illustrated in the right panel of Figure \ref{fig:geocordesign}.  High dimensional regression techniques that utilize concave penalties avoid this geometrical misfortune by changing the shape of the constraint set.  Where the $\ell_1$ ball has a flat surface, the non-convex balls flex inward, dodging any unfortunate protrusions of the ellipse.  As preconditioning acts on the opposite set, it restricts the protrusions in the ellipse.  

In Figure \ref{fig:geopreconditioned}, the elongated direction of the ellipse shrinks down, and the ellipse is puffed out into a sphere; it then satisfies the irrepresentable condition, and $\hbet$ lies in the true plane.   Therefore, in this example, preconditioning circumvents the stringent condition for sign consistency.  When $n>p$ preconditioning makes the ellipse a sphere.  When $p>n$,  preconditioning can drastically reduce the pairwise correlations between columns, thus making low dimensional projections of the ellipse more spherical.   Both Figure \ref{fig:geocordesign} and Figure \ref{fig:geopreconditioned} were drawn with the R library rgl.

%
%
%
%
%
%
%
%
%
%

Figure \ref{fig:geopreconditioned} illustrates why the Puffer Transformation is so named.  We call $\pre$ the Puffer Transformation in reference to the pufferfish.  In its relaxed state, a pufferfish looks similar to most other fish, oval and oblong.  However, when a pufferfish senses danger, it defends itself by inflating its body into a sphere.  In regression, if the columns of the design matrix are correlated, then the contours of the loss function $\ell_2(b) = \|Y - \X b\|^2$ are oval and oblong as illustrated in Figure \ref{fig:geocordesign}.  
If a data analyst has reason to believe that the design matrix might not satisfy the irrepresentable condition, then she could employ the Puffer Transformation to ``inflate" the smallest singular values, making the contours of $\ell_{2, \pre} (b) = \|\pre Y -\pre \X b\|^2$ spherical.  
Although these contours are not faithful to the covariance structure of the standard OLS estimator, the spherical contours are more suited to the geometry of the Lasso in many settings.

\section{Low dimensional Results} \label{lowdim}
This section demonstrates that for $n \ge p$, after the Puffer Transformation, the Lasso is sign consistent with a minimal assumption on the design matrix $\X$.  
When $n\ge p$, the preconditioned design matrix is orthonormal.
\[(\pre  \X)^T \pre \X = VDU^T U D^{-1} U^T U D^{-1} U^T UDV^T = I\] 
The irrepresentable condition makes a requirement on 
\[\Xb' \Xa\left(\Xa'\Xa\right)^{-1}\]
Since the Puffer Transformation makes the columns of the design matrix orthonormal, $(\pre \Xb) \pre \Xa = 0$,  
satisfying  irrepresentable condition.

\begin{theorem}
\label{thm:fixeddim}
Suppose that data $(\X,\Y)$ follows the linear model described in Equation
(\ref{regeq}) with  iid Gaussian noise $\e \sim N(0, \sigma^2I_n)$. Define the singular value decomposition of $X$ as $\X = UDV'$.  Suppose that $n\geq p$ and $\X$ has rank $p$. We further assume that $\Lambda_{\min}(\frac{1}{n}X'X) \geq \tilde C_{\min}>0$. Define the \textbf{Puffer Transformation	},
$\pre  = UD^{-1}U^T.$ Let $\tilde \X = F \X$ and $\tilde Y = F Y$. Define

\[\tilde \beta(\lambda) = \arg\min_b \frac{1}{2} \|\tilde Y - \tilde \X b\|_2^2 + \lambda  \| b\|_1.\]

If $\min_{j\in S}|\truebeta_j| \geq 2  \lambda$, 
then with probability greater than
$$1-2p \exp\left\{-\frac{n\lam^2\tilde C_{\min}}{2\sigma^2}\right\} $$
 $\tilde \beta(\lambda) =_s \truebeta$.
\end{theorem}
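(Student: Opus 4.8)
The plan is to reduce to the standard primal-dual witness / KKT analysis for the Lasso applied to the transformed data $(\tilde \X, \tilde Y)$, and then to exploit the fact that $\tilde \X = UV'$ has orthonormal columns so that the irrepresentable-condition term vanishes identically. First I would write down the KKT conditions for $\tilde \beta(\lambda)$: a vector $b$ with support contained in $S$ and $\textrm{sign}(b(S)) = \textrm{sign}(\truebeta(S))$ is the (unique) Lasso solution if and only if $\tilde \X(S)'(\tilde Y - \tilde \X(S) b(S)) = \lambda \, \textrm{sign}(\truebeta(S))$ and $\|\tilde \X(S^c)'(\tilde Y - \tilde \X(S) b(S))\|_\infty < \lambda$. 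Because $\tilde \X(S)'\tilde \X(S) = I_s$ and $\tilde \X(S^c)'\tilde \X(S) = 0$ (orthonormal columns of $\pre\X$, as already noted in the text preceding the theorem), the candidate solution is explicit: $\hat b(S) = \tilde \X(S)'\tilde Y - \lambda\,\textrm{sign}(\truebeta(S))$ and $\hat b(S^c) = 0$, and the strict dual feasibility condition collapses to $\|\tilde \X(S^c)'\tilde \e\|_\infty < \lambda$, while the sign condition becomes $\textrm{sign}\big(\truebeta(S) + \tilde \X(S)'\tilde\e - \lambda\,\textrm{sign}(\truebeta(S))\big) = \textrm{sign}(\truebeta(S))$.

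Next I would control the two relevant random quantities. Note $\tilde \X(S)'\tilde\e = (UV'(S))'\pre\e$ and $\tilde \X(S^c)'\tilde\e = (UV'(S^c))'\pre\e$; since $\pre = UD^{-1}U'$ and $\e\sim N(0,\sigma^2 I_n)$, each coordinate of these vectors is a mean-zero Gaussian. The key variance computation is that the $j$th column of $\pre\X = UV'$ has unit norm, so $\textrm{Var}\big((\pre\X_j)'\pre\e\big) = \sigma^2 (\pre\X_j)' \pre\pre' (\pre\X_j)$; here I must bound $\pre\pre' = UD^{-2}U'$, whose operator norm is $1/\Lambda_{\min}(\X'\X) = 1/(n\,\text{(empirical min eigenvalue)}) \le 1/(n\tilde C_{\min})$ by the assumption $\Lambda_{\min}(\frac1n \X'\X)\ge \tilde C_{\min}$. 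Hence every coordinate of both $\tilde\X(S)'\tilde\e$ and $\tilde\X(S^c)'\tilde\e$ is $N(0,v^2)$ with $v^2 \le \sigma^2/(n\tilde C_{\min})$. A standard Gaussian tail bound gives $P(|Z|\ge \lambda) \le 2\exp(-\lambda^2/(2v^2)) \le 2\exp(-n\lambda^2\tilde C_{\min}/(2\sigma^2))$ for each such coordinate.

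Then I would assemble the union bound. On the event $\{\|\tilde\X(S^c)'\tilde\e\|_\infty < \lambda\} \cap \{\|\tilde\X(S)'\tilde\e\|_\infty < \lambda\}$, strict dual feasibility holds, and for $j\in S$ we have $|\truebeta_j + (\tilde\X(S)'\tilde\e)_j - \lambda\,\textrm{sign}(\truebeta_j)| $ with the right sign provided $|\truebeta_j| > |(\tilde\X(S)'\tilde\e)_j| + \lambda$, which is implied by $|\truebeta_j|\ge 2\lambda$ together with $|(\tilde\X(S)'\tilde\e)_j| < \lambda$; thus the beta-min condition $\min_{j\in S}|\truebeta_j|\ge 2\lambda$ closes this step. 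There are $s$ coordinates in $\tilde\X(S)'\tilde\e$ and $p-s$ in $\tilde\X(S^c)'\tilde\e$, for $p$ coordinates total, so the union bound gives failure probability at most $2p\exp(-n\lambda^2\tilde C_{\min}/(2\sigma^2))$, matching the stated bound. Finally, uniqueness of the Lasso solution (so that the constructed witness is genuinely $\tilde\beta(\lambda)$) follows from $\tilde\X(S)'\tilde\X(S) = I_s$ being invertible together with the strict inequality in dual feasibility, a standard consequence.

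The main obstacle — really the only nontrivial point — is the variance bound in the middle step: one must be careful that $\pre$ appears twice (once inside $\tilde\X = \pre\X$ and once inside $\tilde\e = \pre\e$), so the relevant quadratic form is $(\pre\X_j)'\pre\pre'(\pre\X_j)$ rather than something that simplifies to $1$, and it is precisely the eigenvalue hypothesis $\Lambda_{\min}(\frac1n\X'\X)\ge\tilde C_{\min}$ that is needed to control $\|\pre\pre'\|_{op}$. Everything else is the orthogonality of $\pre\X$'s columns (already established) plus routine Gaussian tail and union-bound arguments.
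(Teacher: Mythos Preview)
Your proposal is correct and follows essentially the same approach as the paper: both use the KKT/primal--dual witness conditions (the paper's Lemma~\ref{KKT}), exploit $\tilde\X'\tilde\X = I_p$ so that the irrepresentable term vanishes with $\eta=1$, bound each Gaussian coordinate's variance by $\sigma^2\Lambda_{\max}(\pre\pre') = \sigma^2/\min_i D_{ii}^2 \le \sigma^2/(n\tilde C_{\min})$, and finish with a union bound over all $p$ coordinates. The only difference is organizational: the paper first states a general lemma (Lemma~\ref{thm:consis}) for correlated-noise Lasso under the irrepresentable condition and then specializes it, whereas you work directly in the orthonormal case.
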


\textbf{Remarks.} Suppose that $\tilde C_{\min}>0$ is a constant. If  $p, \min_{j\in S}|\truebeta_j|$ and $\sigma^2$ do not change with $n$, then choosing $\lambda$ such that (1) $\lambda  \rightarrow 0$ and (2) $\lambda^2 n\rightarrow \infty$ ensures that $\tilde \beta (\lambda)$ is sign consistent.  One possible choice is $\lambda = \sqrt{\frac{\log n}{n}}$.

 From classical linear regression, we know that increasing the correlation between columns of $\X$ increases the variance of the standard OLS estimator;  correlated predictors make estimation more difficult.  This intuition translates to preconditioning with the Lasso; increasing the correlation between the columns of $\X$ decreases the smallest singular value of $\X$,   increasing the variance of the noise terms $cov(\pre  \e) = U D^{-2} U'$, and  weakening the bound in Theorem \ref{thm:fixeddim}.  When the columns of $\X$ are correlated, then $\tilde C_{\min}$ is small and Theorem \ref{thm:fixeddim} gives a smaller probability of sign consistency.

Theorem \ref{thm:fixeddim} applies to many more sparse regression methods that use penalized least squares.  After preconditioning, the design matrix is orthogonal and several convenient facts follow.  First, if the penalty decomposes,
\[pen(b,\lam) = \sum_{j=1}^p pen_j(b_j, \lam)\]
so that $pen_j$ does not rely on $b_k$ for $k \ne j$, then the penalized least squares method admits a closed form solution.  If it is also true that all the $pen_j$'s are equivalent and have a cusp at zero (like the Lasso penalty), then the method selects the same sequence of models as the Lasso and correlation screening (i.e. select $X_j$ if $|cor(Y,X_j)| \ge \lambda$) \citep{fan2008sure}.  For example, both SCAD and MCP satisfy these conditions.  If  a method selects the same models as the Lasso, and the Lasso is sign consistent, then this method is also sign consistent. Thus, Theorem \ref{thm:fixeddim} implies that (1) preconditioned correlation screening, (2) preconditioned SCAD, and (3)  preconditioned  MCP are sign consistent under some mild conditions (similar to the conditions in Theorem \ref{thm:fixeddim}).  However, in high dimensions, $\pre X$ is no longer orthogonal.  So, the various methods could potentially estimate different models. 

\section{High dimensional Results}\label{highdim}

The results for $p>n$ are not as straightforward because the columns of the design matrix cannot be orthogonal.  However, the results in this section suggest that for many high dimensional design matrices $\X$, the matrix $\pre \X$ satisfies the stringent assumptions of the Lasso.  In fact, the simulation results in the following section suggest that preconditioning offers dramatic improvements in high dimensions.

Before introducing Theorem \ref{unifTheorem}, Figure \ref{correduce} presents an illustrative numerical simulation to prime our intuition on preconditioning in high dimensions.   In this simulation, $n = 200, p = 10,000$, and each row of $\X$ is an independent Gaussian vector with mean zero and covariance matrix $\Sigma$.  The diagonal of $\Sigma$ is all ones and the off diagonal elements of $\Sigma$ are all $.9$.   The histogram in Figure \ref{correduce} includes both the distribution of pairwise correlations between the columns of $\X$ and the distribution of pairwise correlations between the columns of $\pre \X$.   Before the transformation, the pairwise correlations between the columns of $\X$ have an average of $.90$ with a standard deviation of $.01$.  After the transformation, the average correlation is $.005$, and the standard deviation is $.07$.   Figure \ref{correduce} shows this massive reduction in correlation. The histogram has two modes; the left mode corresponds to the distribution of pairwise correlations in $\pre \X$, and the right mode corresponds to the distribution of correlations in $\X$.

\begin{figure}[htbp]
\begin{center}
\includegraphics[width = 5in]{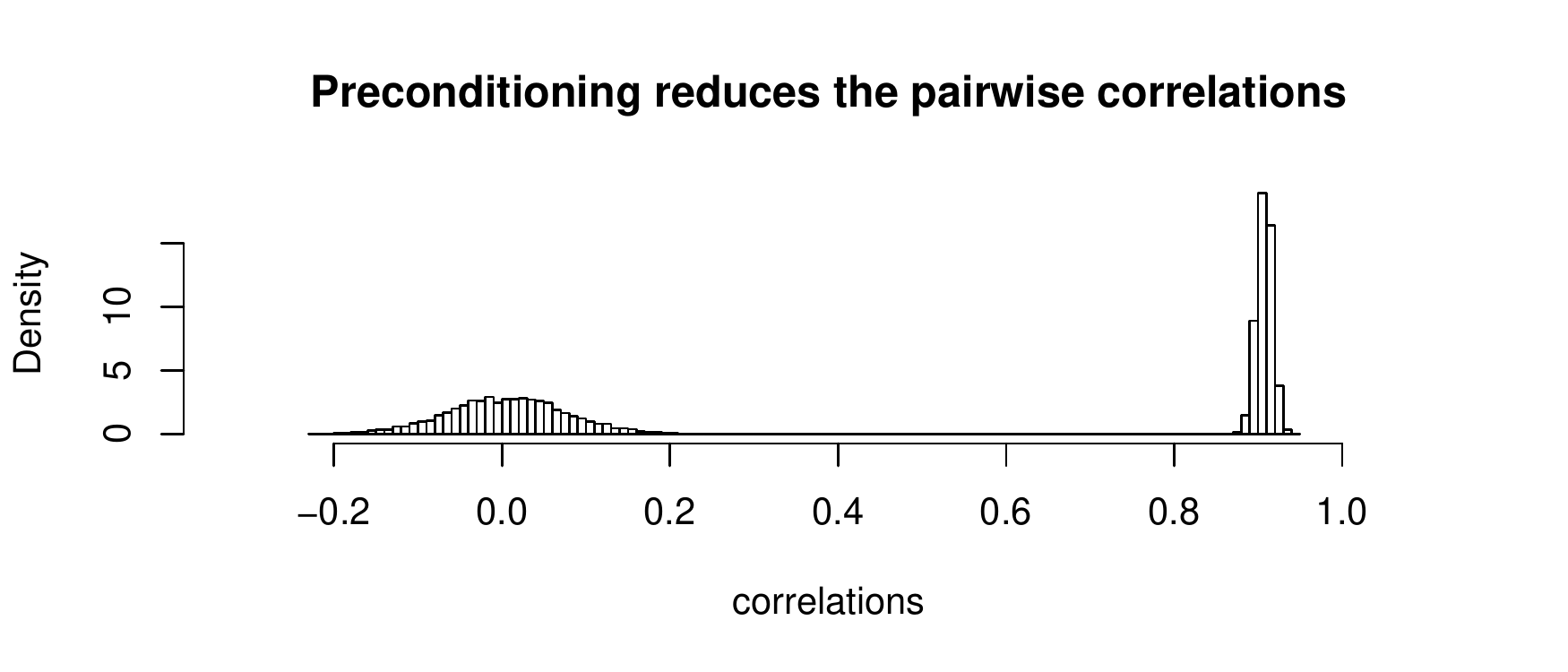}
\caption{This figure displays the pairwise correlations between the columns of $\pre \X$ (on the left) and the pairwise correlations between the columns of $\X$ (on the far right).  In this simulation, the rows of $\X$ are iid multivariate Gaussians with mean zero and covariance matrix $\Sigma$,  $\Sigma_{ii} = 1$ and $\Sigma_{i \ne j} = .9$.  The figure was created by first sampling 10,000 $(i,j)$ pairs without replacement such that $i \ne j$ and then computing $\mbox{cor}(\pre X_i, \pre X_j)$ and $\mbox{cor}(X_i, X_j)$ for each of these 10,000 pairs. Before preconditioning, the pairwise correlations are much larger.  In this setting, preconditioning reduces the pairwise correlations.
}
\label{correduce}
\end{center}
\end{figure}

By reducing the pairwise correlations, preconditioning helps the design matrix satisfy the irrepresentable condition.  For example, if the first twenty elements of $\truebeta$ are positive and the other 9,980 elements equal zero, 
%
then the Puffer Transformation makes the design matrix satisfy the irrepresentable condition in this simulation.  Recall that the irrepresentable condition bounds the quantity 
\begin{equation} \label{IC2}
\left\|\Xb^T\Xa\left(\Xa^T\Xa\right)^{-1}\overrightarrow{b}\right\|_{\infty} < 1 - \eta
\end{equation}
for $\eta > 0$.  From the simulated data displayed in Figure \ref{correduce}, the left hand side of Equation (\ref{IC2}) evaluated with the matrix $\X$ equals $1.09$; evaluated with $\pre \X$, it equals $.84$.   In this example, the Puffer Transformation circumvents the irrepresentable condition.

\subsection{Uniform distribution on the Stiefel manifold}

When $n\ge p$, the columns of $\pre \X$ are orthogonal.  When $p \ge n$, the \textit{rows} are orthogonal.  $\pre \X$ lies in the Stiefel manifold,
\[\pre \X \in V(n,p) = \{ V \in \R^{n \times p} : VV' = I_n\}.\]
Further, under any unitarily invariant norm,  $\pre \X$ is the projection of $\X$ onto $V(n,p)$  \citep{fan1955some}.  Since $V(n,p)$ is a bounded set, it has a uniform distribution.
\begin{definition}[\cite{chikuse2003statistics}]
A random matrix $V$ is uniformly distributed on $V(n,p)$, written $V \sim uniform(V(n,p))$, if  the distribution of $V$ is equal to the distribution of $V O$ for any fixed $O$ in the orthogonal group of matrices $O(p,\R)$, where
\[O(p,\R) = \{O \in \R^{p \times p} : OO' = I_p\}.\] 
\end{definition}
In this section, Theorem \ref{unifTheorem} shows that if $V$ is uniformly distributed on $V(n,p)$, then after normalizing the columns of $V$ to have equal length, the matrix satisfies the irrepresentable condition with high probability in certain regimes of $n, p$, and $s$.  Propositions \ref{unifGaussian} and \ref{unifCor} give two examples of random design matrices $\X$ where $\pre \X$ is uniformly distributed on $V(n,p)$.

\begin{theorem} \label{unifTheorem}
Suppose $X \in \R^{n\times p}$ is distributed  $uniform(V(n, p))$ and $n > c s^4$, where $s$ is the number of relevant predictors and $c$ is some constant, then asymptotically, the irrepresentable condition holds for normalized version of $X$  with probability no less than
\[1 - 4p^2 \exp(-n^{1/2}),\]
where the normalized version of $X$, denoted $\tilde X$ is defined as
$\tilde X_{ij} = X_{ij} / \sqrt{\sum_{i=1}^n X_{ij}^2}.$
\end{theorem}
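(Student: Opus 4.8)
The plan is to deduce the irrepresentable condition from Corollary~2 of \cite{zhao2006model}, which (as recalled in Section~\ref{previousassumptions}) certifies the condition for $\truebeta$ as soon as every pairwise correlation of the design columns is at most $c_0/(2s-1)$ for some fixed $c_0\in(0,1)$. Correlation is scale invariant, so $\mathrm{cor}(\tilde X_i,\tilde X_j)=\mathrm{cor}(X_i,X_j)$, and taking the constant $c$ in the hypothesis $n>cs^{4}$ large enough (e.g.\ $c\ge (2/c_0)^4$) forces $n^{-1/4}\le c_0/(2s-1)$. Hence it suffices to prove
\[
\Prob{\max_{i\neq j}\bigl|\mathrm{cor}(\tilde X_i,\tilde X_j)\bigr|>n^{-1/4}}\le 4p^{2}\exp\bigl(-n^{1/2}\bigr),
\]
because on the complementary event the corollary applies to the normalized matrix $\tilde X$. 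By a union bound over the at most $p^{2}$ pairs, everything reduces to the single-pair tail bound $\Prob{\bigl|\mathrm{cor}(\tilde X_i,\tilde X_j)\bigr|>n^{-1/4}}\le 4\exp(-n^{1/2})$.

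For the single pair I would use the Gaussian representation of the uniform law on $V(n,p)$: if $G\in\R^{n\times p}$ has i.i.d.\ $N(0,1)$ entries then $(GG')^{-1/2}G\sim uniform(V(n,p))$, so we may take $X=(GG')^{-1/2}G$ with columns $X_k=(GG')^{-1/2}G_k$. Fix $i\neq j$, set $B=[G_i,G_j]\in\R^{n\times2}$, and split $GG'=H+BB'$ where $H=\sum_{k\neq i,j}G_kG_k'$ is a Wishart matrix with $p-2$ degrees of freedom, independent of $B$. The $2\times2$ Gram matrix of the two columns is $P:=B'(GG')^{-1}B$, and the Sherman--Morrison--Woodbury identity gives $P=I_2-(I_2+Q)^{-1}$ with $Q=B'H^{-1}B\succeq0$. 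Solving the $2\times2$ algebra, $P_{12}=Q_{12}/\det(I_2+Q)$ and $P_{\ell\ell}=(Q_{\ell\ell}+Q_{11}Q_{22}-Q_{12}^{2})/\det(I_2+Q)$; since $Q\succeq0$ makes $Q_{11}Q_{22}-Q_{12}^{2}\ge0$, the (uncentered) correlation of the two columns obeys
\[
\bigl|\mathrm{cor}(\tilde X_i,\tilde X_j)\bigr|=\frac{|P_{12}|}{\sqrt{P_{11}P_{22}}}\le\frac{|Q_{12}|}{\sqrt{Q_{11}Q_{22}}},
\]
up to an $O(1/n)$ centering correction, negligible at the scale $n^{-1/4}$. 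This crucial step replaces the genuinely dependent columns of $X$ by a ratio built from the \emph{independent} Wishart block $H$ and two independent standard Gaussian vectors.

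To bound $|Q_{12}|/\sqrt{Q_{11}Q_{22}}$ with $Q_{11}=G_i'H^{-1}G_i$, $Q_{22}=G_j'H^{-1}G_j$, $Q_{12}=G_i'H^{-1}G_j$: conditioning on $H$ and $G_i$, the quantity $Q_{12}$ is a centered Gaussian with variance $G_i'H^{-2}G_i$, while the Hanson--Wright inequality concentrates $Q_{11}$ and $G_i'H^{-2}G_i$ around $\mathrm{tr}(H^{-1})$ and $\mathrm{tr}(H^{-2})$, and $Q_{22}$ around $\mathrm{tr}(H^{-1})$. Consequently $|Q_{12}|/\sqrt{Q_{11}Q_{22}}$ is, conditionally, sub-Gaussian with variance proxy of order $\mathrm{tr}(H^{-2})/\mathrm{tr}(H^{-1})^{2}$, which by Cauchy--Schwarz is at least $1/n$ and, by Bai--Yin-type control of the extreme eigenvalues of $H$, is $O(1/n)$ in the regime $p\gtrsim n$ relevant here (this is the one place where the aspect ratio $n/p$ enters). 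A standard Gaussian tail bound then gives $\Prob{\bigl|\mathrm{cor}(\tilde X_i,\tilde X_j)\bigr|>n^{-1/4}}\le 4\exp(-n^{1/2})$ once the lower-order remainder terms are absorbed; this absorption is what ``asymptotically'' refers to in the statement. A union bound and an appeal to Corollary~2 of \cite{zhao2006model} then finish the proof.

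I expect the main obstacle to be precisely the dependence among the columns of a uniform Stiefel matrix: they share the factor $(GG')^{-1/2}$, so $\tilde X_i$ and $\tilde X_j$ are not independent uniform points on the sphere and their correlation tail cannot simply be read off from the classical $(1-t^{2})^{(n-3)/2}$ density. The Woodbury split that isolates the independent block $H$, combined with quantitative control of $\mathrm{tr}(H^{-1})$, $\mathrm{tr}(H^{-2})$, and the extreme eigenvalues of $H$, is what makes this tractable. A secondary issue is calibrating constants so the per-pair failure probability is exactly $\exp(-n^{1/2})$: the natural fluctuation scale of $\mathrm{cor}(\tilde X_i,\tilde X_j)$ is $n^{-1/2}$ with a Gaussian-type tail $\exp(-\Theta(n))$, and it is the deliberate choice of the coarser threshold $n^{-1/4}$ that downgrades this to $\exp(-\Theta(n^{1/2}))$ --- which, through the requirement $n^{-1/4}\le c_0/(2s-1)$, is exactly the source of the hypothesis $n>cs^{4}$ rather than a weaker $n\gtrsim s^{2}$.
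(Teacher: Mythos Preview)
Your overall reduction --- bound all pairwise column correlations by a constant times $n^{-1/4}$ and then invoke Corollary~2 of \cite{zhao2006model} --- is exactly the paper's strategy, and your observation that this is precisely what forces $n>cs^{4}$ rather than $n\gtrsim s^{2}$ is on the mark. The difference lies entirely in how the single-pair correlation tail is obtained.

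The paper does not use the Gaussian representation plus Woodbury. Instead it exploits an exact distributional fact: if $V\sim\mathrm{uniform}(V(n,p))$ and $A=[V_j,V_k]'[V_j,V_k]\in\R^{2\times2}$, then $A$ follows the matrix Beta law $B_2(n/2,(p-n)/2)$ (Khatri, Mitra). A lemma of Mitra then gives $a'Aa/a'a\sim\mathrm{Beta}(n/2,(p-n)/2)$ for every fixed nonzero $a$, so $A_{11}$, $A_{22}$, and $\tfrac12(A_{11}+A_{22})+A_{12}$ are each univariate $\mathrm{Beta}(n/2,(p-n)/2)$. Writing a Beta as $\chi^{2}/(\chi^{2}+\chi^{2})$ and applying the Laurent--Massart $\chi^{2}$ tail at deviation level $x=\sqrt{n}$ concentrates each of these within $O(n^{3/4}/p)$ of $n/p$ with failure probability $2\exp(-n^{1/2})$. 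Three such bounds control $|A_{12}|$, two more bound $\|V_j\|_2^{2}$ and $\|V_k\|_2^{2}$ from below, and the resulting ratio is $\le c_2 n^{-1/4}$ with the stated $8\exp(-n^{1/2})$ per pair; a union bound finishes.

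Your route is sound in outline and arguably more portable (it does not rely on classical matrix-variate distribution theory), but it is heavier: you must control the spectrum of the $(p-2)$-degree Wishart $H$, argue that $|Q_{12}|/\sqrt{Q_{11}Q_{22}}$ inherits a sub-Gaussian tail only after first pinning the random denominator away from zero, and track Hanson--Wright and Bai--Yin constants carefully enough to land on the exponent $n^{1/2}$ (rather than merely $c\,n^{1/2}$). The paper's approach sidesteps all of this because the exact Beta marginals reduce everything to a single elementary $\chi^{2}$ tail. One minor correction: the ``$O(1/n)$ centering correction'' you mention is unnecessary --- after column normalization the relevant quantity for Zhao--Yu's corollary is the cosine $V_j'V_k/(\|V_j\|_2\|V_k\|_2)=P_{12}/\sqrt{P_{11}P_{22}}$ itself, not a Pearson correlation.
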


The proof of Theorem \ref{unifTheorem} relies on the fact that if $|\mbox{cor}(X_i, X_j)| \le c/(2s - 1)$ for all pairs $(i,j)$, then $\X$ satisfies the irrepresentable condition.  This line of argument requires that $n > c s^4$.  A similar argument applied to a design matrix $\X$ with iid $N(0,1)$ entries only requires that $n > c s^2$ (this is included in Theorem \ref{thm:ICG} in the appendix Section \ref{subsec:Smanifold}).  If both of these are tight bounds, then it suggests that the irrepresentable condition for $V \sim uniform(V(n,p))$ is potentially more sensitive to the size of $s$ than a design matrix with iid $N(0,1)$ entries.  The final simulation in Section \ref{sim} suggests that this difference is potentially an artifact of our proof.  Both distributions are almost equally sensitive to $s$.  If anything, $uniform(V(n,p))$ has a slight advantage in our simulation settings. 


Propositions \ref{unifGaussian} and \ref{unifCor} give two models for $\X$ that makes  $\pre \X $ uniformly distributed  on the Stiefel manifold.  

\begin{prop} \label{unifGaussian}
If the elements of $\X$ are independent $N(0,1)$ random variables, then $\pre \X \sim uniform(V(n,p))$.
\end{prop}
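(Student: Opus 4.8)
The plan is to combine the orthogonal invariance of the i.i.d.\ Gaussian ensemble with the observation that the Puffer map commutes with right-multiplication by orthogonal matrices. Throughout I work in the regime relevant to $V(n,p)$, namely $p \ge n$, so that $d = \min\{n,p\} = n$, $\X$ has rank $n$ almost surely, and $\X\X'$ is positive definite a.s.

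First I would rewrite the Puffer transform intrinsically. Writing $\X = UDV'$ as in the text, one has $\X\X' = UD^2U'$, hence $\pre = UD^{-1}U' = (\X\X')^{-1/2}$, the symmetric positive-definite inverse square root of $\X\X'$. Consequently $\pre\X = (\X\X')^{-1/2}\X$ is a genuine (measurable) function of $\X$ alone. This is the point that needs care, since the SVD itself is unique only up to signs of the singular vectors (and up to orthogonal blocks when singular values coincide), but the formula $(\X\X')^{-1/2}\X$ removes that ambiguity on the probability-one event that $\X$ has rank $n$. Moreover $(\pre\X)(\pre\X)' = (\X\X')^{-1/2}(\X\X')(\X\X')^{-1/2} = I_n$, so $\pre\X$ takes values in $V(n,p)$.

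Next I would prove the invariance. For any fixed $O \in O(p,\R)$, the rows of $\X O$ are again i.i.d.\ $N(0,I_p)$, because the standard Gaussian law on $\R^p$ is rotation invariant; hence $\X O \stackrel{d}{=} \X$. On the other hand $(\X O)(\X O)' = \X O O' \X' = \X\X'$, so the Puffer transform built from $\X O$ is the very same matrix $\pre = (\X\X')^{-1/2}$, and therefore the preconditioned design built from $\X O$ equals $\pre(\X O) = (\pre\X)O$. Combining the two displays gives $(\pre\X)O \stackrel{d}{=} \pre\X$ for every fixed $O \in O(p,\R)$, which is exactly the defining property of $uniform(V(n,p))$ recalled above; this completes the proof.

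No estimates are involved; the only genuinely delicate step is the well-definedness of $\pre\X$ as a function of $\X$, and the identity $\pre\X = (\X\X')^{-1/2}\X$ disposes of it cleanly on the full-rank event. (Alternatively one could note directly that the map $\X \mapsto \pre\X = UV'$ is invariant under the sign and block ambiguities of the SVD, which would avoid mentioning the square-root formula at all.)
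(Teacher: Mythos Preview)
Your proof is correct and follows essentially the same approach as the paper: both arguments show that right-multiplying $\X$ by a fixed $O\in O(p,\R)$ leaves the Puffer matrix $\pre$ unchanged, and then combine this with the rotational invariance $\X O \stackrel{d}{=} \X$ to conclude $(\pre\X)O \stackrel{d}{=} \pre\X$. The only cosmetic difference is that the paper establishes the invariance of $\pre$ by noting that $\X O = U D(V'O)$ is already an SVD, whereas you obtain it from the intrinsic formula $\pre = (\X\X')^{-1/2}$; your route has the minor advantage of making well-definedness transparent despite the sign/block ambiguities of the SVD.
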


\begin{proof}
Define $U, V, D$ by the SVD of $\X$, $\X = UDV'$.  For a fixed $O \in O(p,\R)$, $\tilde V' = V' O$ is an element of $V(n,p)$.  Therefore, the SVD of $\X O = UD\tilde V'$. This implies that both $\X$ and $\X O$ have the same Puffer Transformation $\pre = UD^{-1}U'$.  This yields the result when combined with the fact that $\X \stackrel{d}{=} \X O$, 
\[\pre \X \stackrel{d}{=} \pre \X O. \]
\end{proof}

\begin{prop} \label{unifCor}
Suppose that $U_\Sigma \in \R^{p \times p}$ is drawn uniformly from the orthonormal group of matrices $O(p,\R)$
and $D_\Sigma \in \R^{p \times p}$ is a diagonal matrix with positive entries (either fixed or random).  Define $\Sigma = U_\Sigma D_\Sigma U_\Sigma^T$ 
and suppose the rows of $\X$ are drawn independently from $N(0, \Sigma)$,  then $\pre \X  \sim uniform(V(n,p))$. 
\end{prop}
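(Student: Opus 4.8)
The plan is to peel the outer orthogonal factor off $\Sigma^{1/2}$ and reduce to the same invariance arguments already used for Proposition~\ref{unifGaussian}, combining the rotational invariance of the i.i.d.\ standard Gaussian with the invariance of Haar measure on $O(p,\R)$.

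First I would condition on $D_\Sigma$ and treat it as a fixed positive diagonal matrix (this is harmless under the natural reading that $D_\Sigma$ is independent of $U_\Sigma$), and realize the design matrix as $\X = Z\,\Sigma^{1/2}$, where $\Sigma^{1/2} = U_\Sigma D_\Sigma^{1/2} U_\Sigma^T$ is the symmetric square root and $Z \in \R^{n\times p}$ has i.i.d.\ $N(0,1)$ entries and is independent of $U_\Sigma$. Since $D_\Sigma$ is positive, $\Sigma^{1/2}$ is invertible and $\X$ has rank $n$ almost surely, so the Puffer Transformation is well defined. Writing $\X = \big(Z U_\Sigma D_\Sigma^{1/2}\big)U_\Sigma^T =: G\,U_\Sigma^T$, I would record the elementary identity that the Puffer Transformation commutes with right multiplication by a fixed orthogonal matrix: if $G = \tilde U\tilde D\tilde V'$ is an SVD, then $\X = \tilde U\tilde D\,(U_\Sigma\tilde V)'$ is an SVD of $\X$ because $(U_\Sigma\tilde V)^T(U_\Sigma\tilde V) = \tilde V^T\tilde V = I_n$, hence $\pre\X = \tilde U(U_\Sigma\tilde V)' = (\pre_G\, G)\,U_\Sigma^T$, where $\pre_G$ denotes the Puffer Transformation of $G$ and $\pre_G G \in V(n,p)$. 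This is exactly the computation already appearing in the proof of Proposition~\ref{unifGaussian}.

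Next I would verify the two independence/invariance facts that close the argument. By rotational invariance of the standard Gaussian, conditionally on $U_\Sigma$ the matrix $W := Z U_\Sigma$ is i.i.d.\ $N(0,1)$ with a law not depending on the value of $U_\Sigma$; hence $W$, and therefore $G = W D_\Sigma^{1/2}$ and $\pre_G G$, is independent of $U_\Sigma$. On the other hand, Haar measure on $O(p,\R)$ is invariant under transposition and under right translation, so $U_\Sigma^T$ is again Haar-distributed and, for any fixed $O \in O(p,\R)$, one has $(\,\pre_G G,\ U_\Sigma^T O\,)\stackrel{d}{=}(\,\pre_G G,\ U_\Sigma^T\,)$. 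Combining the two facts gives $(\pre\X)\,O = (\pre_G G)\,U_\Sigma^T O \stackrel{d}{=} (\pre_G G)\,U_\Sigma^T = \pre\X$; since $O$ was an arbitrary element of $O(p,\R)$, this is precisely the defining property of $uniform(V(n,p))$, and integrating out the conditioning on $D_\Sigma$ completes the proof.

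I do not expect a genuine obstacle here. The only delicate points are bookkeeping ones: ensuring that the Haar factor that absorbs the fixed $O$ ends up independent of $\pre_G G$ (this is exactly what the Gaussian rotational-invariance step buys us), and confirming that conditioning on $D_\Sigma$ does not disturb the Haar law of $U_\Sigma$, which requires $D_\Sigma$ to be independent of $U_\Sigma$ — the evident intended meaning of ``either fixed or random.'' If one instead allowed arbitrary dependence between $D_\Sigma$ and $U_\Sigma$ the conclusion could fail, so I would state that independence explicitly.
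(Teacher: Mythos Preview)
Your proposal is correct and follows essentially the same route as the paper: both realize $\X$ as $Z\Sigma^{1/2}$ with $Z$ i.i.d.\ standard Gaussian, and both combine the rotational invariance $ZO\stackrel{d}{=}Z$ with the Haar invariance of $U_\Sigma$ to conclude $\pre\X\stackrel{d}{=}\pre\X\,O$. The paper carries this out as a direct chain of distributional equalities using the closed form $\pre\X=(\X\X^T)^{-1/2}\X$, whereas you first factor $\X=G\,U_\Sigma^T$ and isolate the independence of $\pre_G G$ from $U_\Sigma$; this is only a difference in bookkeeping, and your version has the mild advantage of making explicit the assumption that $D_\Sigma$ is independent of $U_\Sigma$, which the paper uses silently.
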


\begin{proof}

   
Notice that if
 $Z \in \R^{n \times p}$ has iid $N(0, 1)$ elements and $Z$ is independent of $U_\Sigma$, then  $\X \stackrel{d}{=} Z \Sigma^{1/2}$.  
The following equalities in distribution follow from the fact that for any $O \in O(p,\R)$, $U_\Sigma \stackrel{d}{=} OU_\Sigma$ and $ZO \stackrel{d}{=} Z$. 
\begin{eqnarray*}
\pre \X &=& ( \X \X^T)^{-1/2} \X\\
&\stackrel{d}{=}& (Z\Sigma Z^T)^{-1/2}Z \Sigma^{1/2}\\
&\stackrel{d}{=}& (ZOU_\Sigma D_\Sigma U_\Sigma^TO^TZ^T)^{-1/2}ZOU_\Sigma D_\Sigma^{1/2} U_\Sigma^TO^T\\
&\stackrel{d}{=}& (ZU_\Sigma D_\Sigma U_\Sigma^TZ^T)^{-1/2}ZU_\Sigma D_\Sigma^{1/2} U_\Sigma^TO^T\\
&=& \pre \X O
\end{eqnarray*}
Thus, $\pre \X \sim uniform(V(n,p))$.
\end{proof}

Proposition \ref{unifCor} presents a scenario in which  the distribution of $\pre \X$  is independent of the eigenvalues of $\Sigma$.  Of course, if $\Sigma$ has a large condition number, then the transformation $\pre = (\X \X^T)^{-1/2}$ could potentially induce excessive variance in the noise.

In practice, one usually gets a single design matrix $\X$ and a single preconditioned matrix $\pre \X \in V(n,p)$.  It might be difficult to argue that $\pre \X \sim uniform(V(n,p)).$  Instead, one should interpret Theorem \ref{unifTheorem} as saying that nearly all matrices in $V(n,p)$ satisfy the irrepresentable condition. 

So far, this section has illustrated how preconditioning circumvents the irrepresentable condition in high dimensions.  This next theorem assumes $\pre \X$ satisfies the irrepresentable condition and studies when the preconditioned Lasso estimator is sign consistent,  highlighting when $\pre$ might induce excessive noise.  

\begin{theorem}\label{highdimthm}
Suppose that data $(\X,\Y)$ follows the linear model described in Equation (\ref{regeq}) with  iid normal noises $\e \sim N(0, \sigma^2I_n)$. Define the singular value decomposition of $X$ as $\X = UDV^T$.  Suppose that $p\geq n$. We further assume that $\Lambda_{\min}(\frac{1}{n}\Xa^T\Xa) \geq \tilde C_{\min}$  and ${\min_{i}}(D^2_{ii}) \geq  p d_{\min}$ with  constants $\tilde C_{\min}>0$ and $d_{\min}>0$. For $\pre  = UD^{-1}U^T,$ define $\tilde Y = F Y$ and $\tilde \X = F \X$. Define

\[\tilde \beta(\lambda) = \arg\min_b \frac{1}{2} \|\tilde Y - \tilde \X b\|_2^2 + \lambda  \| b\|_1.\]

Under the following three conditions, 
\begin{enumerate}
\item $\left\|\tilde \X(S^c)^T\tilde\X(S)\left(\tilde\X(S)^T\tilde\X(S)\right)^{-1}\overrightarrow{b}\right\|_{\infty}\leq 1-\eta, $
\item $\Lambda_{\min}\left(\tilde \X(S)^T\tilde \X(S)\right) \geq \frac{n}{cp}$
\item $\min_{j\in S}|\truebetaj| \geq 2\lam \sqrt{scp/n} $ 
\end{enumerate}
where $c$ is some constant; we have 
\begin{equation} \label{probboundThm}
P\left(\tilde \beta(\lambda) =_s \truebeta\right) > 1-2\p \exp\left\{-\frac{p\lam^2\eta^2d_{\min}}{2\sigma^2 }\right\} .
\end{equation}
\end{theorem}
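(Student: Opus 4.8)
The plan is to follow the classical primal-dual witness (PDW) argument of \cite{zhao2006model} and \cite{wainwright2009}, but applied to the transformed regression equation \eqref{newregeq} rather than to $(\X, \Y)$. The key structural fact is that $\tilde\e = \pre\e \sim N(0, \tilde\Sigma)$ with $\tilde\Sigma = \sigma^2 U D^{-2} U^T$, and the assumption ${\min_i}(D^2_{ii}) \geq p\, d_{\min}$ controls the size of this noise: every coordinate of $\tilde\e$ (indeed every linear functional $a^T\tilde\e$ with $\|a\|_2 \le 1$) has variance at most $\sigma^2/(p\, d_{\min})$. This is the quantitative payoff that produces the exponent $p\lam^2\eta^2 d_{\min}/(2\sigma^2)$ in \eqref{probboundThm}.

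First I would set up the PDW construction: restrict the Lasso to the coordinates in $S$, solve the oracle problem $\tilde\beta_S = \arg\min_{b(S^c)=0} \frac12\|\tilde\Y - \tilde\X b\|_2^2 + \lam\|b\|_1$, and verify that the dual feasibility / strict dual feasibility conditions hold so that this oracle solution is in fact the unique Lasso solution with the correct sign pattern. The event $\{\tilde\beta(\lam) =_s \truebeta\}$ is then implied by two sub-events: (a) the ``sign on $S$'' event, that the oracle estimator $\tilde\beta_S$ has $\sign(\tilde\beta_S) = \sign(\truebeta(S))$, which by the stationarity condition $\tilde\X(S)^T(\tilde\Y - \tilde\X(S)\tilde\beta_S) = \lam\,\sign(\truebeta(S))$ reduces to showing $\|\tilde\beta_S - \truebeta(S)\|_\infty < \min_{j\in S}|\truebetaj|$; and (b) the ``zero on $S^c$'' event, strict dual feasibility, namely $\|\tilde\X(S^c)^T(\tilde\Y - \tilde\X(S)\tilde\beta_S)\|_\infty < \lam$. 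Plugging the explicit formula $\tilde\beta_S - \truebeta(S) = (\tilde\X(S)^T\tilde\X(S))^{-1}[\tilde\X(S)^T\tilde\e - \lam\,\sign(\truebeta(S))]$ into both, one gets that (a) holds as long as the stochastic term $\|(\tilde\X(S)^T\tilde\X(S))^{-1}\tilde\X(S)^T\tilde\e\|_\infty$ plus the deterministic bias $\lam\|(\tilde\X(S)^T\tilde\X(S))^{-1}\|_\infty$ is below $\min_{j\in S}|\truebetaj|$; condition 2 ($\Lambda_{\min}(\tilde\X(S)^T\tilde\X(S)) \ge n/(cp)$) bounds the bias by $\lam\sqrt{s}\cdot cp/n$, and condition 3 ($\min_{j\in S}|\truebetaj| \ge 2\lam\sqrt{scp/n}$) absorbs it with room to spare. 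For (b), substitute the same expression; the deterministic part is exactly the left side of condition 1, which is $\le 1-\eta$ times $\lam$, and what remains is a stochastic term of the form $\|\tilde\X(S^c)^T[I - \tilde\X(S)(\tilde\X(S)^T\tilde\X(S))^{-1}\tilde\X(S)^T]\tilde\e\|_\infty$ that must be shown to be $< \eta\lam$.

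The crux, then, is a Gaussian tail bound on these two maxima-of-linear-functionals of $\tilde\e$. Each entry of $\tilde\X(S^c)^T M \tilde\e$ (with $M$ the projection off the column space of $\tilde\X(S)$) is a mean-zero Gaussian with variance $\le \sigma^2 \|\text{(the relevant row vector)}\|_2^2 / (p\, d_{\min})$; since the rows of $\tilde\X$ are orthonormal, these vectors have $\ell_2$ norm at most $1$, so each coordinate is $N(0, \le \sigma^2/(p\,d_{\min}))$, and a standard Gaussian tail bound plus a union bound over at most $p$ coordinates gives failure probability at most $2p\exp\{-p\lam^2\eta^2 d_{\min}/(2\sigma^2)\}$ (and analogously, with the slack from condition 3, the $S$-sign event fails with at most the same order). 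Combining via a union bound over (a) and (b) yields \eqref{probboundThm}. The main obstacle I anticipate is bookkeeping the variance bound carefully: one needs that $M\tilde\e$ and $\tilde\X(S)^T\tilde\e$ really do inherit the $1/(p\,d_{\min})$ variance scaling, which requires writing $\tilde\e = U D^{-1} U^T \e$ and checking that composing with the (bounded-operator-norm) matrices $M$, $\tilde\X(S^c)^T$, and $(\tilde\X(S)^T\tilde\X(S))^{-1}\tilde\X(S)^T$ does not blow up the constants beyond what conditions 1--2 and ${\min_i}(D^2_{ii}) \ge p\,d_{\min}$ permit — in particular, condition 2 is exactly what is needed to keep $(\tilde\X(S)^T\tilde\X(S))^{-1}\tilde\X(S)^T$ from amplifying the noise, and its interplay with the $\sqrt{scp/n}$ factor in condition 3 is what makes the $S$-sign event succeed at the same exponential rate.
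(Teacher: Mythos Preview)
Your approach is essentially the paper's: the paper packages the primal--dual witness argument into a preliminary lemma (Lemma~\ref{thm:consis}, stated for a general Gaussian noise covariance $\Sigma_\e$) and then proves Theorem~\ref{highdimthm} by verifying that lemma's hypotheses for $(\tilde\X,\tilde\Y,\tilde\e)$, computing $\Lambda_{\max}(\Sigma_{\tilde\e})=\sigma^2/\min_i D_{ii}^2\le \sigma^2/(p\,d_{\min})$ exactly as you do; your events (a) and (b) are the paper's $\mathcal{M}(U)$ and $\mathcal{M}(V)$. One bookkeeping point to watch: your deterministic bias bound $\lam\|(\tilde\X(S)^T\tilde\X(S))^{-1}\overrightarrow{b}\|_\infty\le \lam\sqrt{s}\cdot cp/n$ (the straight operator-norm estimate) is \emph{not} absorbed by condition~3 when $p\gg n$, since $2\lam\sqrt{scp/n}\ll \lam\sqrt{s}\,cp/n$ in that regime; the paper instead records the bound $\sqrt{s/C_{\min}}\le\sqrt{scp/n}$ for this term, which is what makes condition~3 exactly sufficient, so you will need to match that sharper estimate rather than the crude $\sqrt{s}/C_{\min}$.
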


This theorem explicitly states when preconditioning will lead to sign consistency.  
Note that $d_{\min}$ in the exponent of Equation (\ref{probboundThm}) corresponds to the amount of additional noise induced by the preconditioning.   For $p\gg n$, the assumption $\min_i(D_{ii}^2) \geq p d_{min}$ is often satisfied.  For example, it holds if $\X\sim N(0, \Sigma)$  and the eigenvalues of $\Sigma$ are lower bounded.   To see this, define $\X = Z \Sigma^{1/2}$ for $Z\in\R^{n\times p}$ with $Z_{ij} \sim_{i.i.d.} N(0,1)$.  Then,
\[D_{ii}^2 \ge  \min_{w\in \R^n, \|w\|_2 = 1} \|w' \X\|_2^2 = \min_{w\in \R^n, \|w\|_2 = 1} \|w' Z \Sigma^{1/2}\|_2^2  \ge \min_{w\in \R^n, \|w\|_2 = 1} \|w' Z\|_2^2 \lambda_{\min}(\Sigma),\]
where $\lambda_{\min}(\Sigma)$ is the smallest eigenvalue of $\Sigma$.  With high probability, $\frac{1}{p}\|w' Z\|_2^2$ is bounded below  \citep{DavidsonS2001}.   Thus, $\min_i(D_{ii}^2) \geq p d_{min}$ holds for some constant $d_{\min}$ with high probability.

The enumerated conditions in Theorem \ref{highdimthm} correspond to standard assumptions for the Lasso to be sign consistent.  Condition (1) is the irrepresentable condition applied to $\tilde \X$.  Condition (2) ensures first, that the columns in $\tilde \X(S)$ are not too correlated and second, that the columns in $\tilde \X(S)$ do not become too short since $\pre$ rescales the lengths of the columns.  From Section \ref{subsec:Smanifold} and the discussion of the uniform distribution on the Stiefel manifold, most matrices in $V(n,p)$ satisfy  condition (1) as long as $s = o(n^{1/4})$ and $p^2 = o(\exp(\sqrt{n}))$.  Similarly, most matrices satisfy condition (2); Theorem \ref{thm:stiefel} in Appendix Section \ref{subsec:Smanifold} states that the diagonals of $\tilde \X(S)^T\tilde \X(S)$  concentrate around $n/p$ and the off-diagonals concentrate around $0$.     Condition (3) in Theorem \ref{highdimthm} ensures that the signal strength does not decay too fast relative to $\lambda$.  The next corollary chooses a sequence of $\lambda$'s.
\begin{corollary} 
Under the conditions of Theorem \ref{highdimthm},  if $\min_{j\in S}|\truebeta_j|$ is a constant and $\\ n/(s\log p) \rightarrow \infty$, then setting $\lambda^2 = \sqrt{n\log( p )/(sp^2)}$ ensures sign consistency.
\end{corollary}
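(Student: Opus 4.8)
The plan is to verify directly that the stated choice $\lam^2 = \sqrt{n\log p/(sp^2)}$ simultaneously (i) satisfies Condition (3) of Theorem \ref{highdimthm} for all sufficiently large $n$, and (ii) drives the failure probability in (\ref{probboundThm}) to zero. All of the remaining hypotheses of Theorem \ref{highdimthm} --- in particular Conditions (1) and (2), the lower bound $\min_i D_{ii}^2 \ge p d_{\min}$, and the constants $\tilde C_{\min}, d_{\min}, \eta$ --- are assumed to hold here, so once (i) and (ii) are in place the conclusion $P(\tilde\beta(\lam) =_s \truebeta) \to 1$ follows immediately from (\ref{probboundThm}).

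First I would check Condition (3). Squaring and simplifying, $\bigl(2\lam\sqrt{scp/n}\bigr)^2 = 4c\,\tfrac{sp}{n}\sqrt{\tfrac{n\log p}{sp^2}} = 4c\sqrt{\tfrac{s\log p}{n}}$, so that $2\lam\sqrt{scp/n} = 2\sqrt{c}\,(s\log p/n)^{1/4}$. The hypothesis $n/(s\log p)\to\infty$ gives $(s\log p/n)^{1/4}\to 0$, so for $n$ large enough the right-hand side drops below the fixed constant $\min_{j\in S}|\truebetaj|$; hence Condition (3) holds eventually, which is all that is needed for an asymptotic statement.

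Next I would analyze the probability bound. Substituting, $p\lam^2 = p\sqrt{n\log p/(sp^2)} = \sqrt{n\log p/s}$, so the exponent in (\ref{probboundThm}) equals $\frac{\eta^2 d_{\min}}{2\sigma^2}\sqrt{n\log p/s}$ and the failure probability is $2p\exp\bigl\{-\frac{\eta^2 d_{\min}}{2\sigma^2}\sqrt{n\log p/s}\bigr\}$. Taking logarithms, this tends to $-\infty$ iff $\log p = o\bigl(\sqrt{n\log p/s}\bigr)$, i.e. iff $\sqrt{s\log p/n}\to 0$, which is exactly the assumed condition $n/(s\log p)\to\infty$; the same assumption also guarantees $\sqrt{n\log p/s}\to\infty$, so the exponent genuinely blows up. Hence the lower bound in (\ref{probboundThm}) converges to $1$ and the preconditioned Lasso is sign consistent.

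The computations are entirely routine; the only point requiring care is the bookkeeping in the last step --- recognizing that the competition between the $\log p$ from the union bound over the $p$ coordinates and the $\sqrt{n\log p/s}$ in the exponent reduces precisely to the hypothesis $n/(s\log p)\to\infty$, with no slack to spare. If a cleaner margin were desired, one could take $\lam^2 = A\sqrt{n\log p/(sp^2)}$ for a large constant $A$, but the stated choice already works.
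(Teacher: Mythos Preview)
Your proposal is correct. The paper states this corollary without proof, so there is no argument in the paper to compare against; your verification supplies exactly the routine check the authors omitted. Both computations --- that $2\lam\sqrt{scp/n} = 2\sqrt{c}\,(s\log p/n)^{1/4} \to 0$ and that $p\lam^2 = \sqrt{n\log p/s}$ dominates $\log p$ precisely when $n/(s\log p)\to\infty$ --- are accurate, and together they are all that is needed to invoke Theorem~\ref{highdimthm}.
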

%

  Theorem \ref{highdimthm} highlights the tradeoff between (a) satisfying the irrepresentable condition and (b) limiting the amount of additional noise created by preconditioning.  Interestingly, even if $\X$ satisfies the irrepresentable condition, the Puffer Transformation might still improve sign estimation performance as long as the preconditioner can balance the tradeoff.  To illustrate this, presume that both $\tilde \X$ and $\X$ satisfy the irrepresentable condition with constants $\tilde \eta$ and $\eta$ respectively.  Preconditioning improves the bound in Equation \ref{probboundThm} if $\tilde \eta^2 d_{\min} > \eta^2$. Alternatively, if $d_{\min}$ is very small,  then the Puffer Transformation will induce excess noise and $\tilde \eta^2 d_{\min} < \eta^2$, weakening the upper bound.

\section{Simulations} \label{sim}


The first two simulations in this section compare the model selection performance of the preconditioned Lasso to the standard Lasso, Elastic Net, SCAD, and MC+.  The third simulation compares the uniform distribution on $V(n,p)$ to the distribution that places iid Gaussian ensemble distribution, estimating how often these distributions satisfy the irrepresentable condition.

\subsection{Choosing $\lam$ with BIC} \label{olsbic}


After preconditioning, the noise vector $\pre \e$ contains statistically dependent terms that are no longer exchangeable.  This creates problems for selecting the tuning parameter in the preconditioned Lasso.  For example, if one wants to use cross-validation, then the test set should not be preconditioned.  After several attempts, we were unable to find a cross-validation procedure that led to adequate model selection performance.  In this section, there are two sets of simulations that correspond to two ways of choosing $\lam$.    The first set of simulations choose $\lam$ using a BIC procedure.  To ensure that the results are not due to the BIC procedure, the second set of simulations choose the $\lam$ such that it selects the first model with ten degrees of freedom.

The first set of simulations choose $\lam$ with the following procedure.

\vspace{.1in}

\noindent
\fbox{
\parbox{5.7in}{
\vspace{.1in}
\noindent \textbf{OLS-BIC; to choose a model in a path of models.}

\noindent 
1) For df = 1:40 \begin{enumerate}
	\item[a)] starting from the null model, select the first model along the solution path with df degrees of freedom.
	\item[b)] use the df model to fit an OLS model.  The OLS model is fit with the original (un-preconditioned) data. 
	\item[c)] compute the BIC for the OLS model. 
\end{enumerate}
\hspace{.15in} end

\vspace{.1in}

2) Select the tuning parameter that corresponds to the model with the lowest OLS-BIC score.
}}
\vspace{.1in}

The OLS models were fit with the R function \texttt{lm} and the BIC was computed with the R function \texttt{BIC}.

In this simulation and that of Section \ref{top10},  $n = 250$, $s= 20$, and $p$ grows along the horizontal axis of the figures (from 32 to $32\,768$).  All nonzero elements in $\truebeta$ equal ten and $\sigma^2 = 1$.  The rows of $\X$ are mean zero Gaussian vectors with constant correlation $\rho$.

\begin{figure}
\centering
\mbox{\subfigure{\includegraphics[width=1.7in]{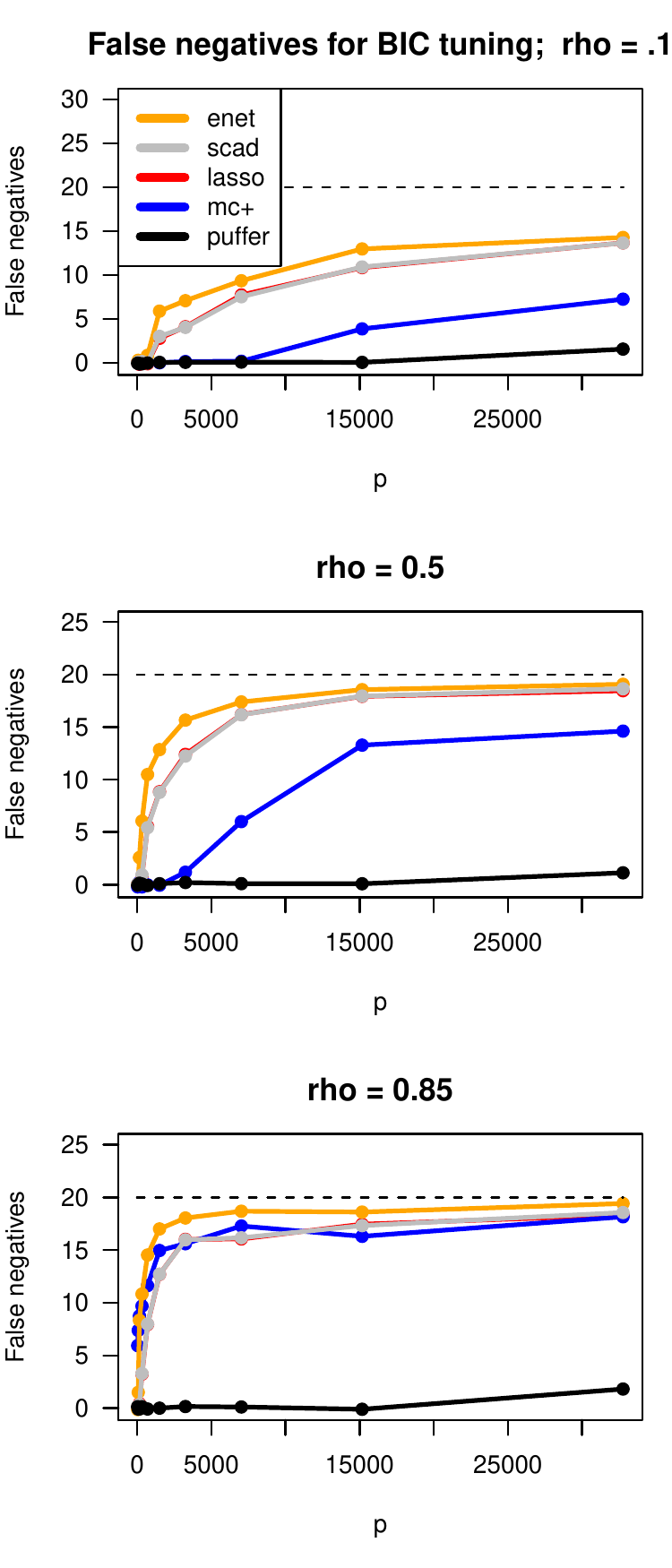}}\quad
\subfigure{\includegraphics[width=1.7in]{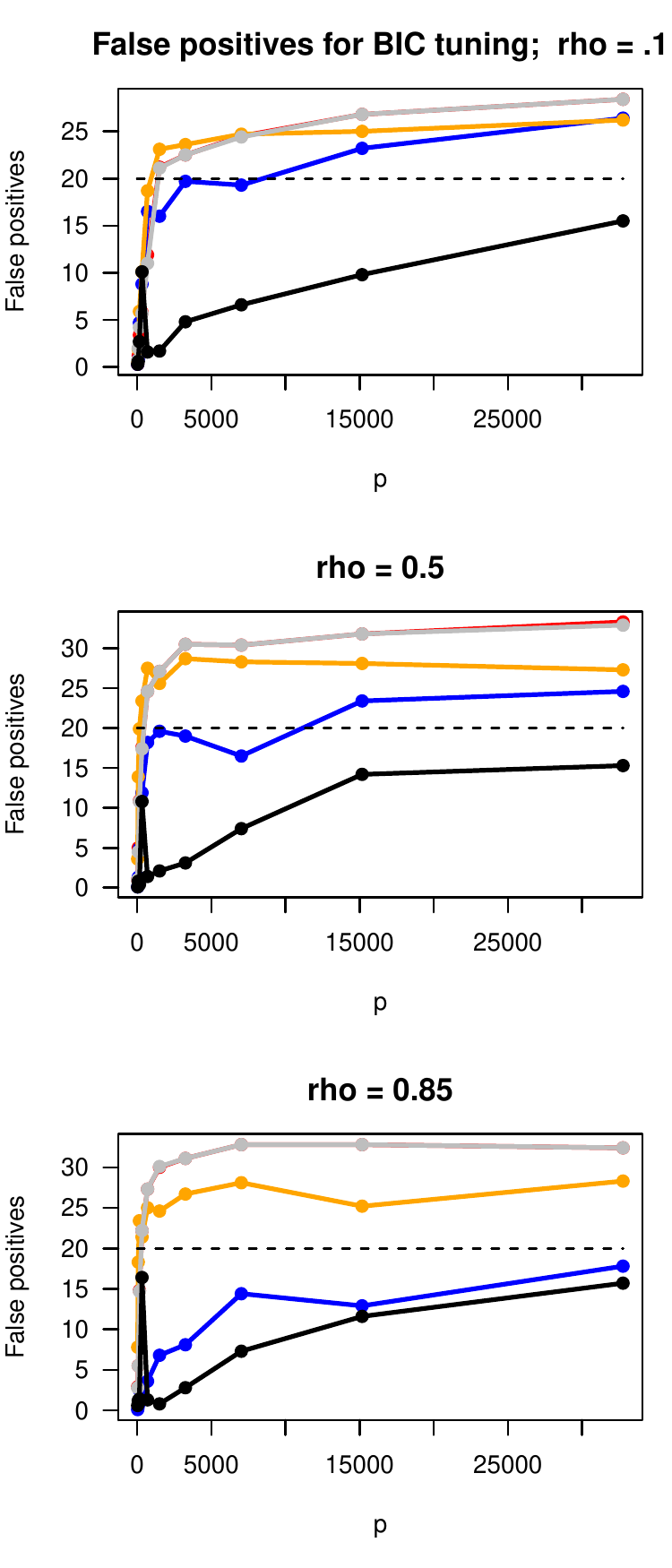} }\quad
\subfigure{\includegraphics[width=1.7in]{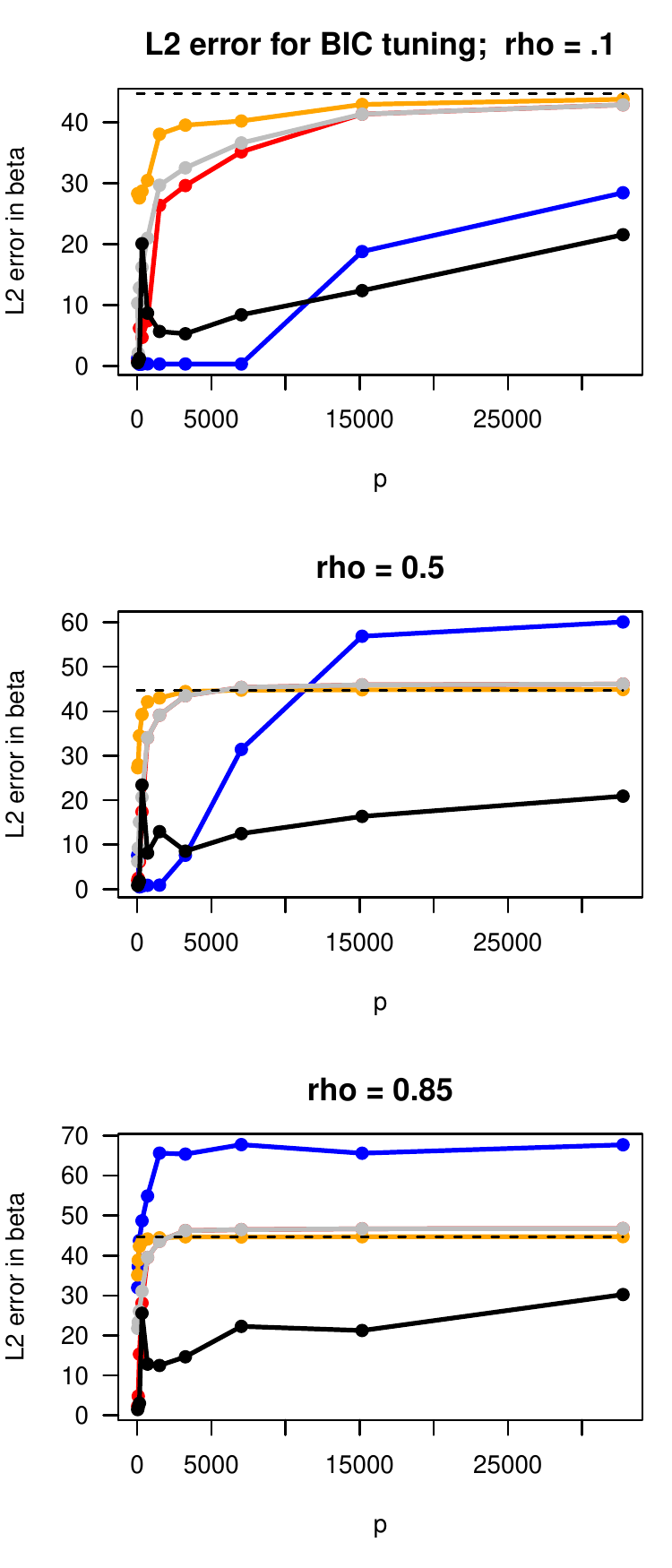} }}
\caption{In all figures, $n = 250$ and $p$ grows along the horizontal axis.    Each row of $\X$ is an independent multivariate Gaussian with constant correlation $\rho$ in the off diagonal of the correlation matrix. The figure above has three rows and three columns of plots.  The three rows correspond to different values of $\rho$.  From top to bottom, $\rho$ is $.1, .5,$ and $.85$.  The three columns correspond to the number of false negatives (on left), the number of false positives (in middle), and $\|\hbet - \truebeta\|_2$ (on right).  The tuning parameter for each method is selected with the OLS-BIC procedure described in Section \ref{olsbic}.  The results from the preconditioned Lasso appear as a solid black line.  Note that the number of false negatives cannot exceed $s = 20$.  In the plots on the left side, a dashed horizontal line at 20 represents this limit. For scale, this dashed line is also included in the false positive plots.  In the $\ell_2$ error plots, the dashed line corresponds to the $\ell_2$ error for the estimate $\hat \beta = 0$.   For both $\rho = .5$ and $.85$, the competing methods miss a significant fraction of the true nonzero coefficients and only at the very end, for $p > 32,000$, does the preconditioned lasso start to miss any of the true coefficients.  At the same time, the preconditioned Lasso accepts fewer false positives than the alternative methods. 
} \label{fig:BIC}
\end{figure}

In Figure \ref{fig:BIC}, there are three columns of plots and three rows of plots.  The rows correspond to different levels of correlation $\rho$;  $.1$ in the top row, $.5$ in the middle row, and $.85$ on the bottom row.  The columns correspond to different measurements of the estimation error;  the number of false negatives on the left, the number of false positives in the middle, and the $\ell_2$ error $\|\hbet - \truebeta\|_2$ on the right.   Each data point in every figure comes from an average of ten simulation runs.  In each run, both $\X$ and $Y$ are resampled.

In many settings, across both $p$ and $\rho$,  the preconditioned Lasso simultaneously admits fewer false positives and fewer false negatives than the competing methods.  The number of false negatives when $\rho = .85$ (displayed in the bottom left plot of Figure \ref{fig:BIC}) gives the starkest example.  However, the results are not uniformly better.  For example, when $p \approx n= 250$, the preconditioned Lasso performs poorly;  this behavior around $p \approx n$ appeared in several other simulations that are not reported in this paper.  Further, 
  when the design matrix has fewer predictors $p$ and  lower correlations $\rho$, MC+ has smaller $\ell_2$ error than the  preconditioned Lasso.  However, as the correlation increases or the number of predictors grows, the preconditioned Lasso appears to outperform MC+ in $\ell_2$ error.

\subsection{Ensuring that OLS-BIC gives a fair comparison} \label{top10}

To ensure that the results in Figure \ref{fig:BIC} are not an artifact of the OLS-BIC tuning,  Figure \ref{fig:pgrowing} tunes procedure by taking the first model to contain at least ten predictors.  The horizontal axes displays $p$ on the log scale. The vertical axes report the number of false positives divided by ten.   Each dot in the figure represents an average of ten runs of the simulation, where each run resamples $\X$ and $Y$.

Figure \ref{fig:pgrowing} shows that as $p$ grows large, the standard Lasso, Elastic Net (enet), SCAD, and MC+ perform in much the same way, whereas the preconditioned Lasso drastically outperforms all of them.  For the largest $p$ in the middle and lower graphs, the preconditioned Lasso yields two or three false positives out of ten predictors;  all of the other methods have at least eight false positives out of ten predictors.  These results suggest that the results in Figure \ref{fig:BIC} were not an artifact of the OLS-BIC tuning.

The preconditioned Lasso does not perform well when $p$ is close to $n=250$.  This is potentially due to the instability of the spectrum of $\pre$ when $p$ and $n$ are comparable.  Results in random matrix theory suggest that in this regime, $C_{\min}$ (the smallest nonzero singular value of $\X$) can be very close to zero \citep{silverstein1985smallest}.  When the spectral norm of the Puffer Transform ($1/C_{\min}$) is large, for example when $p \approx n$, preconditioning can greatly amplify the noise, leading to poor estimation performance. As previously mentioned, it is an area for future research to investigate if a Tikhonov regularization of $\pre$ might improve the performance in this regime. 

\begin{SCfigure}[1.1]
  \centering
  \includegraphics[width=2in]    {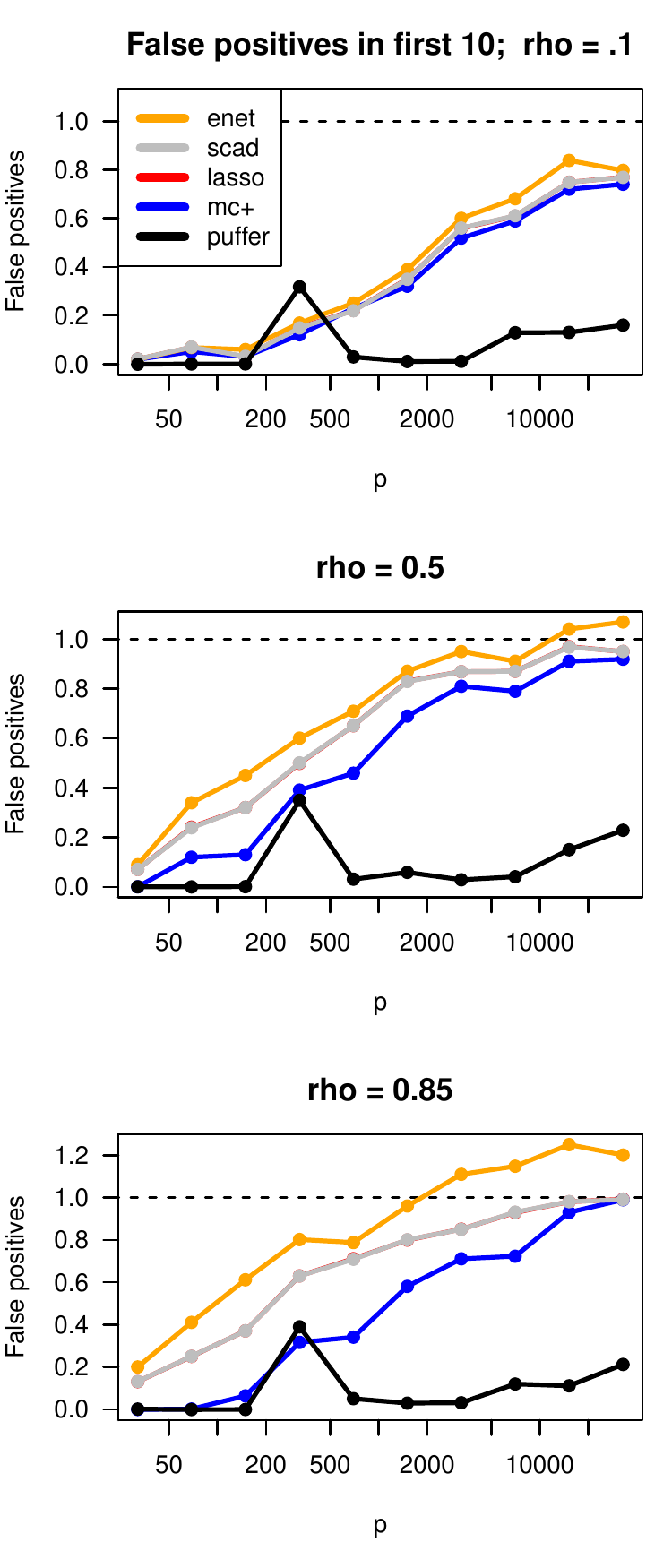}
  \caption[width = 2in]{ To ensure that the results in Figure \ref{fig:BIC} are not due to the choice of tuning parameter, this figure chooses the tuning parameter in a much simpler fashion.   Each tuning parameter corresponds to the first estimated model that contains at least ten predictors.   Along the horizontal axis, $p$ increases on the log scale.  The vertical axis displays  the number of false positives divided by ten.  The line for the elastic net can exceed one because the default settings of glmnet sometimes fail to select a model with exactly 10 predictors.  So, the tuning method selects the next biggest model.  Both $\truebeta,$ and $\sigma^2$ are unchanged from the simulations displayed in Figure \ref{fig:BIC} and $\X$ comes from the same distribution.  
  The results in this figure are comparable to the results in Figure \ref{fig:BIC}.  This suggests that the results from OLS-BIC provide a fair comparison of the methods.  \vspace{1in}
  }
   \label{fig:pgrowing}
\end{SCfigure}

All simulations in this section were deployed in R with the packages lars (for the Lasso), plus (for SCAD and MC$+$), and glmnet (for the elastic net).  All packages were run with their default settings.

\subsection{Satisfying the irrepresentable condition on $V(n,p)$} 
Theorem \ref{unifTheorem} shows that, under certain conditions, if $V \sim  uniform(V(n,p))$ then $V$ satisfies the irrepresentable condition with high probability.  One of the more undesirable  conditions is that it requires $n >  cs^4$, where $c$ is a constant and $s$ is the number of relevant predictors.  As is discussed after Theorem \ref{unifTheorem}, if the design matrix contains iid $N(0,1)$ entries, then it is only required that $n>cs^2$.  The simulation displayed in Figure \ref{fig:ircforvnp} compares the irrepresentable condition scores:
\begin{equation} \label{ircscore}
\| IC(X, S) = \X(S^c)' \X(S) \left(\X(S)' \X(S)\right)^{-1} \textbf{1}_{s}\|_\infty,
\end{equation}
where $\textbf{1}_{s} \in \R^{s}$ is a vector of ones, for $\X$ from the iid $N(0,1)$ distribution and for $\X \sim uniform(V(n,p))$.  In these simulations, $n=200$ and $p=10,000$.

\begin{figure}[htbp] 
   \centering
   \includegraphics[width=5.5in]{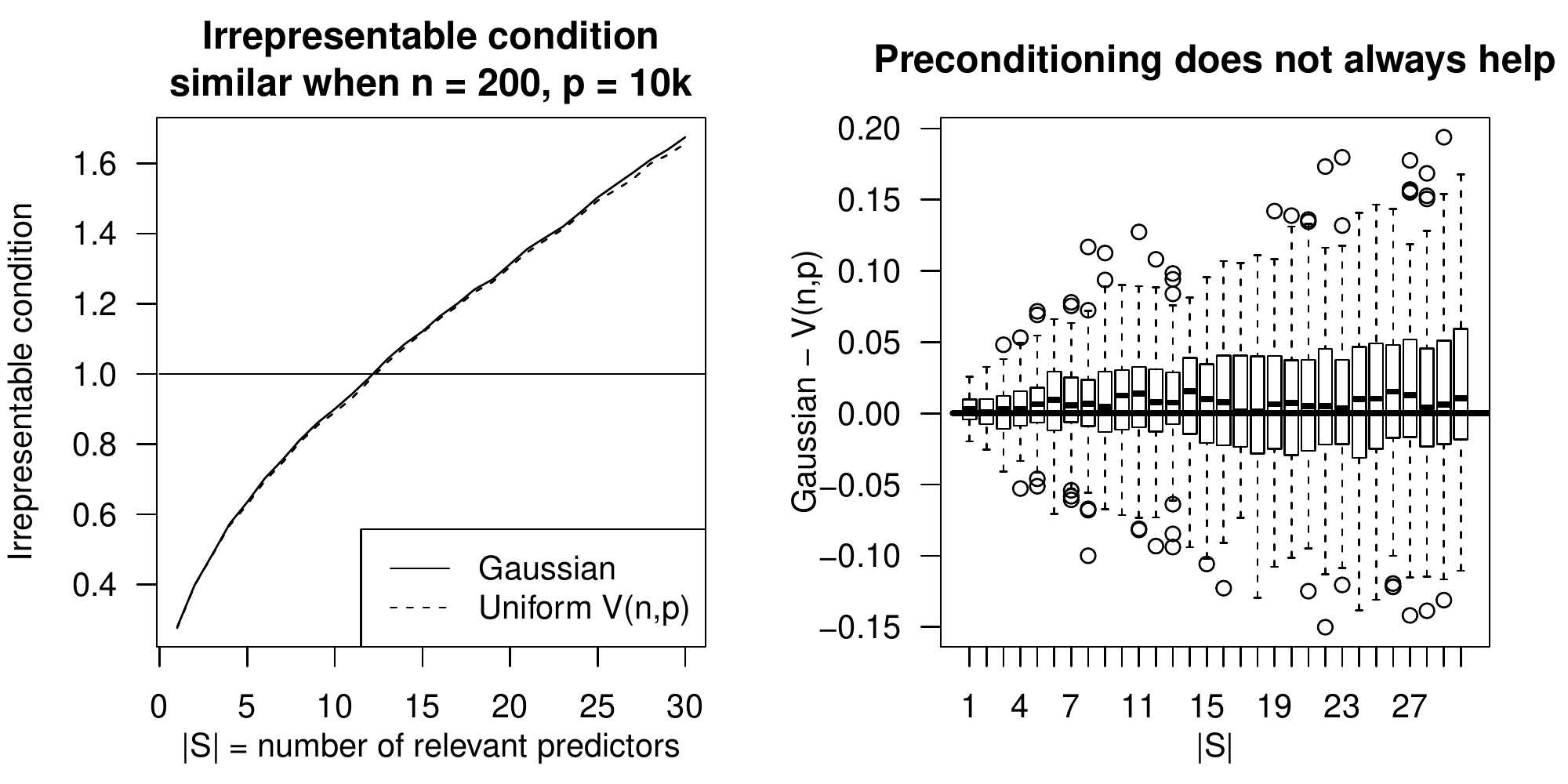} 
   \caption{The left panel displays a Monte Carlo estimate of $E(IC(\X, S))$ where $IC$ is defined in Equation \ref{ircscore}.  The solid line corresponds to random matrices $\X$ with iid $N(0,1)$ entries. The dashed line corresponds to $uniform(V(n, p))$.  The lines are nearly identical, suggesting that $uniform(V(n, p))$ design matrices behave similarly to iid Gaussian designs with respect to the irrepresentable condition.  The right panel displays the quantities $IC(\X^{(i)}, S) - IC(V^{(i)}, S)$, where $V^{(i)}$ is the projection of $\X^{(i)}$ onto $V(n, p)$.  The variability displayed in the right panel implies that preconditioning only helps for a certain type of design matrix;  it does not always help.  In this example, the columns of $\X^{(i)}$ are not highly correlated  and sometimes preconditioning decreases $IC(\cdot, S)$ and sometimes it increases $IC(\cdot, S)$. }
   \label{fig:ircforvnp}
\end{figure}

This simulation sampled 100 matrices $\X^{(i)} \in \R^{200 \times 10,000}$with iid $N(0,1)$ entries, with $i = 1, \dots, 100$.  Each of these matrices $\X^{(i)}$ were then projected onto $V(n, p)$ with the Puffer Transformation; $V^{(i)} = \pre^{(i)} \X^{(i)}$, where $\pre^{(i)}$ is the Puffer Transformation for $\X^{(i)}$.  By Proposition \ref{unifGaussian}, $V^{(i)} \sim uniform(V(n, p))$.  Before doing any of the remaining calculations, each matrix was centered to have mean zero and scaled to have standard deviation one.  For each of these (centered and scaled) matrices $\X^{(i)}$ and $V^{(i)}$, the values $IC(\cdot, S)$ are computed for $S = \{1, \dots, q\}$ for $q  = \{1, \dots, 30\}$.  (So, the matrices $\X^{(i)}$ and $V^{(i)}$ are recycled 30 times, once for each value of $s$.) The left panel of Figure \ref{fig:ircforvnp} plots the average $IC(\cdot, S)$ for each distribution, for each value of $s$.  The solid line corresponds to the $\X^{(i)}$'s, and the dashed line corresponds to the $V^{(i)}$'s.  These lines are nearly identical. 
This suggests that $uniform(V(n, p))$ and the iid Gaussian design perform similarly with respect to the irrepresentable condition.

The right plot in Figure \ref{fig:ircforvnp} displays the quantities $IC(\X^{(i)}, S) - IC(V^{(i)}, S)$ as a function of $s$. The boxplots illustrate that, even between the paired samples $\X^{(i)}$ and $V^{(i)}$, there is significant variation in $IC(\cdot, S)$.  Further, there are several cases where it is negative, implying
\[IC(\X^{(i)}, S) < IC(V^{(i)}, S).\]  
In these cases, preconditioning makes $IC(\cdot, S)$ larger (in addition to make the noise terms dependent).  This suggests that only a certain type of data can benefit from preconditioning.  


\section{Relationship to other data analysis techniques} \label{related}
This final section identifies four statistical techniques that incidentally precondition the design matrix.  Just as a good preconditioning matrix can improve the conditioning of $\X$, a bad preconditioning matrix can worsen the conditioning of $\X$.  Any processing or preprocessing step which is equivalent to multiplying $\X$ by another matrix could potentially lead to an ill conditioned design matrix.   It is an area for future research to assess if/how these methods affect the conditioning and if/how these issues cascade into estimation performance.  


\subsubsection*{Bootstrapping}  In the most common form of the bootstrap, the ``non-parametric" bootstrap, one samples pairs of observations $(Y_i, x_i)$ with replacement from all $n$ observations \citep{efron1993introduction}.  Each bootstrap sampled data set is equivalent to left multiplying the regression equation by a diagonal matrix with a random multinomial vector down the diagonal.  This notion is generalized in the weighted bootstrap \citep{mason1992rank}, which has been suggested for high dimensional regression \citep{arlot2010some}.  The weighted bootstrap replaces the multinomial vector of random variables with any sequence of exchangeable random variables.  In either case, such a diagonal matrix will make the singular values of the design matrix more dispersed, which could lead to potential problems in satisfying the irrepresentable condition.  If $\X$ satisfies the irrepresentable condition with a constant $\eta$, then a large proportion of the the bootstrap samples of $\X$ might not satisfy the irrepresentable condition.  Or, if they satisfy the irrepresentable condition, it might be with a reduced value of $\eta$, severely weakening their sign estimation performance. \cite{el2010high} encountered a similar problem when using the bootstrap to study the smallest eigenvalue of the sample covariance matrix.  

\subsubsection*{Generalized least squares} In classical linear regression, 
\[Y = \X \truebeta + \epsilon\]
with $\X \in \R^{n \times p}$ and $p<<n$, if the error terms have expectation zero and covariance $\Sigma_\epsilon = \sigma^2 I$, then the ordinary least squares (OLS) estimator is the best linear unbiased estimator.  If the covariance of the noise vector is a known matrix $\Sigma_\epsilon$ which is not proportional to the identity matrix, \cite{Aitken} proposed what is commonly called generalized least squares (GLS).  In GLS, the regression equation is preconditioned:
\[\Sigma_\epsilon^{-1/2}Y = \Sigma_\epsilon^{-1/2} \X \truebeta + \Sigma_\epsilon^{-1/2} \epsilon.\]
This ensures that the covariance of the noise term is proportional to the identity matrix, cov($\Sigma_\epsilon^{-1/2} \epsilon) = I$.  Applying OLS to the transformed equation gives a best linear unbiased estimator. 


\cite{huang2010spatial} proposed $\ell_1$ penalized GLS for spatial regression.  In effect, they preconditioned the regression equation with $\Sigma_\epsilon^{-1/2}$.  This is potentially inadvisable.  The harm from an ill conditioned design matrix potentially overwhelms the gains provided by uncorrelated error term.  Similarly, if a data set has heteroskedastic noise, it is inadvisable to reweight the matrix to make the noise terms homoscedastic.  This was observed in Simulation 3 of \cite{jia}\footnote{It was the research in \cite{jia} that prompted the current paper.}

\subsubsection*{Generalized linear models}
In a generalized linear model, the outcome $Y_i$ is generated from a distribution in the exponential family and   $E(Y_i) = g^{-1}(x_i \truebeta)$ for some link function $g$ \citep{nelder1972generalized}.  
In the original Lasso paper, \cite{tibshirani1996regression} proposed applying a Lasso penalty to generalized linear models. To estimate these models, \cite{park2007l1} and \cite{friedman2010regularization} both proposed iterative algorithms with fundamental similarities to Iteratively Reweighted Least Squares.  At every iteration of these algorithms, $\X$ and $Y$ are left multiplied by a different re-weighting matrix.  It is an area for future research to examine if the sequence of re-weighting matrices might lead to an ill conditioned design and whether there are additional algorithmic steps that might ameliorate this issue.  

\subsubsection*{Generalized Lasso}
Where the standard Lasso penalizes with $\|b\|_1$, the generalized Lasso penalizes $\|D b\|_1$ for some predefined matrix $D$ \cite{tibshirani2011solution}.  For example, $D$ may exploit some underlying geometry in the data generating mechanism.  If $D$ is invertible, then the generalized Lasso problem 
\[\min_b \|Y - \X b\|_2^2  + \lambda \|D b\|_1\]
can be transformed into a standard Lasso problem for $\theta = D b$,
\[\min_\theta \|Y - \X D^{-1} \theta \|_2^2  + \lambda \|\theta \|_1.\]
In this problem, the design matrix is right multiplied by $D^{-1}$.  In this paper, we only consider left preconditioning, where the matrix $\X$ is left multiplied by the preconditioner.  However, in numerical linear algebra, there is also right preconditioning.  Just as a bad left preconditioner can make the design matrix ill conditioned, so can a bad right preconditioner.  In practice, the statistician should understand if their matrix $D$ makes their design ill conditioned. 


\section{Discussion} \label{disc}

The information technology revolution  has brought on a new type of data, where the number of predictors, or covariates, is far greater than the number of observations.  To make statistical inferences in this regime, one needs to assume that the ``truth" lies in some low dimensional space. This paper addresses the Lasso for sparse regression.  To achieve sign consistency, the Lasso requires a stringent irrepresentable condition.  To avoid the irrepresentable condition, \cite{fan2001variable,  zou2006adaptive, zhang2010nearly}  suggest alternative penalty functions, \cite{xiong2011orthogonalizing} proposed an EM algorithm, and others, including \cite{greenshtein2004persistence} and \cite{shao2012estimation}, have described that alternative forms of consistency that do not require stringent assumptions on the design matrix.  In this paper, we show that it is not necessary to rely on nonconvex penalties, nor is it necessary to move to alternative forms of consistency.  We show that preconditioning has the potential to circumvent the irrepresentable condition in several settings.  This means that a preprocessing step can make the Lasso sign consistent. Furthermore, this preprocessing step is easy to implement and it is motivated by a wide body of research in numerical linear algebra.

The preconditioning described in this paper left multiplies the design matrix $\X$ and the response $Y$ by a matrix $\pre = UD^{-1}U'$, where $U$ and $D$ come from the SVD of $\X = UDV'$.  This preprocessing step makes the columns of the design matrix less correlated; while the original design matrix $\X$ might fail the irrepresentable condition, the new design matrix $\pre \X$ can satisfy it. In low dimensions, our preconditioner, the Puffer Transformation, ensures that the design matrix always satisfies the irrepresentable condition.  In high dimensions, the Puffer Transformation projects the design matrix onto the Stiefel manifold, and Theorem \ref{unifTheorem} shows that most matrices on the Stiefel manifold satisfy the irrepresentable condition. In our simulation settings, the Puffer Transformation drastically improves the Lasso's estimation performance, particularly in high dimensions.  We believe that this opens the door to several other important questions (theoretical, methodological, and applied) on how preconditioning can aid sparse high dimensional inference.  We have focused on preconditioning and the sign consistency of the Lasso.  However, preconditioning also has the potential to benefit several other areas as well, including Basis Pursuit and the Restricted Isometry Principle \citep{chen1994basis, candes2008restricted} and 
forms of $\ell_2$ consistency for the Lasso and the restricted eigenvalue condition \citep{bickel2009simultaneous}.

This is the first paper to demonstrate how preconditioning the standard linear regression equation can circumvent the stringent irrepresentable condition.  This represents a computationally straightforward fix for the Lasso inspired by an extensive numerical linear algebra literature.  The algorithm easily extends to high dimensions and, in our simulations, demonstrates a selection advantage and improved $\ell_2$ performance over previous techniques in very high dimensions.  


\appendix
\begin{center}
{\large Appendix}
\end{center}
We prove our theorems in the appendix.

\section{Low dimensional settings}

We first give a well-known result that makes sure the Lasso exactly recovers the sparse pattern of $\truebeta$, that is
$\hbet =_s \truebeta$. The following Lemma gives necessary
and sufficient conditions for $\mbox{sign}(\hbet) =
\mbox{sign}(\truebeta)$.  \cite{wainwright2009} gives
these conditions that follow from KKT conditions.

\begin{lemma}{\label{KKT}}For linear model $Y=\X\truebeta+\epsilon$,
assume that the matrix $\Xa^T\Xa$ is invertible. Then for any given
$\lam>0$ and any noise term $\e\in \R^n$, there exists a Lasso
estimate $\hbet$ described in Equation (\ref{lasso}) which satisfies $\hbet=_s\truebeta$, if and only if
the following two conditions hold
\begin{equation}
\left|\Xb^T\Xa(\Xa^T\Xa)^{-1}\left[\Xa^T\e-\lam
\sign(\betA)\right]-\Xb^T\e\right|\leq \lam, \label{R1}
\end{equation}
\begin{equation}
sign\left(\betA+(\Xa^T\Xa)^{-1}\left[\Xa^T\e-\lam
sign(\betA)\right]\right)=\sign(\betA), \label{R2}
\end{equation}
where the vector inequality and equality are taken elementwise.
Moreover, if the inequality (\ref{R1}) holds strictly, then
$$\hat{\beta}=(\hbetA,0)$$ is the unique optimal solution to the Lasso problem in Equation (\ref{lasso}), where
\begin{equation}
\hbetA=\betA+(\Xa^T\Xa)^{-1}\left[\Xa^T\e-\lam
\sign(\truebeta)\right]. \label{R3}
\end{equation}
\end{lemma}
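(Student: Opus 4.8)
The plan is to read the lemma off from the subdifferential (KKT) optimality conditions of the convex program in Equation~(\ref{lasso}), splitting the stationarity equation into its $S$ and $S^c$ blocks. First I would record the optimality condition: $\hbet$ minimizes $\ell(b,\|\cdot\|_1,\lam)$ if and only if there is a vector $z$ with $\X^T(\X\hbet - Y) + \lam z = 0$, where $z_j = \sign(\hbet_j)$ wherever $\hbet_j \neq 0$ and $z_j \in [-1,1]$ otherwise. Then I would impose $\hbet =_s \truebeta$, i.e.\ $\hbet(S^c) = 0$ and $\sign(\hbet(S)) = \sign(\betA)$, and substitute $Y = \Xa\betA + \e$. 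The $S$-block of the stationarity equation reads $\Xa^T\Xa\big(\hbet(S) - \betA\big) = \Xa^T\e - \lam\,\sign(\betA)$; since $\Xa^T\Xa$ is invertible this pins $\hbet(S)$ down to the expression in~(\ref{R3}), and the requirement that this vector genuinely carry the sign pattern $\sign(\betA)$ is precisely condition~(\ref{R2}). Substituting~(\ref{R3}) into the $S^c$-block solves for $z(S^c)$ explicitly in terms of $\e$, $\lam$ and $\sign(\betA)$, and the admissibility constraint $\|z(S^c)\|_\infty \le 1$ is precisely inequality~(\ref{R1}). This gives both directions of the equivalence: a sign-correct Lasso solution exists only if~(\ref{R1}) and~(\ref{R2}) hold, and conversely, if they hold, then $\hbet = (\hbetA,0)$ together with the $z$ just constructed satisfies every KKT condition and is therefore a Lasso solution with the prescribed signs.

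For the uniqueness assertion under a \emph{strict} inequality in~(\ref{R1}), I would invoke the standard fact that all Lasso minimizers share the same fitted vector $\X\hbet$: if two minimizers had distinct fitted vectors, strict convexity of $u \mapsto \|Y - u\|_2^2$ evaluated at their midpoint, together with convexity of $\|\cdot\|_1$, would produce a strictly smaller objective, a contradiction. Hence the dual vector $z = \lam^{-1}\X^T(Y - \X\hbet)$ is the same for every minimizer, so when $\|z(S^c)\|_\infty < 1$ no minimizer can have a nonzero coordinate outside $S$; every minimizer is supported in $S$. Restricted to vectors supported on $S$ the objective is strictly convex because $\Xa^T\Xa$ is positive definite, so its minimizer is unique and must coincide with $(\hbetA,0)$.

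The block computations are routine bookkeeping; the step that deserves care is the uniqueness argument --- in particular, showing that every optimal $\hbet$ has the same fitted value, hence the same dual certificate $z$, which is what upgrades ``a sign-correct solution exists'' to ``the solution is unique and given explicitly by~(\ref{R3})''.
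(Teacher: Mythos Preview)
Your proposal is correct and follows exactly the KKT-based derivation the paper has in mind; indeed, the paper does not give its own proof of this lemma but simply cites \cite{wainwright2009} and states that the conditions follow from the KKT optimality conditions. Your block decomposition of the stationarity equation into the $S$ and $S^c$ parts, the derivation of~(\ref{R3}) from the $S$-block and of~(\ref{R1}) from the $S^c$-block, and the uniqueness argument via the shared fitted vector are precisely the standard Wainwright argument the paper is invoking.
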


{\textbf{Remarks.} } As in \cite{wainwright2009}, we state sufficient conditions for
(\ref{R1}) and (\ref{R2}). Define
$$\overrightarrow{b}=\sign(\betA),$$ and denote by $e_i$ the vector
with $1$ in the $i$th position and zeroes elsewhere. Define
$$U_i=e_i^T(\Xa^T\Xa)^{-1}\left[\Xa^T\e-\lam\overrightarrow{b}\right],$$
$$V_j=X_j^T\left\{\Xa(\Xa^T\Xa)^{-1}\lam\overrightarrow{b}-\left[\Xa(\Xa^T\Xa)^{-1}\Xa^T-I)\right] {\e}\right\}.$$
By rearranging terms, it is easy to see that (\ref{R1}) holds
strictly if and only if
\begin{equation}
\mathcal{M}(V)=\left\{\max_{j\in S^c} |V_j|< \lam\right\} \label{C1}
\end{equation} holds. If we define $\minbeta=
\min_{j\in S}|\truebetaj|$ (recall that $S=\{j: \truebetaj\neq 0\}$
is the sparsity index), then the event
\begin{equation}
\mathcal{M}(U)=\left\{\max_{i \in S }|U_i|<
\minbeta\right\},\label{C2}
\end{equation}
is sufficient to guarantee that condition (\ref{R2}) holds.

With the previous lemma, we now have our main result for the Lasso estimator on data from the linear model with {\it{correlated noise terms.}}  It requires some regularity conditions.
Suppose the irrepresentable condition holds.
That is, for some constant $\eta \in (0,1]$,
\begin{equation}
\left\|\Xb^T\Xa\left(\Xa^T\Xa\right)^{-1}\overrightarrow{b}\right\|_{\infty}\leq
1-\eta. \label{ICap}
\end{equation}
In addition,
assume \begin{equation}
C_{\min} = \Lambda_{\min}\left(\Xa^T\Xa\right) > 0.
\label{min.eigen}
\end{equation}
where $\Lambda_{\min}$ denotes the minimal eigenvalue, and

\begin{equation}
\max_j\| X_j\|_2 \leq 1 \label{norm1}
\end{equation}

We also need the following Gaussian Comparison result for any mean zero Gaussian random vector.
\begin{lemma}
\label{GaussianComparison} For any mean zero Gaussian random vector
$(X_1,\ldots,X_n)$, and $t>0$, we have
\begin{equation}
P( \max_{1\leq i\leq n} |X_i| \ge t)\leq 2n
\exp\left\{-\frac{t^2}{2\max_i E(X_i^2)}\right\} \label{GCR}
\end{equation}
\end{lemma}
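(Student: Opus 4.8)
The plan is to combine a union bound over the coordinates with the standard Chernoff tail bound for a single Gaussian; crucially, this route does not require the $X_i$ to be independent, only that each marginal is Gaussian.

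First I would apply the union bound,
\[
P\left(\max_{1\le i\le n}|X_i|\ge t\right)\le\sum_{i=1}^n P\left(|X_i|\ge t\right),
\]
which uses nothing about the joint law of $(X_1,\dots,X_n)$ beyond the marginal distributions. Next, fix $i$ and set $\sigma_i^2=E(X_i^2)$. If $\sigma_i^2=0$ then $X_i=0$ almost surely and $P(|X_i|\ge t)=0$ for $t>0$, so that term drops out; otherwise $X_i/\sigma_i$ is standard normal, and the Chernoff argument gives, for every $\lambda>0$,
\[
P\left(X_i/\sigma_i\ge u\right)\le e^{-\lambda u}\,E\left[e^{\lambda X_i/\sigma_i}\right]=e^{-\lambda u+\lambda^2/2},
\]
which, optimized at $\lambda=u$, yields $P(X_i/\sigma_i\ge u)\le e^{-u^2/2}$. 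By symmetry, taking $u=t/\sigma_i$ gives $P(|X_i|\ge t)\le 2e^{-t^2/(2\sigma_i^2)}$.

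Finally I would use the uniform bound $\sigma_i^2\le\max_{1\le j\le n}E(X_j^2)$ to replace each exponent, so that every summand is at most $2\exp\{-t^2/(2\max_j E(X_j^2))\}$; summing the $n$ terms produces the bound in (\ref{GCR}). I do not expect any genuine obstacle here: the only two points requiring care are that the union bound is precisely what lets us avoid an independence hypothesis, and that coordinates with zero variance must be set aside before dividing by $\sigma_i$.
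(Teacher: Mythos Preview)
Your argument is correct: the union bound followed by the standard Chernoff tail for each marginal Gaussian (with the degenerate $\sigma_i=0$ case handled separately) gives exactly the inequality~(\ref{GCR}), and no independence is needed. The paper itself states this lemma without proof, treating it as a well-known Gaussian tail/union-bound fact, so there is nothing further to compare.
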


Define 
$$\Psi(\X, \truebeta, \lam) =\lam\left[ \frac{\eta}{\sqrt{C_{\min}} }+ \left\|\left(\Xa^T\Xa \right)^{-1}
\overrightarrow{b} \right\|_\infty \right].$$
\begin{lemma}{\label{thm:consis}}
Suppose that data $(\X,\Y)$ follow the linear model $Y = \X\truebeta + \epsilon$, with Gaussian noise $\epsilon\sim N(0,\Sigma_\e)$. Assume that regularity conditions (\ref{ICap}), $(\ref{min.eigen})$, and $(\ref{norm1})$
hold. 
If $\lam$ satisfies
$$\minbeta>  \Psi(\X, \truebeta, \lam),$$
then with probability greater than
$$1-2\p
\exp\left\{-\frac{\lam^2\eta^2}{2\Lambda_{\max}
(\Sigma_\e)}\right\},$$
the Lasso has a unique solution $\hbet$ with $\hbet =_s \truebeta$.
\end{lemma}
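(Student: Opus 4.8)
The plan is to produce, with probability at least $1-2\p\exp\{-\lam^2\eta^2/(2\Lambda_{\max}(\Sigma_\e))\}$, the two sufficient events isolated in the remarks following Lemma~\ref{KKT}: the event $\mathcal{M}(V)$ of~(\ref{C1}), which makes~(\ref{R1}) hold strictly, and the event $\mathcal{M}(U)$ of~(\ref{C2}), which makes~(\ref{R2}) hold. On their intersection Lemma~\ref{KKT} then gives a unique Lasso solution with $\hbet =_s \truebeta$. The mechanism is the same for both: each of $V_j$ and $U_i$ splits into a deterministic term, controlled by the regularity conditions~(\ref{ICap}), (\ref{min.eigen}), (\ref{norm1}), and a mean-zero Gaussian term that is a fixed linear image of $\e \sim N(0,\Sigma_\e)$; I would bound the deterministic terms by hand and the Gaussian fluctuations with Lemma~\ref{GaussianComparison}. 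The generalization from $\sigma^2 I_n$ to an arbitrary $\Sigma_\e$ costs nothing beyond replacing $\sigma^2$ by $\Lambda_{\max}(\Sigma_\e)$ in each variance bound, since the Gaussian comparison inequality only sees $\max_i E(X_i^2)$.

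For $\mathcal{M}(V)$ I would write $V_j = \lam\, X_j^T\Xa(\Xa^T\Xa)^{-1}\overrightarrow{b} + X_j^T\Pi\,\e$, where $\Pi = I - \Xa(\Xa^T\Xa)^{-1}\Xa^T$ is the orthogonal projection onto the orthogonal complement of the column space of $\Xa$. The irrepresentable condition~(\ref{ICap}) bounds the deterministic term by $\lam(1-\eta)$ in absolute value, so $\mathcal{M}(V)$ holds once $\max_{j\in S^c}|X_j^T\Pi\,\e| < \lam\eta$. Because $\|\Pi\|_2 = 1$ and $\|X_j\|_2\le 1$ by~(\ref{norm1}), the Gaussian $X_j^T\Pi\,\e$ has variance $X_j^T\Pi\Sigma_\e\Pi X_j \le \Lambda_{\max}(\Sigma_\e)$. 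Lemma~\ref{GaussianComparison}, applied to the mean-zero Gaussian vector $(X_j^T\Pi\,\e)_{j\in S^c}$ with threshold $\lam\eta$, then yields $\Prob{\mathcal{M}(V)^c}\le 2(p-s)\exp\{-\lam^2\eta^2/(2\Lambda_{\max}(\Sigma_\e))\}$.

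For $\mathcal{M}(U)$ I would write $U_i = a_i^T\e - \lam\, e_i^T(\Xa^T\Xa)^{-1}\overrightarrow{b}$ with $a_i = \Xa(\Xa^T\Xa)^{-1}e_i$, so the deterministic term is at most $\lam\|(\Xa^T\Xa)^{-1}\overrightarrow{b}\|_\infty$. The hypothesis $\minbeta > \Psi(\X,\truebeta,\lam)$ is exactly what forces $\minbeta - \lam\|(\Xa^T\Xa)^{-1}\overrightarrow{b}\|_\infty > \lam\eta/\sqrt{C_{\min}}$, so it suffices to show $\max_{i\in S}|a_i^T\e| < \lam\eta/\sqrt{C_{\min}}$. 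The variance of $a_i^T\e$ is $a_i^T\Sigma_\e a_i \le \Lambda_{\max}(\Sigma_\e)\,\|a_i\|_2^2 = \Lambda_{\max}(\Sigma_\e)\, e_i^T(\Xa^T\Xa)^{-1}e_i \le \Lambda_{\max}(\Sigma_\e)/C_{\min}$ by~(\ref{min.eigen}). Lemma~\ref{GaussianComparison} with threshold $\lam\eta/\sqrt{C_{\min}}$ then gives $\Prob{\mathcal{M}(U)^c}\le 2s\exp\{-\lam^2\eta^2/(2\Lambda_{\max}(\Sigma_\e))\}$, the factor $C_{\min}$ cancelling between threshold and variance so the exponent coincides with the one in the $V$-bound. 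A union bound over $\mathcal{M}(V)^c$ and $\mathcal{M}(U)^c$, using $2(p-s)+2s = 2p$, produces the stated probability; on that event~(\ref{R1}) holds strictly, so Lemma~\ref{KKT} gives uniqueness and $\hbet =_s \truebeta$.

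The only non-mechanical point—and the one I would be careful about—is the accounting of constants: the residual threshold for the $U$-term must be taken to be the ``$\eta/\sqrt{C_{\min}}$'' piece of $\Psi(\X,\truebeta,\lam)$, and the matching variance bound $\Lambda_{\max}(\Sigma_\e)/C_{\min}$ is precisely what makes the two exponential tails share the exponent $-\lam^2\eta^2/(2\Lambda_{\max}(\Sigma_\e))$, so that the bad-event counts $2(p-s)$ and $2s$ add exactly to $2p$ rather than to something larger. The remaining ingredients—the projection identity, $\|a_i\|_2^2 = e_i^T(\Xa^T\Xa)^{-1}e_i$, and the operator-norm bounds—are routine and I would not dwell on them.
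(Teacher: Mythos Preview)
Your proposal is correct and follows essentially the same approach as the paper's proof: the same decomposition of $V_j$ and $U_i$ into deterministic plus Gaussian parts, the same variance bounds via $\|\Pi\|_2=1$, $\|X_j\|_2\le 1$, and $e_i^T(\Xa^T\Xa)^{-1}e_i\le 1/C_{\min}$, the same choice of threshold $\lam\eta/\sqrt{C_{\min}}$ for the $U$-term so that the exponents match, and the same union bound $2(p-s)+2s=2p$.
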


\begin{proof}
This proof is divided into two parts. First we analyze the
 probability of event $\mathcal{M}(V)$, and then we
analyze the event of $\mathcal{M}(U)$.

\textbf{Analysis of $\mathcal{M}(V):$} Note from (\ref{C1}) that
$\mathcal{M}(V)$ holds if and only if    $\frac{\max_{j \in S^c}
|V_j|}{\lam}< 1$. Each random variable $V_j$ is Gaussian with mean
$$\mu_j=\lam X_j^T\Xa(\Xa^T\Xa)^{-1}\overrightarrow{b}.$$

Define
$\tilde{V}_j=X_j^T\left[I-\Xa(\Xa^T\Xa)^{-1}\Xa^T\right] {\e}$,
then $V_j=\mu_j+\tilde{V}_j$. Using the irrepresentable condition (\ref{ICap}), we have
$|\mu_j|\leq (1-\eta)\lam$ for all $j \in S^c$, from which we obtain
that
$$\frac{1}{\lam} \max_{j\in S^c} |\tilde{V}_j|< \eta \Rightarrow \frac{\max_{j \in
S^c} |V_j|}{\lam}< 1.$$
Recall $s = |S|$.  By the Gaussian comparison result (\ref{GCR}) stated in Lemma
\ref{GaussianComparison}, we have
$$P\left[\frac{1}{\lam}\max_{j\in
S^c}|\tilde{V}_j|\geq \eta\right]\leq
2(p-s)\exp\{-\frac{\lam^2\eta^2}{2\max_{j\in S^c}{E(\tilde
V_j^2)}}\}.$$ Since
$$E(\tilde{V}_j^2)=X_j^TH \Sigma_e HX_j,$$
where $H=I-\Xa(\Xa^T\Xa)^{-1}\Xa^T$ which has maximum eigenvalue
equal to $1$. An operator bound yields
$$E(\tilde{V}_j^2)\leq \Lambda_{\max}
(\Sigma_\e)\|X_j\|_2^2 \leq \Lambda_{\max}
(\Sigma_\e).$$
Therefore, 
\begin{eqnarray*}
P\left[\frac{1}{\lam}\max_j|\tilde{V}_j|\geq \eta\right]&\leq&
2(p-s)\exp\left\{-\frac{\lam^2\eta^2}{2\Lambda_{\max}
(\Sigma_\e)}\right\}.
\end{eqnarray*}
So,
\begin{eqnarray*}
P\left[\frac{1}{\lam}\max_j|V_j|< 1\right]&\geq& 1-
P\left[\frac{1}{\lam}\max_j|\tilde{V}_j|\geq \eta\right]\\&\geq&
1-2(p-s)\exp\left\{-\frac{\lam^2\eta^2}{2\Lambda_{\max}
(\Sigma_\e)}\right\}.
\end{eqnarray*}

\textbf{Analysis of $\mathcal{M}(U):$}

$$\max_i |U_i|\leq \|(\Xa^T\Xa)^{-1} \Xa^T\e\|_{\infty}+\lam\|( \Xa^T\Xa)^{-1}\overrightarrow{b}\|_{\infty}$$
Define $Z_i:=e_i^T(\Xa^T\Xa)^{-1}\Xa^T\e.$
Each $Z_i$ is a normal Gaussian with mean $0$ and variance
\begin{eqnarray*}
var(Z_i)&=&e_i^T(\Xa^T\Xa)^{-1}\Xa^T
\Sigma_\e \Xa(\Xa^T\Xa)^{-1}e_i\\
&\leq& \Lambda_{\max}(\Sigma_\e) e_i^T(\Xa^T\Xa)^{-1}\Xa^T
 \Xa(\Xa^T\Xa)^{-1}e_i\\
&\leq& \frac{\Lambda_{\max}(\Sigma_\e)}{C_{\min}}.
\end{eqnarray*}
So for any $t>0$, by (\ref{GCR})
$$P(\max_{i\in S} |Z_i|\ge t)\leq 2s
\exp\left\{-\frac{t^2
C_{\min}}{2 \Lambda_{\max}(\Sigma_\e)} \right\},$$ by taking
$t=\frac{\lam\eta}{\sqrt{C_{\min}}}$, we have
$$P(\max_{i\in S} |Z_i| \ge \frac{\lam\eta}{\sqrt{C_{\min}}})\leq 2s
\exp\left\{-\frac{\lam^2\eta^2}{2\Lambda_{\max}
(\Sigma_\e)}\right\}.$$
Recall the definition of $\Psi(\X, \truebeta, \lam) =\lam\left[ \frac{\eta}{\sqrt{C_{\min}}}+ \left\|\left(\Xa^T\Xa \right)^{-1}
\overrightarrow{b} \right\|_\infty \right]$.  We now have
$$P(\max_i |U_i|\geq \Psi(\X,\truebeta,\lam))\leq 2s
\exp\left\{-\frac{\lam^2\eta^2}{2\Lambda_{\max}
(\Sigma_\e)}\right\}.$$
By condition $\minbeta>\Psi(\X,\truebeta,\lam)$, we have
$$P(\max_i |U_i|<\minbeta)\geq 1-2s
\exp\left\{-\frac{\lam^2\eta^2}{2\Lambda_{\max}
(\Sigma_\e)}\right\}.$$
At last, we have
$$P\left[\mathcal M(V) \& \  \mathcal M(U) \right]\geq
1-2\p
\exp\left\{-\frac{\lam^2\eta^2}{2\Lambda_{\max}
(\Sigma_\e)}\right\}.$$

\end{proof}

We are now ready to prove Theorem \ref{thm:fixeddim} in Section \ref{lowdim}.  For convenience, the theorem is repeated here.

\begin{theorem}
Suppose that data $(\X,\Y)$ follows the linear model described in Equation
(\ref{regeq}) with  iid normal noises $\e \sim N(0, \sigma^2I_n)$. Define the singular value decomposition of $X$ as $\X = UDV^T$.  Suppose that $n\geq p$ and $\X$ has rank $p$. We further assume that $\Lambda_{\min}(\frac{1}{n}X^TX) \geq \tilde C_{\min}>0$. Define the \textbf{Puffer Transformation},
$\pre  = UD^{-1}U^T.$ Let $\tilde \X = F \X, \tilde Y = F Y$ and $\tilde \e = F \e$. Define

\[\tilde \beta(\lambda) = \arg\min_b \frac{1}{2} \|\tilde Y - \tilde \X b\|_2^2 + \lambda  \| b\|_1.\]

If $\min_{j\in S}|\truebeta_j| \geq 2  \lambda$, 
then with probability greater than
$$1-2p \exp\left\{-\frac{n \lam^2\tilde C_{\min}}{2\sigma^2}\right\} $$
we have $\tilde \beta(\lambda) =_s \truebeta$.
\end{theorem}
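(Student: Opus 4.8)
The plan is to deduce the statement from Lemma \ref{thm:consis} applied to the transformed data $(\tilde\X,\tilde Y)$, which satisfies the linear model $\tilde Y = \tilde\X\truebeta + \tilde\e$ with correlated Gaussian noise $\tilde\e = \pre\e \sim N(0,\tilde\Sigma)$, $\tilde\Sigma = \sigma^2 UD^{-2}U^T$. The first step is to record the geometry created by the Puffer Transformation: since $n\ge p$ and $\X$ has rank $p$, the factor $V$ in $\X = UDV^T$ is a $p\times p$ orthogonal matrix, so $\tilde\X = \pre\X = UV^T$ and $\tilde\X^T\tilde\X = VU^TUV^T = VV^T = I_p$. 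Thus the columns of $\tilde\X$ are orthonormal. Consequently each column of $\tilde\X$ has $\ell_2$ norm exactly one, so (\ref{norm1}) holds; $\tilde\X(S)^T\tilde\X(S) = I_s$, so $C_{\min} = \Lambda_{\min}(\tilde\X(S)^T\tilde\X(S)) = 1 > 0$ and (\ref{min.eigen}) holds; and $\tilde\X(S^c)^T\tilde\X(S) = 0$, so the irrepresentable condition (\ref{ICap}) holds trivially with $\eta = 1$.

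Second, I would evaluate the quantity $\Psi$ for the transformed design. With $\eta = 1$, $C_{\min} = 1$, and $(\tilde\X(S)^T\tilde\X(S))^{-1} = I_s$, and using that $\overrightarrow{b} = \sign(\truebeta(S))$ has entries $\pm 1$, one gets $\Psi(\tilde\X,\truebeta,\lam) = \lam\bigl[\,1 + \|\overrightarrow{b}\|_\infty\,\bigr] = 2\lam$. Hence the hypothesis $\min_{j\in S}|\truebeta_j| \ge 2\lam$ is exactly (up to the boundary case $\minbeta = 2\lam$, which can be absorbed either by taking $\eta$ slightly below $1$ or by noting that the event $\{\max_{i\in S}|U_i| = \minbeta\}$ has probability zero for continuous noise) the condition $\minbeta > \Psi(\tilde\X,\truebeta,\lam)$ required by Lemma \ref{thm:consis}.

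Third, I would convert the probability bound. Lemma \ref{thm:consis} yields $\tilde\beta(\lambda) =_s \truebeta$ with probability at least $1 - 2p\exp\{-\lam^2\eta^2/(2\Lambda_{\max}(\tilde\Sigma))\} = 1 - 2p\exp\{-\lam^2/(2\Lambda_{\max}(\tilde\Sigma))\}$, so it remains to control $\Lambda_{\max}(\tilde\Sigma)$. Since $X^TX = VD^2V^T$ with $V$ orthogonal, the eigenvalues of $X^TX$ are precisely the $D_{ii}^2$, so $\min_i D_{ii}^2 = \Lambda_{\min}(X^TX) \ge n\tilde C_{\min}$ by hypothesis; and $\Lambda_{\max}(\tilde\Sigma) = \sigma^2\Lambda_{\max}(UD^{-2}U^T) = \sigma^2/\min_i D_{ii}^2 \le \sigma^2/(n\tilde C_{\min})$. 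Substituting gives the claimed bound $1 - 2p\exp\{-n\lam^2\tilde C_{\min}/(2\sigma^2)\}$.

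There is no genuine obstacle once Lemma \ref{thm:consis} is in hand: the Puffer Transformation was designed exactly so that $\tilde\X$ has orthonormal columns, which trivializes both the irrepresentable condition and the minimal-eigenvalue condition. The only point demanding a little care is the noise — preconditioning inflates $\mathrm{cov}(\tilde\e)$ by the factor $D^{-2}$, so one must carry $\Lambda_{\max}(\tilde\Sigma)$ through the lemma and then use the lower bound $\Lambda_{\min}(\tfrac1n X^TX) \ge \tilde C_{\min}$ on the singular values of $\X$ to tame it. This is precisely where the hypothesis on $\tilde C_{\min}$ enters and why highly correlated columns (small $\tilde C_{\min}$) degrade the bound.
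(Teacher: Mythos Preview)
Your proposal is correct and follows essentially the same route as the paper: apply Lemma \ref{thm:consis} to $(\tilde\X,\tilde Y)$, use $\tilde\X^T\tilde\X = I_p$ to get $\eta=1$, $C_{\min}=1$, and $\Psi=2\lambda$, and then bound $\Lambda_{\max}(\tilde\Sigma)=\sigma^2/\min_i D_{ii}^2$ via $\min_i D_{ii}^2 \ge n\tilde C_{\min}$. Your extra remark on the boundary case $\minbeta = 2\lambda$ is a nice point the paper silently passes over.
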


\begin{proof}
Data after transformation $(\tilde \X, \tilde Y)$ follows the following linear model:
\[\tilde Y = \tilde \X \truebeta + \tilde e,\] with $\tilde e$ having co-variance matrix  $\Sigma_{\tilde e} = \sigma^2 F^TF = \sigma^2 UD^{-2} U^T$.

Since \[\tilde \X' \tilde \X = \X'F'F\X = [VDU^T] [U D^{-2} U^T] [UDV^T] = I_{p\times p}.\] So the irrepresentable condition (\ref{ICap}) holds with $\eta = 1$.
To apply Lemma \ref{thm:consis}, we first calculate $\Psi(\tilde \X, \truebeta, \lam)$.
\begin{eqnarray*}
\Psi(\tilde \X, \truebeta, \lam) =\lam\left[ \frac{\eta}{\sqrt{C_{\min}}\max_{j\in S^c} \|\tilde X_j\|_2}+ \left\|\left(\tilde \Xa^T\tilde \Xa \right)^{-1}
\overrightarrow{b} \right\|_\infty \right].
\end{eqnarray*}

Notice that 
\[\tilde \X' \tilde \X = I_{p\times p}.\]
So, $C_{\min} = 1$, $\|\tilde X_j\|_2 = 1$ and
$\left\|\left(\tilde \Xa^T\tilde \Xa \right)^{-1}
\overrightarrow{b} \right\|_\infty = {1}$ and, consequently,
\begin{eqnarray*}
\Psi(\tilde \X, \truebeta, \lam) &=&\lam\left[ \frac{\eta}{\sqrt{C_{\min}}}+ \left\|\left(\tilde \Xa^T\tilde \Xa \right)^{-1}
\overrightarrow{b} \right\|_\infty \right]\\
&=& 2\lambda.
\end{eqnarray*}

Now we calculate the lower bound probability:
\begin{eqnarray*}1-2\p
\exp\left\{-\frac{\lam^2\eta^2}{2\Lambda_{\max}
(\Sigma_{\tilde \e})}\right\}
\end{eqnarray*}

Notice that $\Sigma_{\tilde \e} = \sigma^2 UD^{-2} U'$. So $\Lambda_{\max}
(\Sigma_{\tilde \e}) = \frac{\sigma^2}{(\min_i D_{ii})^2}$. From $\Lambda_{\min}(\frac{1}{n}X^TX) \geq \tilde C_{\min}$, we see that
$\tilde C_{\min} \leq \Lambda_{\min}(\frac{1}{n}X^TX) = \frac{1}{n}  \Lambda_{\min}(VD^2V^T) = \frac{1}{n} \min_i (D_{ii}^2)$.
This is to say $\min_{i} D_{ii}^2 \geq n \tilde C_{\min}$.

\begin{eqnarray*}1-2\p
\exp\left\{-\frac{\lam^2\eta^2}{2[\Lambda_{\max}
(\Sigma_{\tilde \e})]}\right\}\\
=1-2p \exp\left\{-\frac{\lam^2\min_i(D_{ii}^2)}{2 \sigma^2 }\right\}\\
\geq 1-2p \exp\left\{-\frac{n\lam^2\tilde C_{\min}}{2\sigma^2}\right\} 
\end{eqnarray*}
\end{proof}

%
%

Next, we prove Theorem \ref{highdimthm} in Section \ref{highdim}.  The theorem is repeated here for convenience. 
\begin{theorem}
Suppose that data $(\X,\Y)$ follows the linear model described at
(\ref{regeq}) with  iid normal noises $\e \sim N(0, \sigma^2I_n)$. Define the singular value decomposition of $X$ as $\X = UDV^T$.  Suppose that $p\geq n$. We further assume that $\Lambda_{\min}(\frac{1}{n}\Xa^T\Xa) \geq \tilde C_{\min}$  and ${\min_{i}}(D^2_{ii}) \geq  p d_{\min}$ with  constants $\tilde C_{\min}>0$ and $d_{\min}>0$. For $\pre  = UD^{-1}U^T,$ define $\tilde Y = F Y$ and $\tilde \X = F \X$. Define

\[\tilde \beta(\lambda) = \arg\min_b \frac{1}{2} \|\tilde Y - \tilde \X b\|_2^2 + \lambda  \| b\|_1.\]

Under the following three conditions, 
\begin{enumerate}
\item $\left\|\tilde \X(S^c)^T\tilde\X(S)\left(\tilde\X(S)^T\tilde\X(S)\right)^{-1}\overrightarrow{b}\right\|_{\infty}\leq 1-\eta, $
\item $\Lambda_{\min}\left(\tilde \X(S)^T\tilde \X(S)\right) \geq \frac{n}{cp}$
\item $\min_{j\in S}|\truebetaj| \geq 2\lam \sqrt{scp/n} $ 
\end{enumerate}
where $c$ is some constant; we have 
\begin{equation} \label{probbound}
P\left(\tilde \beta(\lambda) =_s \truebeta\right) > 1-2\p \exp\left\{-\frac{p\lam^2\eta^2d_{\min}}{2\sigma^2 }\right\} .
\end{equation}
\end{theorem}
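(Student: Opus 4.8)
The plan is to derive Theorem \ref{highdimthm} from the general correlated-noise result Lemma \ref{thm:consis}, applied to the preconditioned data $(\tilde\X,\tilde Y)$, in the same way that Theorem \ref{thm:fixeddim} was obtained in the low-dimensional case. First I would rewrite the model: left-multiplying $Y=\X\truebeta+\e$ by $\pre=UD^{-1}U^T$ and using $U^TU=I$ gives $\tilde Y=\tilde\X\truebeta+\tilde\e$ with $\tilde\X=UV^T$ and $\tilde\e=\pre\e\sim N(0,\Sigma_{\tilde\e})$, $\Sigma_{\tilde\e}=\sigma^2UD^{-2}U^T$. The nonzero eigenvalues of $\Sigma_{\tilde\e}$ are $\sigma^2/D_{ii}^2$, so the assumption $\min_i D_{ii}^2\ge p\,d_{\min}$ gives $\Lambda_{\max}(\Sigma_{\tilde\e})\le\sigma^2/(p\,d_{\min})$; this is the only place $d_{\min}$ enters and it is what produces the factor $p\,d_{\min}$ in the exponent of (\ref{probbound}).

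Next I would check the structural hypotheses of Lemma \ref{thm:consis} for $\tilde\X$. The irrepresentable condition (\ref{ICap}) is exactly Condition (1). The minimum-eigenvalue requirement (\ref{min.eigen}) holds with $C_{\min}=\Lambda_{\min}(\tilde\X(S)^T\tilde\X(S))\ge n/(cp)>0$ by Condition (2). The column-length normalization (\ref{norm1}) is automatic: since $V^TV=I$, the matrix $\tilde\X^T\tilde\X=VV^T$ is an orthogonal projection, so $\|\tilde X_j\|_2^2=(VV^T)_{jj}\in[0,1]$ and hence $\max_j\|\tilde X_j\|_2\le 1$.

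It then remains to verify the signal-strength condition $\minbeta>\Psi(\tilde\X,\truebeta,\lambda)$, where $\Psi(\tilde\X,\truebeta,\lambda)=\lambda\big[\eta/\sqrt{C_{\min}}+\|(\tilde\X(S)^T\tilde\X(S))^{-1}\overrightarrow{b}\|_\infty\big]$. From $C_{\min}\ge n/(cp)$ and $\eta\le 1$, the first bracketed term is at most $\sqrt{cp/n}\le\sqrt{scp/n}$. For the second term I would start from $\|(\tilde\X(S)^T\tilde\X(S))^{-1}\overrightarrow{b}\|_\infty\le\|(\tilde\X(S)^T\tilde\X(S))^{-1}\overrightarrow{b}\|_2\le\|\overrightarrow{b}\|_2/C_{\min}=\sqrt{s}/C_{\min}$ and then sharpen it using that, after preconditioning, the columns of $\tilde\X(S)$ are nearly orthogonal with common squared length $\approx n/p$, so that $\tilde\X(S)^T\tilde\X(S)$ concentrates around $(n/p)I_s$ and its inverse around $(p/n)I_s$ (the content of Theorem \ref{thm:stiefel}); combining the two contributions one obtains $\Psi(\tilde\X,\truebeta,\lambda)\le 2\lambda\sqrt{scp/n}$, so Condition (3) forces $\minbeta>\Psi(\tilde\X,\truebeta,\lambda)$. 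Lemma \ref{thm:consis} then yields a unique Lasso solution with $\tilde\beta(\lambda)=_s\truebeta$ with probability at least $1-2p\exp\{-\lambda^2\eta^2/(2\Lambda_{\max}(\Sigma_{\tilde\e}))\}$, and substituting $\Lambda_{\max}(\Sigma_{\tilde\e})\le\sigma^2/(p\,d_{\min})$ gives exactly (\ref{probbound}).

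The main obstacle is this last estimate: controlling $\|(\tilde\X(S)^T\tilde\X(S))^{-1}\overrightarrow{b}\|_\infty$ precisely enough to match the $\sqrt{scp/n}$ scaling in Condition (3). The naive operator-norm bound $\sqrt{s}/C_{\min}=\sqrt{s}\,cp/n$ is wasteful when $p/n$ is large, so one must genuinely exploit the near-diagonal structure of the preconditioned Gram matrix --- peculiar to the Stiefel-manifold geometry of $\tilde\X$ --- so that $(\tilde\X(S)^T\tilde\X(S))^{-1}$ behaves almost like the scalar $p/n$ and the $\ell_\infty$ norm does not pay an extra $\sqrt{s}$. Everything else is a routine transcription of the primal-dual (KKT) argument already packaged inside Lemma \ref{thm:consis}.
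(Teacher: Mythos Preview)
Your overall architecture --- apply Lemma \ref{thm:consis} to the preconditioned system $(\tilde\X,\tilde Y)$, verify the irrepresentable, minimum-eigenvalue and column-norm hypotheses, bound $\Psi$, and then read off the probability via $\Lambda_{\max}(\Sigma_{\tilde\e})\le\sigma^2/(p\,d_{\min})$ --- is exactly the paper's proof. Your check that $\max_j\|\tilde X_j\|_2\le1$ (because $\tilde\X^T\tilde\X=VV^T$ is an orthogonal projection) is a detail the paper leaves implicit.

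The only divergence is the bound on $\left\|(\tilde\X(S)^T\tilde\X(S))^{-1}\overrightarrow{b}\right\|_\infty$. The paper does \emph{not} invoke any Stiefel-manifold concentration here; it simply writes, using only Condition~(2),
\[
\left\|(\tilde\X(S)^T\tilde\X(S))^{-1}\overrightarrow{b}\right\|_\infty \le \sqrt{\frac{s}{C_{\min}}} \le \sqrt{\frac{scp}{n}},
\]
and adds this to $\eta/\sqrt{C_{\min}}\le\sqrt{cp/n}$ to obtain $\Psi\le 2\lam\sqrt{scp/n}$. You are right that the crude operator-norm route gives only $\sqrt{s}/C_{\min}$, which is larger when $C_{\min}<1$; the paper asserts the sharper $\sqrt{s/C_{\min}}$ without further justification. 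Whatever one makes of that step, your proposed remedy is not available in this theorem: Theorem~\ref{thm:stiefel} is a probabilistic statement about a matrix drawn uniformly from $V(n,p)$, whereas the present result imposes only the deterministic hypotheses (1)--(3) on the particular realized $\tilde\X$, with no distributional assumption on $\X$. You cannot import distributional concentration to control a fixed instance. To match the paper, take the displayed inequality as given; to make the argument fully self-contained you would instead have to strengthen Condition~(2) (e.g.\ bound $\|(\tilde\X(S)^T\tilde\X(S))^{-1}\|_{\infty\to\infty}$ directly) or weaken Condition~(3) to $\min_{j\in S}|\truebetaj|\ge\lam(\eta/\sqrt{C_{\min}}+\sqrt{s}/C_{\min})$.
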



\begin{proof}
After transformation, $(\tilde \X, \tilde Y)$ follows the following linear model:
\[\tilde Y = \tilde \X \truebeta + \tilde e,\] with $\tilde e$ having co-variance matrix  $\Sigma_{\tilde e} = \sigma^2 F^TF = \sigma^2 UD^{-2} U^T$.

To apply Lemma \ref{thm:consis}, first calculate $\Psi(\tilde \X, \truebeta, \lam)$.
\begin{eqnarray*}
\Psi(\tilde \X, \truebeta, \lam) =\lam\left[ \frac{\eta}{\sqrt{C_{\min}}}+ \left\|\left(\tilde \Xa^T\tilde \Xa \right)^{-1}
\overrightarrow{b} \right\|_\infty \right],
\end{eqnarray*}
where $ C_{\min} = \Lambda_{\min}\left(\tilde \Xa^T\tilde \Xa\right) $.
By condition (3), we have $C_{\min} \geq \frac{n}{cp}$.
\[\left\|\left(\tilde \Xa^T\tilde \Xa \right)^{-1}
\overrightarrow{b} \right\|_\infty \leq \sqrt{\frac{s}{C_{\min}}} \leq \sqrt{scp/n}\] 
and, consequently,
\begin{eqnarray*}
\Psi(\tilde \X, \truebeta, \lam) &=&\lam\left[ \frac{\eta}{\sqrt{C_{\min}}}+ \left\|\left(\tilde \Xa^T\tilde \Xa \right)^{-1}
\overrightarrow{b} \right\|_\infty \right]\\
&\leq& \lam\left[ \frac{\eta\sqrt{cp}}{\sqrt{n} }+  \sqrt{scp/n} \right]\\
&\leq& 2\lam \sqrt{scp/n} .
\end{eqnarray*}

Now we calculate the lower bound probability:
\begin{eqnarray*}1-2\p
\exp\left\{-\frac{\lam^2\eta^2}{2\Lambda_{\max}
(\Sigma_{\tilde \e})}\right\}
\end{eqnarray*}

Notice that $\Sigma_{\tilde \e} = \sigma^2 UD^{-2} U'$. So $\Lambda_{\max}
(\Sigma_{\tilde \e}) = \frac{\sigma^2}{(\min_i D_{ii})^2}$. 

\begin{eqnarray*}1-2\p
\exp\left\{-\frac{\lam^2\eta^2}{2\Lambda_{\max}
(\Sigma_{\tilde \e})}\right\}\\
=1-2\p
\exp\left\{-\frac{\lam^2\eta^2\min_{i} D_{ii}^2}{2\sigma^2 }\right\}\\
\geq 1-2\p
\exp\left\{-\frac{p\lam^2\eta^2d_{\min}}{2\sigma^2 }\right\}\\
\end{eqnarray*}
\end{proof}

\section{Uniform distribution on the Stiefel manifold}
To prove Theorem \ref{unifTheorem}, we first present some results related to Beta distributions.

\subsection{Beta distribution}

The density function for the Beta distribution with shape parameters $\alpha>0$ and $\beta>0$ is 
\[f(x) = cx^{\alpha-1}(1-x)^{\beta-1},\]
for $x\in(0,1)$.  $c$ is a normalization constant.  If $X$ follows a Beta distribution with parameters $(\alpha,\beta)$, it is denoted $X\sim Beta(\alpha,\beta)$.

\begin{prop}
If $X\sim Beta(\alpha,\beta)$, then
\[E(X) = \frac{\alpha}{\alpha+\beta} \ \mbox{ and } \ var(X) = \frac{\alpha\beta}{(\alpha+\beta)^2(\alpha+\beta+1)}.\]
\end{prop}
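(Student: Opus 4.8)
The plan is to obtain both moments directly from the integral representation of the Beta density and then assemble the variance. The only external facts I need are the Beta integral identity $\int_0^1 x^{a-1}(1-x)^{b-1}\,dx = \Gamma(a)\Gamma(b)/\Gamma(a+b)$, valid for $a,b>0$, together with the functional equation $\Gamma(t+1) = t\,\Gamma(t)$.

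First I would pin down the normalizing constant. Since $f$ is a density, it integrates to one, so
\[
c = \left(\int_0^1 x^{\alpha-1}(1-x)^{\beta-1}\,dx\right)^{-1} = \frac{\Gamma(\alpha+\beta)}{\Gamma(\alpha)\,\Gamma(\beta)}.
\]
Next I would compute the raw moments $E(X^k)$ for $k=1$ and $k=2$, each of which is again a Beta integral:
\[
E(X^k) = c\int_0^1 x^{\alpha+k-1}(1-x)^{\beta-1}\,dx = \frac{\Gamma(\alpha+k)\,\Gamma(\alpha+\beta)}{\Gamma(\alpha)\,\Gamma(\alpha+\beta+k)}.
\]
Applying $\Gamma(t+1)=t\,\Gamma(t)$ once gives $E(X) = \alpha/(\alpha+\beta)$, and applying it twice gives $E(X^2) = \alpha(\alpha+1)/\bigl((\alpha+\beta)(\alpha+\beta+1)\bigr)$.

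Finally, I would use $\mathrm{var}(X) = E(X^2) - (E(X))^2$. Placing both terms over the common denominator $(\alpha+\beta)^2(\alpha+\beta+1)$, the numerator becomes $\alpha(\alpha+1)(\alpha+\beta) - \alpha^2(\alpha+\beta+1)$, which expands and cancels down to $\alpha\beta$, yielding the claimed variance. There is no conceptual obstacle here; the one place to be careful is that final numerator cancellation, where it is easy to drop or mis-sign a term, so I would carry it out explicitly.
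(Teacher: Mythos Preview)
Your argument is correct and is the standard derivation via the Beta integral and the functional equation for the Gamma function. The paper itself does not supply a proof of this proposition; it is stated without proof as a well-known fact about the Beta distribution, so there is no alternative approach to compare against.
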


The next two inequalities for the $\chi^2$ distributions can be found from \cite{laurent2000adaptive} (pp.\ 1352).

\begin{lemma}
Let $X$ be a $\chi^2$ distribution with $n$ degrees of freedom.  Then, for any positive $x$, we have
\[P(X - n \geq 2\sqrt{nx} + 2x) \leq \exp(-x),\]
\[P(X-n \leq -2\sqrt{nx}) \leq \exp(-x).\]
\end{lemma}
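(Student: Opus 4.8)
The plan is to prove both tail bounds by the Cram\'er--Chernoff method. Represent $X = \sum_{i=1}^n Z_i^2$ with $Z_1,\dots,Z_n$ independent standard normal variables, so that $X - n = \sum_{i=1}^n (Z_i^2 - 1)$ is a sum of independent, centered, identically distributed terms. The only genuine inputs are the exact moment generating function of a single $\chi^2_1$ variable and two elementary scalar inequalities for $\log(1\pm 2s)$; everything else is Markov's inequality applied to $e^{s(X-n)}$ followed by a one-dimensional optimization over $s$. The upper and lower tails are handled separately because the cumulant generating function behaves very differently for $s>0$ (where it blows up at $s=1/2$) and for $s<0$ (where it stays sub-Gaussian).

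For the \emph{upper} tail I would first record the cumulant generating function. For $0<s<1/2$ one has $E[e^{s(Z_i^2-1)}] = e^{-s}(1-2s)^{-1/2}$, hence $\psi(s) := \log E[e^{s(X-n)}] = n\bigl(-s - \tfrac12\log(1-2s)\bigr)$. The key algebraic step is the scalar bound $-s-\tfrac12\log(1-2s)\le s^2/(1-2s)$, obtained by expanding $-\log(1-2s)=\sum_{k\ge1}(2s)^k/k$, cancelling the linear term, and using $\sum_{k\ge2}(2s)^k/k \le \tfrac12\sum_{k\ge2}(2s)^k = 2s^2/(1-2s)$. This yields the Bernstein-type envelope $\psi(s)\le ns^2/(1-2s)$, so that for every $y>0$,
\[P(X-n\ge y) \le \inf_{0<s<1/2}\exp\!\left(\frac{ns^2}{1-2s}-sy\right).\]
It then remains to check that the right-hand side is at most $e^{-x}$ when $y = 2\sqrt{nx}+2x$. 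This is the standard Bernstein computation: the envelope has the form $\tfrac{v}{2}\,s^2/(1-cs)$ with variance factor $v=2n$ and scale $c=2$, and the associated Cram\'er transform obeys $\psi^*(\sqrt{2vx}+cx)\ge x$, with $\sqrt{2vx}+cx = 2\sqrt{nx}+2x$.

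For the \emph{lower} tail the argument is cleaner, since there is no singularity. Applying the Chernoff bound to $n-X$ with $s>0$ gives $\log E[e^{s(n-X)}] = n\bigl(s-\tfrac12\log(1+2s)\bigr)$, and the elementary inequality $\log(1+2s)\ge 2s-2s^2$ (which follows from $\log(1+w)\ge w-w^2/2$ for $w\ge0$) yields $s-\tfrac12\log(1+2s)\le s^2$. Hence
\[P(n-X\ge u) \le \inf_{s>0}\exp\!\left(ns^2-su\right) = \exp\!\left(-\frac{u^2}{4n}\right),\]
the infimum being attained at $s=u/(2n)$. Taking $u=2\sqrt{nx}$ makes the exponent exactly $-x$, which is the claimed bound.

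I expect the main obstacle to be the upper tail. The factor $1/(1-2s)$ means the optimizing $s$ has no clean closed form, and one must balance the variance-driven term $2\sqrt{nx}$ against the heavy-tail term $2x$ simultaneously rather than treating the deviation as purely sub-Gaussian. The honest way to close this is either to invoke the Bernstein Legendre-transform inequality $\psi^*(\sqrt{2vx}+cx)\ge x$ for the envelope above, or to verify a suitable explicit near-optimal choice of $s$ and simplify. By contrast, the lower tail, the single-variable moment generating function, and the two logarithmic inequalities are all routine.
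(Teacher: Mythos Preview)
The paper does not supply a proof of this lemma at all; it simply cites Laurent and Massart (2000), p.~1352, and moves on. Your proposal is therefore strictly more than what the paper does, and it is correct: the Cram\'er--Chernoff argument you outline, combined with the exact $\chi^2_1$ moment generating function and the two elementary bounds on $\log(1\pm 2s)$, is essentially the Laurent--Massart derivation.

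One small sharpening of your upper-tail discussion: there is no need to appeal to an abstract ``Bernstein Legendre-transform inequality.'' After the envelope $\psi(s)\le ns^2/(1-2s)$, substitute $r=1-2s$ and minimize $\frac{n(1-r)^2}{4r}-\frac{(1-r)y}{2}$ over $r\in(0,1)$. The optimum is at $r=\sqrt{n/(n+2y)}$, and when $y=2\sqrt{nx}+2x$ one has $n+2y=(\sqrt n+2\sqrt x)^2$; plugging this $r$ back in gives the value $-x$ exactly, not merely as a bound. This closes the one place you flagged as a potential obstacle. The lower-tail argument is exactly as you wrote it.
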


By taking $x = \sqrt{n}$, from the above inequality we immediately have
\begin{corollary}
\label{coro:chi2}
Let $X$ be a $\chi^2$ distribution with $n$ degrees of freedom. There exists a constant $c$ such that for a large enough $n$, 
\[P(|X - n| \geq c n^{3/4})\leq \exp\left(-n^{1/2}\right).\]
\end{corollary}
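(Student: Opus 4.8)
The plan is to obtain the corollary as an immediate consequence of the two-sided $\chi^2$ deviation bound in the preceding lemma, applied with the parameter choice $x = \sqrt{n} + \ln 2$ and followed by a union bound over the upper and lower tails. With this choice the right-hand side $\exp(-x)$ of each of the two inequalities becomes exactly $\frac{1}{2}\exp(-\sqrt{n})$, which is precisely what is needed to absorb the factor of two produced by the union bound.

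First I would record the two instantiated bounds:
\[
P\!\left(X - n \ge 2\sqrt{n(\sqrt{n} + \ln 2)} + 2(\sqrt{n} + \ln 2)\right) \le \tfrac12 e^{-\sqrt{n}},
\qquad
P\!\left(X - n \le -2\sqrt{n(\sqrt{n} + \ln 2)}\right) \le \tfrac12 e^{-\sqrt{n}}.
\]
Next I would bound the two thresholds. Since $\sqrt{n(\sqrt{n} + \ln 2)} = n^{3/4}\sqrt{1 + (\ln 2)\,n^{-1/2}} = n^{3/4}\bigl(1 + o(1)\bigr)$, the positive threshold equals $2n^{3/4}(1+o(1)) + 2n^{1/2} + 2\ln 2$ and the magnitude of the negative threshold equals $2n^{3/4}(1+o(1))$; in both cases the leading term is $2n^{3/4}$. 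Hence, fixing any constant $c > 2$ (for instance $c = 3$), for all sufficiently large $n$ both thresholds are at most $c\,n^{3/4}$, so that $\{X - n \ge c\,n^{3/4}\}$ is contained in the first event above and $\{X - n \le -c\,n^{3/4}\}$ in the second. A union bound then yields
\[
P\!\left(|X - n| \ge c\,n^{3/4}\right) \le \tfrac12 e^{-\sqrt{n}} + \tfrac12 e^{-\sqrt{n}} = e^{-\sqrt{n}}
\]
for all $n$ beyond the ($c$-dependent) threshold implicit in the $o(1)$ estimates, which is exactly the asserted inequality.

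This argument involves no real obstacle; the only point to watch is the bookkeeping of the constants. A careless application with $x = \sqrt{n}$ would leave the spurious factor of two from the union bound (yielding $2e^{-\sqrt{n}}$ rather than $e^{-\sqrt{n}}$), so one must either inflate $x$ by $\ln 2$ as above or, equivalently, observe that $2e^{-\sqrt{n}} = e^{-\sqrt{n} + \ln 2}$ and enlarge $c$ slightly to swallow the lower-order corrections. In particular, the corollary holds with any fixed $c$ strictly larger than $2$.
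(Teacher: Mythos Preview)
Your proof is correct and follows the same approach as the paper, which simply sets $x = \sqrt{n}$ in the preceding lemma and reads off the result. You are in fact more careful than the paper: by shifting to $x = \sqrt{n} + \ln 2$ you cleanly absorb the factor of two from the union bound, whereas the paper's one-line derivation with $x = \sqrt{n}$ tacitly ignores that factor (or, equivalently, absorbs it into the unspecified constant $c$ as you note in your final remark).
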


Corollary \ref{coro:chi2} says that $\chi^2(n) = n+o(n)$ with high probability.  These large deviation results can give concentration inequalities for  Beta distributions. This is because a Beta distribution can be expressed via $\chi^2$ distributions.
\begin{lemma}
Suppose that $X\sim \chi^2(\alpha)$, $Y\sim \chi^2(\beta)$ and $X$ is independent of $Y$, then
$\frac{X}{X+Y}\sim Beta(\frac{\alpha}{2},\frac{\beta}{2})$.
\end{lemma}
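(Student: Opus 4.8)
The plan is to establish this classical identity by a two–dimensional change of variables; this route simultaneously produces the marginal Beta law for $X/(X+Y)$ and the (standard, though not strictly needed below) independence of $X/(X+Y)$ from $X+Y$. First I would record the joint density of $(X,Y)$ on $(0,\infty)^2$: since $X\sim\chi^2(\alpha)$ and $Y\sim\chi^2(\beta)$ are independent, it factors as
\[
f_{X,Y}(x,y)=\frac{x^{\alpha/2-1}e^{-x/2}}{2^{\alpha/2}\Gamma(\alpha/2)}\cdot\frac{y^{\beta/2-1}e^{-y/2}}{2^{\beta/2}\Gamma(\beta/2)}.
\]

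Next I would apply the map $U=X/(X+Y)$, $W=X+Y$, with inverse $X=UW$, $Y=(1-U)W$, which carries $(0,1)\times(0,\infty)$ onto $(0,\infty)^2$ and has Jacobian $\bigl|\partial(x,y)/\partial(u,w)\bigr|=w$. Substituting $x=uw$, $y=(1-u)w$ gives
\[
x^{\alpha/2-1}y^{\beta/2-1}=u^{\alpha/2-1}(1-u)^{\beta/2-1}\,w^{(\alpha+\beta)/2-2},\qquad e^{-(x+y)/2}=e^{-w/2},
\]
so, after multiplying by the Jacobian $w$, the joint density of $(U,W)$ is
\[
f_{U,W}(u,w)=\frac{1}{2^{(\alpha+\beta)/2}\Gamma(\alpha/2)\Gamma(\beta/2)}\;u^{\alpha/2-1}(1-u)^{\beta/2-1}\;w^{(\alpha+\beta)/2-1}e^{-w/2},
\]
for $u\in(0,1)$, $w>0$. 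This density factors as a function of $u$ times a function of $w$; hence $U$ and $W$ are independent, $W\sim\chi^2(\alpha+\beta)$, and $U$ has density proportional to $u^{\alpha/2-1}(1-u)^{\beta/2-1}$ on $(0,1)$, i.e.\ $U\sim Beta(\alpha/2,\beta/2)$. In particular, the marginal law of $X/(X+Y)$ is $Beta(\alpha/2,\beta/2)$, which is the assertion.

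The only point needing a little care is the bookkeeping of the normalizing constants: to name the marginal of $U$ one must verify that, after integrating $w$ out, the coefficient of $u^{\alpha/2-1}(1-u)^{\beta/2-1}$ equals $1/B(\alpha/2,\beta/2)$. This follows either from $\int_0^\infty w^{(\alpha+\beta)/2-1}e^{-w/2}\,dw=2^{(\alpha+\beta)/2}\Gamma\bigl((\alpha+\beta)/2\bigr)$ together with the requirement that $f_{U,W}$ integrate to $1$, or directly from the Beta–Gamma identity $B(a,b)=\Gamma(a)\Gamma(b)/\Gamma(a+b)$. There is no real obstacle here; the computation is routine, and the only things one must be careful about are the Jacobian factor $w$ and the resulting exponent of $w$. (One could alternatively match the moments $E\bigl[(X/(X+Y))^k\bigr]$ against those of $Beta(\alpha/2,\beta/2)$, but that argument is messier and still relies on the same Gamma-function identities.)
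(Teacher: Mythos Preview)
Your argument is correct and is the standard derivation via the change of variables $(U,W)=\bigl(X/(X+Y),\,X+Y\bigr)$; the Jacobian, the factorization of the $(U,W)$ density, and the identification of the Beta normalizing constant are all handled properly. The paper itself does not supply a proof of this lemma at all---it is quoted as a well-known fact and used only to transfer the $\chi^2$ concentration bounds of Corollary~\ref{coro:chi2} to the Beta distribution in Theorem~\ref{thm:beta}. So there is nothing to compare: you have filled in a standard proof where the paper simply cites the result.
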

With the relationship constructed between a Beta distribution and $\chi^2$ distributions, we can have the following 
 inequalities.
\begin{theorem}
\label{thm:beta}
Suppose $Z\sim Beta(\frac{n}{2},\frac{m}{2})$. When both $m$ and $n$ are big enough, there exists some constant $c$,
 
 \begin{eqnarray*}
\Prob{Z \geq \frac{n+cn^{3/4}}{m+n - c(m+n)^{3/4}} } &\leq & \exp\{-n^{1/2} \}+ \exp\{-(m+n)^{1/2}\} 
 \end{eqnarray*}

 \begin{eqnarray*}
\Prob{ Z \leq \frac{n-cn^{3/4}}{m+n + c(m+n)^{3/4}} } 
&\leq & \exp\{-n^{1/2} \}+ \exp\{-(m+n)^{1/2}\} 
 \end{eqnarray*}
 
\end{theorem}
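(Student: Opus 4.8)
The plan is to reduce both tail bounds to the one-sided $\chi^2$ concentration recorded in Corollary \ref{coro:chi2}, by using the standard representation of the Beta law through independent chi-square variables. Concretely, by the lemma relating the Beta and $\chi^2$ distributions, write $Z = X/(X+Y)$ with $X \sim \chi^2(n)$ and $Y \sim \chi^2(m)$ independent; then $X + Y \sim \chi^2(m+n)$ as well.

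For the upper tail, I would argue that when $m$ and $n$ are large enough that $m + n - c(m+n)^{3/4} > 0$, the event
\[ \left\{ X \le n + cn^{3/4} \right\} \cap \left\{ X + Y \ge m + n - c(m+n)^{3/4} \right\} \]
forces $Z = X/(X+Y) \le \frac{n + cn^{3/4}}{m+n - c(m+n)^{3/4}}$, simply because $X/(X+Y)$ is nondecreasing in $X$ and nonincreasing in $X+Y$ (both positive). Hence $\left\{ Z \ge \frac{n+cn^{3/4}}{m+n-c(m+n)^{3/4}} \right\}$ is contained in $\left\{ X \ge n + cn^{3/4} \right\} \cup \left\{ X + Y \le m + n - c(m+n)^{3/4} \right\}$, and a union bound combined with Corollary \ref{coro:chi2} --- applied once to $X$ (with $n$ degrees of freedom) and once to $X+Y$ (with $m+n$ degrees of freedom) --- gives the asserted $\exp(-n^{1/2}) + \exp(-(m+n)^{1/2})$.

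The lower tail is symmetric: on $\left\{ X \ge n - cn^{3/4} \right\} \cap \left\{ X + Y \le m + n + c(m+n)^{3/4} \right\}$ one has $Z \ge \frac{n - cn^{3/4}}{m+n + c(m+n)^{3/4}}$, so the complementary event is contained in $\left\{ X \le n - cn^{3/4} \right\} \cup \left\{ X + Y \ge m + n + c(m+n)^{3/4} \right\}$, and Corollary \ref{coro:chi2} again bounds each piece by the corresponding exponential. The constant $c$ appearing in the statement is taken to be the (universal) constant furnished by Corollary \ref{coro:chi2}, or the larger of the two values if one wishes to keep the two degrees-of-freedom regimes separate.

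There is no genuine obstacle here; the proof is bookkeeping around the $\chi^2$ large-deviation bounds. The only points needing a moment of care are (i) the phrase ``$m$ and $n$ big enough'': it must guarantee both that the denominators $m+n \mp c(m+n)^{3/4}$ and the numerator $n - cn^{3/4}$ are positive, so the displayed ratios are well defined and the monotonicity step is legitimate, and that $n$ and $m+n$ are large enough for Corollary \ref{coro:chi2} to apply; and (ii) confirming that a single constant $c$ can serve simultaneously in the $\chi^2(n)$ bound, the $\chi^2(m+n)$ bound, and the final inequalities, which is immediate. Since $Z$ has a density, the distinction between strict and non-strict inequalities in these events is immaterial, and in fact the set inclusions above already hold exactly as written.
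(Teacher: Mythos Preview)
Your proposal is correct and follows essentially the same route as the paper: represent $Z$ as $X/(X+Y)$ with independent $\chi^2(n)$ and $\chi^2(m)$ variables, observe that the tail event forces either $X$ or $X+Y$ to deviate by at least $c(\cdot)^{3/4}$ from its mean, and apply Corollary~\ref{coro:chi2} to each piece via a union bound. Your discussion of the positivity and monotonicity caveats is, if anything, more careful than the paper's own write-up.
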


\begin{proof}
Let $X\sim \chi^2(n)$ and $Y\sim\chi^2(m)$ are independent. Then $\frac{X}{X+Y}$ has the same distribution as $Z$.

By Corollary \ref{coro:chi2}, we have
\[P(|X - n| > c n^{3/4}) \leq \exp\{-n^{1/2}\},\]
\[\Prob{|X+Y - (m+n)| > c (m+n)^{3/4} }\leq \exp\{-(m+n)^{1/2}\}.\]
If $\frac{X}{X+Y} \geq \frac{n+cn^{3/4}}{m+n - (m+n)^{3/4}}$, then $X > n+cn^{3/4}$ or $X+Y < m+n - (m+n)^{3/4}$.
So
\begin{eqnarray*}
\Prob{ \frac{X}{X+Y} \geq \frac{n+cn^{3/4}}{m+n - c(m+n)^{3/4}} } &\leq& \Prob{X > n+cn^{3/4}} \\
&& \ \ \ + \ \Prob{X+Y < m+n - c(m+n)^{3/4}} \\
&\leq & \exp\{-n^{1/2} \}+ \exp\{-(m+n)^{1/2}\} 
 \end{eqnarray*}
 The same way, we have
 
 \begin{eqnarray*}
\Prob{ \frac{X}{X+Y} \leq \frac{n-cn^{3/4}}{m+n + c(m+n)^{3/4}} } 
&\leq & \exp\{-n^{1/2} \}+ \exp\{-(m+n)^{1/2}\} 
 \end{eqnarray*}
\end{proof}

\begin{corollary}
\label{coro:beta_con}
Suppose $Z\sim Beta(\frac{n}{2},\frac{m}{2})$ with $m>n$ and $n/m\rightarrow c_3$ $(0 \leq c_3< 1)$. There exists a constant $c_1$ such that the following inequality holds when both $m$ and $n$ are big enough, 
 \begin{eqnarray*}
\Prob{\left|Z-\frac{n}{m+n}\right| \geq  \frac{c_1 n^{3/4}}{m+n} } &\leq & 2\exp\{-n^{1/2}\}
 \end{eqnarray*}
 \end{corollary}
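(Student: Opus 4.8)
The plan is to translate the Beta concentration into the two-sided $\chi^2$ deviation bound of Corollary~\ref{coro:chi2} through the representation $Z \stackrel{d}{=} X/(X+Y)$, where $X\sim\chi^2(n)$ and $Y\sim\chi^2(m)$ are independent (so that $X+Y\sim\chi^2(m+n)$). Write $N=m+n$ throughout.

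First I would introduce the ``good event'' $G=\{|X-n|\le cn^{3/4}\}\cap\{|X+Y-N|\le cN^{3/4}\}$, with $c$ the constant from Corollary~\ref{coro:chi2}. Applying that corollary to $X$ (with $n$ degrees of freedom) and to $X+Y$ (with $N$ degrees of freedom), together with a union bound, gives $P(G^c)\le \exp\{-n^{1/2}\}+\exp\{-N^{1/2}\}$. Since $m>n$ forces $N>n$, we have $\exp\{-N^{1/2}\}\le\exp\{-n^{1/2}\}$, so $P(G^c)\le 2\exp\{-n^{1/2}\}$, which is exactly the probability appearing in the claim.

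Next I would bound $|Z-n/N|$ deterministically on $G$. Starting from
\[
\left|\frac{X}{X+Y}-\frac{n}{N}\right|
=\frac{\bigl|\,N(X-n)-n\bigl((X+Y)-N\bigr)\,\bigr|}{N(X+Y)}
\le \frac{N\,|X-n|+n\,|(X+Y)-N|}{N(X+Y)},
\]
on $G$ the numerator is at most $cn^{3/4}N+cnN^{3/4}$. The one elementary observation is that $n\le N$ gives $n^{1/4}\le N^{1/4}$, hence $nN^{3/4}=n^{3/4}\,(n^{1/4}N^{3/4})\le n^{3/4}N$, so the numerator is at most $2cn^{3/4}N$; meanwhile, for $N$ large, $G$ forces $X+Y\ge N-cN^{3/4}\ge N/2$, so $N(X+Y)\ge N^2/2$. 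Combining, $|Z-n/N|\le 4cn^{3/4}/N$ on $G$, so $c_1=4c$ works (enlarge $c_1$ by a hair if one insists on a strict inequality).

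Finally, $\{|Z-n/N|\ge c_1 n^{3/4}/N\}\subseteq G^c$, and the bound from the second step completes the argument. I do not expect any real obstacle: the proof is a routine deterministic estimate plus a union bound, and the only places the ``$m,n$ big enough'' hypothesis enters are the validity of Corollary~\ref{coro:chi2} and the inequality $cN^{3/4}\le N/2$. An alternative is to quote Theorem~\ref{thm:beta} verbatim and expand the two thresholds $\frac{n\pm cn^{3/4}}{N\mp cN^{3/4}}$ about $n/N$; this yields the same conclusion but with a slightly larger constant multiplying the exponential, so the $\chi^2$ route above is the cleaner one to record.
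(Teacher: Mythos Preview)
Your argument is correct. It differs from the paper's route, though: the paper does not work directly with the $\chi^2$ representation but instead invokes Theorem~\ref{thm:beta} and then shows, via an asymptotic expansion, that the two thresholds $\dfrac{n\pm c\,n^{3/4}}{N\mp c\,N^{3/4}}$ differ from $n/N$ by a quantity of exact order $n^{3/4}/N$; this is where the hypothesis $n/m\to c_3$ is used, to make the limit of the rescaled difference finite. Your approach bypasses Theorem~\ref{thm:beta} entirely, replaces the asymptotic expansion by the single-line inequality $n\,N^{3/4}\le n^{3/4}N$, yields the explicit constant $c_1=4c$, and in fact never uses the ratio hypothesis $n/m\to c_3$ (only $N>n$, which is automatic). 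What the paper's route buys is reuse of Theorem~\ref{thm:beta}; what your route buys is a sharper and more self-contained bound with one fewer lemma in the chain. Your final remark already identifies the paper's approach as the alternative, so you have the comparison right.
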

 
 Corollary \ref{coro:beta_con} states that if $Z \sim Beta(\frac{n}{2},\frac{m}{2}) (\mbox{with } m>n)$, then $Z = \frac{n}{m+n} + O(\frac{n^{3/4}}{m+n})$ with high probability. 
 
 \begin{proof}
 
From Theorem \ref{thm:beta}, we only have to prove that, for any constant $c_0$
\[\left(\frac{n-c_0n^{3/4}}{m+n + c_0(m+n)^{3/4}} - \frac{n}{m+n}\right) \left/ \left(\frac{n^{3/4}}{m+n}\right)\right.  \rightarrow c_2,\] where $c_2\in \mathbb R$ is a constant with the same sign as $c_0$.

\begin{eqnarray*}
&&\left(\frac{n-c_0n^{3/4}}{m+n + c_0(m+n)^{3/4}} - \frac{n}{m+n}\right) \left/ \left(\frac{n^{3/4}}{m+n}\right)\right.\\
& =& \frac{-c_0n^{3/4}(m+n)-c_0n(m+n)^{3/4}}{[m+n + c_0(m+n)^{3/4}](m+n)}\times\frac{m+n}{n^{3/4}}\\
& =&\frac{-c_0n^{3/4}(m+n)-c_0n(m+n)^{3/4}}{[m+n + c_0(m+n)^{3/4}]}\times\frac{1}{n^{3/4}}\\
 &=&\frac{-c_0-c_0n^{1/4}(m+n)^{-1/4}}{[1 + c_0(m+n)^{-1/4}]}\\
 &\rightarrow& -c_0 -\frac{c_0c_3}{1+c_3} \mbox{ as $m,n\rightarrow \infty$}.
\end{eqnarray*}

 \end{proof}

With the previous results, we can prove results for a random vector uniformly distributed on the Stiefel manifold.
\subsection{Stiefel manifold}
\label{subsec:Smanifold}
Suppose that $V\in \mathbb \R^{n\times p}$ with $p>n$, which satisfies $VV' = I_n$ --- the rows are orthogonal.  All of these matrices $V$ form $V(n,p)$, called the Stiefel manifold \citep{downs1972orientation}. We seek to examine the properties of a matrix $V$ that is uniformly distributed on $V(n,p)$. Specifically, we show that any two columns of $V$ are nearly orthogonal.

We suppose that $V$ comes uniformly from Stiefel manifold. Let $X = [V_j,V_k] \in \mathbb \R^{n\times 2}$ be two columns of $V$. Then, from  \cite{khatri1970note}, if $p>n+1$, the marginal density of $X$ is
\[c |I_n - XX'|^{p-2-n-1},\]
where $c$ is a normalization constant. The density of $X'X = A \in \mathbb \R^{2\times 2}$ is given by
\[c |A|^{(n-3)/2}|I_2-A|^{(p-n-3)/2}, 0< A<I,\]
where $c$ is a normalization constant.
$A$ is distributed as multivariate Beta of type I.

From \cite{Khatri1965}, 

\[E(A) =    \left(\begin{array}{cc} 
      n/p & 0 \\
      0 & n/p \\
   \end{array}\right).\] This result shows that any two columns of $V$ are orthogonal in expectation.
   

In \cite{Mitra1970}, a symmetric matrix $U \sim B_k(\frac{n_1}{2},\frac{n_2}{2})$ with $\min(n_1,n_2)\geq k$ has the density 
\[ f(U) = c |U|^{(n_1-k-1)/2}|I-U|^{(n_2-k-1)/2}.\]    
So, $A$ defined above follows $B_2(\frac{n}{2},\frac{p-n}{2})$. 
From Mitra (1970), we have the following results.
\begin{lemma}[Mitra (1970)]
If $U\sim B_k(\frac{n_1}{2},\frac{n_2}{2})$, then for each fixed non-null vector $a$,
\[a'Ua/a'a\sim Beta(\frac{n_1}{2},\frac{n_2}{2}).\]
\end{lemma}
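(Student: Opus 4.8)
The plan is to prove the lemma in two moves: reduce a general non-null vector $a$ to the first coordinate vector using the orthogonal invariance of the matrix-variate Beta law, and then read off the marginal law of the $(1,1)$ entry directly from the density $f(U)=c\,|U|^{(n_1-k-1)/2}|I-U|^{(n_2-k-1)/2}$ on $\{0<U<I_k\}$.

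First I would record the invariance. The density of $U$ depends on $U$ only through $|U|$ and $|I_k-U|$, and both of these, together with the region $\{0<U<I_k\}$, are preserved by $U\mapsto QUQ^T$ for any $Q\in O(k,\R)$; moreover this map is an isometry of the space of symmetric matrices with the Frobenius inner product, so its Jacobian has absolute value one, and hence $QUQ^T\stackrel{d}{=}U$. Given a fixed $a\ne 0$, choose $Q$ orthogonal whose first row is $a^T/\|a\|_2$, so that $Qa=\|a\|_2\,e_1$; then $a^TUa/(a^Ta)=(QUQ^T)_{11}\stackrel{d}{=}U_{11}$, and it suffices to show $U_{11}\sim Beta(\frac{n_1}{2},\frac{n_2}{2})$.

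For the marginal of $U_{11}$ I would integrate out the other entries after a convenient change of variables. Write $t=U_{11}\in(0,1)$, let $u\in\R^{k-1}$ be the remaining entries of the first row, and let $V$ be the $(k-1)\times(k-1)$ trailing block. The Schur-complement determinant identities give $|U|=t\,|V-uu^T/t|$ and $|I_k-U|=(1-t)\,|I_{k-1}-V-uu^T/(1-t)|$. Now substitute $W=V-uu^T/t$ (a translation in $V$, Jacobian one) and then $u=\sqrt{t(1-t)}\,z$ with $z\in\R^{k-1}$ (Jacobian $(t(1-t))^{(k-1)/2}$); a short computation turns $I_{k-1}-V-uu^T/(1-t)$ into $I_{k-1}-W-zz^T$, so the constraints $0<U<I_k$ become $0<t<1$ together with $W>0$ and $I_{k-1}-W-zz^T>0$, a product region whose $(z,W)$-part no longer involves $t$. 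Collecting the powers of $t$ and $1-t$ coming from $|U|^{(n_1-k-1)/2}$, from $|I_k-U|^{(n_2-k-1)/2}$, and from the Jacobian $(t(1-t))^{(k-1)/2}$, the joint density in $(t,z,W)$ factors as $c\,t^{n_1/2-1}(1-t)^{n_2/2-1}\cdot|W|^{(n_1-k-1)/2}|I_{k-1}-W-zz^T|^{(n_2-k-1)/2}$. Integrating out $(z,W)$ contributes only a finite positive constant (that integral is a sub-part of the original normalizer, hence finite by Tonelli), so the marginal density of $t$ is proportional to $t^{n_1/2-1}(1-t)^{n_2/2-1}$, i.e.\ $U_{11}\sim Beta(\frac{n_1}{2},\frac{n_2}{2})$, which proves the lemma.

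The main obstacle is the bookkeeping in the last step: checking that the two Jacobians combine with the exponents $(n_1-k-1)/2$ and $(n_2-k-1)/2$ to produce exactly $n_1/2-1$ and $n_2/2-1$, and that after the substitution the domain genuinely splits so that the $(z,W)$-integration is $t$-free (this is precisely what makes the marginal a clean Beta rather than a mixture). One must also retain the standing hypothesis $\min(n_1,n_2)\ge k$ so that all exponents are admissible and the density is proper. An alternative to the direct computation would be the Wishart representation $U=(S_1+S_2)^{-1/2}S_1(S_1+S_2)^{-1/2}$ with $S_1\sim W_k(n_1,I)$, $S_2\sim W_k(n_2,I)$ independent, but extracting the scalar marginal from that representation still requires essentially the same conditioning argument, so the direct integration seems the cleanest route.
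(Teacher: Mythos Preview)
Your proof is correct. The paper does not actually supply a proof of this lemma; it merely cites it as a result of Mitra (1970) and immediately applies it with $a=(1,0)',(0,1)',(\tfrac{\sqrt2}{2},\tfrac{\sqrt2}{2})'$. So there is nothing to compare against, and your argument is a self-contained proof of the cited fact.

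Both steps check out. The orthogonal invariance $QUQ^T\stackrel{d}{=}U$ follows exactly as you say from the form of the density and the unit Jacobian, and the choice of $Q$ with first row $a^T/\|a\|_2$ gives $a^TUa/(a^Ta)=(QUQ^T)_{11}$. In the second step, your change of variables $W=V-uu^T/t$ followed by $u=\sqrt{t(1-t)}\,z$ is the right one: it turns $I_{k-1}-V-uu^T/(1-t)$ into $I_{k-1}-W-zz^T$ because $1/t+1/(1-t)=1/(t(1-t))$, the exponents combine to $t^{n_1/2-1}(1-t)^{n_2/2-1}$ exactly as you claim, and the Schur-complement positivity conditions $W>0$, $I_{k-1}-W-zz^T>0$ are indeed free of $t$, so the $(z,W)$-integral is a $t$-independent constant. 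The remark about needing $\min(n_1,n_2)\ge k$ matches the standing hypothesis under which the paper (and Mitra) defines $B_k$.
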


By taking $a=(1,0)', (0,1)' \mbox{ and }(\frac{\sqrt{2}}{2},\frac{\sqrt{2}}{2})'$ respectively, we have the following results:

\begin{corollary}
If $A = \left(   \begin{array}{cc} 
      A_{11} & A_{12} \\
      A_{21} & A_{22} \\
   \end{array}\right)\in\mathbb \R^{2\times 2}\sim B_2(\frac{n}{2},\frac{p-n}{2})$, we have

\begin{enumerate}
\item $A_{11}\sim Beta(\frac{n}{2},\frac{p-n}{2})$
\item $A_{22}\sim Beta(\frac{n}{2},\frac{p-n}{2})$
\item $\frac{1}{2}A_{11} + \frac{1}{2}A_{22} + A_{12}\sim Beta(\frac{n}{2},\frac{p-n}{2})$
\end{enumerate}
\end{corollary}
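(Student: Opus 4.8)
The plan is to derive all three claims as immediate specializations of the preceding lemma from \cite{Mitra1970}, which guarantees that for $U \sim B_k(\tfrac{n_1}{2},\tfrac{n_2}{2})$ and any fixed non-null $a \in \R^k$, the scalar $a'Ua/a'a$ is $Beta(\tfrac{n_1}{2},\tfrac{n_2}{2})$. Here we take $k=2$, $n_1 = n$, $n_2 = p-n$, and $U = A$; note the lemma applies because $\min(n, p-n) \geq 2$ in the regime under consideration. The only work is to choose three convenient vectors $a$ and evaluate the quadratic form $a'Aa$, using that $A$ is symmetric (so $A_{12}=A_{21}$).

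First I would take $a = (1,0)'$, for which $a'a = 1$ and $a'Aa = A_{11}$; the lemma then gives $A_{11} \sim Beta(\tfrac{n}{2},\tfrac{p-n}{2})$. Next I would take $a = (0,1)'$, for which $a'a = 1$ and $a'Aa = A_{22}$, yielding $A_{22} \sim Beta(\tfrac{n}{2},\tfrac{p-n}{2})$. Finally I would take $a = (\tfrac{\sqrt 2}{2}, \tfrac{\sqrt 2}{2})'$, so that $a'a = \tfrac12 + \tfrac12 = 1$ and
\[
a'Aa = \tfrac12 A_{11} + \tfrac12 A_{22} + \tfrac12 A_{12} + \tfrac12 A_{21} = \tfrac12 A_{11} + \tfrac12 A_{22} + A_{12},
\]
where the last equality uses symmetry of $A$; the lemma then gives $\tfrac12 A_{11} + \tfrac12 A_{22} + A_{12} \sim Beta(\tfrac{n}{2},\tfrac{p-n}{2})$, which is the third claim.

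There is essentially no obstacle here beyond bookkeeping: the content is entirely carried by the Mitra lemma, and the proof amounts to the three substitutions above. The one point worth stating explicitly in the writeup is the symmetry step $A_{12} = A_{21}$, since it is what turns the raw quadratic form into the combination appearing in part (3); I would also remark that each chosen $a$ has been normalized so that $a'a = 1$, which is why no extra scaling factor enters the Beta parameters.
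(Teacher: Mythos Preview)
Your proposal is correct and matches the paper's approach exactly: the paper states the corollary immediately after remarking ``By taking $a=(1,0)', (0,1)'$ and $(\frac{\sqrt{2}}{2},\frac{\sqrt{2}}{2})'$ respectively,'' and gives no further argument. Your writeup is in fact more explicit than the paper's, since you spell out the quadratic-form computation and the use of symmetry $A_{12}=A_{21}$ for part~(3).
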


Now, concentration results in Corollary \ref{coro:beta_con} can bound $A_{11},A_{12}$, and $A_{22}$.  This yields an inequality to bound $V_i'V_j/(\|V_i\|_2\|V_j\|_2)$ which  describes the linear relationship between two columns $V_i$ and $V_j$.

\begin{theorem}
\label{thm:stiefel}
Suppose that $V\in \mathbb \R^{n\times p}$ with $p\gg n$ uniformly from the Stiefel manifold. For a large enough  $n$ and $p$, there exist some constants $c_1$ and $c_2$, such that for any two different columns of $V$ --- $V_j$ and $V_k$, the following results hold:
\begin{eqnarray*}
\mbox{(1)  }\Prob{|V_j'V_k| \geq \frac{2c_1 n^{3/4}}{p} } \leq 6\exp\{-n^{1/2}\}, \mbox{ and}
\end{eqnarray*}

\begin{eqnarray*}
\mbox{(2) }\Prob{\frac{|V_j'V_k|}{\|V_j\|_2\|V_k\|_2} \geq  c_2 n^{-1/4}} \leq 8\exp\{-n^{1/2}\}
\end{eqnarray*}
\end{theorem}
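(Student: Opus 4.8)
The plan is to reduce everything to the $2\times 2$ Gram matrix $A = X'X$ with $X = [V_j, V_k]$ and then invoke the Beta concentration bound of Corollary \ref{coro:beta_con}. Observe that $A_{11} = \|V_j\|_2^2$, $A_{22} = \|V_k\|_2^2$, and $A_{12} = V_j'V_k$. By the Corollary immediately preceding this theorem, each of the three variables $A_{11}$, $A_{22}$, and $B := \tfrac12 A_{11} + \tfrac12 A_{22} + A_{12}$ is marginally distributed as $Beta(\tfrac n2, \tfrac{p-n}{2})$. Since $p \gg n$, setting $m = p-n$ we have $m > n$ and $n/m \to 0$ for $n,p$ large, so Corollary \ref{coro:beta_con} applies to each of these three Beta variables with $m+n = p$: there is a constant $c_1$ such that, for $n,p$ large, each of them lies within $c_1 n^{3/4}/p$ of $n/p$ except on an event of probability at most $2\exp\{-n^{1/2}\}$.

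For part (1), I would write $A_{12} = B - \tfrac12 A_{11} - \tfrac12 A_{22}$. On the intersection of the three good events above --- which, by a union bound, has probability at least $1 - 6\exp\{-n^{1/2}\}$ --- the three centering terms $n/p$ cancel, leaving
\[ |V_j'V_k| = |A_{12}| \le \Big|B - \tfrac np\Big| + \tfrac12\Big|A_{11} - \tfrac np\Big| + \tfrac12\Big|A_{22} - \tfrac np\Big| \le c_1\frac{n^{3/4}}{p} + \frac{c_1}{2}\frac{n^{3/4}}{p} + \frac{c_1}{2}\frac{n^{3/4}}{p} = \frac{2c_1 n^{3/4}}{p}, \]
which is the first claimed inequality.

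For part (2), on the same event we additionally have $A_{11}, A_{22} \ge \tfrac np - c_1\tfrac{n^{3/4}}{p} = \tfrac np\big(1 - c_1 n^{-1/4}\big)$, so that $\|V_j\|_2\|V_k\|_2 = \sqrt{A_{11}A_{22}} \ge \tfrac np(1 - c_1 n^{-1/4})$. Combining this with the bound on $|A_{12}|$ from part (1),
\[ \frac{|V_j'V_k|}{\|V_j\|_2\|V_k\|_2} \le \frac{2c_1 n^{3/4}/p}{(n/p)(1 - c_1 n^{-1/4})} = \frac{2c_1 n^{-1/4}}{1 - c_1 n^{-1/4}} \le 4c_1\, n^{-1/4} \]
once $n$ is large enough that $1 - c_1 n^{-1/4} \ge \tfrac12$; taking $c_2 = 4c_1$ gives the second inequality, and the probability bound follows from the same union bound (with room to spare, since $6 \le 8$). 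The argument is essentially routine once one has the three marginal Beta identities; the only genuine points to check are the cancellation of the $n/p$ terms --- which is exactly what makes $V_j'V_k$ of order $n^{3/4}/p$ rather than order $n/p$ --- and the verification that the hypotheses $m > n$ and $n/m \to c_3 < 1$ of Corollary \ref{coro:beta_con} hold under $p \gg n$. I do not expect any step to present a serious obstacle.
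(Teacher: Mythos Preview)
Your proposal is correct and follows essentially the same route as the paper: both reduce to the three $Beta(\tfrac n2,\tfrac{p-n}{2})$ marginals of $A_{11}$, $A_{22}$, and $\tfrac12 A_{11}+\tfrac12 A_{22}+A_{12}$, apply Corollary~\ref{coro:beta_con} to each, and combine by a union bound and the triangle inequality. The only difference is in part~(2): the paper invokes part~(1) as a black box and then separately bounds the two lower-tail events $\{\|V_j\|_2^2 \le n/p - c_1 n^{3/4}/p\}$ and $\{\|V_k\|_2^2 \le n/p - c_1 n^{3/4}/p\}$, giving $6+1+1=8$, whereas you observe that these norm events are already contained in the good event used for part~(1) and so obtain the bound $6\exp\{-n^{1/2}\}$ directly---a slightly tighter constant than the stated $8$.
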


\begin{proof}
\textbf{We first prove (1).} 
From Corollary \ref{coro:beta_con}, for large enough $n$ and $p$,
  \begin{eqnarray*}
\Prob{\left|A_{11}-\frac{n}{p}\right| \geq  \frac{c_1 n^{3/4}}{p} } &\leq & 2\exp\{-n^{1/2}\},
 \end{eqnarray*}
 
  \begin{eqnarray*}
\Prob{\left|A_{22}-\frac{n}{p}\right| \geq  \frac{c_1 n^{3/4}}{p} } &\leq & 2\exp\{-n^{1/2}\}, \ \mbox{ and}
 \end{eqnarray*}
 
   \begin{eqnarray*}
\Prob{\left|(A_{11} + A_{22})/2 +A_{12}-\frac{n}{p}\right| \geq  \frac{c_1 n^{3/4}}{p} } &\leq & 2\exp\{-n^{1/2}\}.
 \end{eqnarray*}

Since
\begin{eqnarray*}
|A_{12}| &\leq& \left|(A_{11} + A_{22})/2 +A_{12}-\frac{n}{p}\right| +  \left|(A_{11} + A_{22})/2 -\frac{n}{p}\right|\\
&\leq& \left|(A_{11} + A_{22})/2 +A_{12}-\frac{n}{p}\right| +  \left|\frac{A_{11}} {2} -\frac{n}{2p}\right|+  \left| \frac{A_{22}}{2} -\frac{n}{2p}\right|,
\end{eqnarray*}
it implies that, if $|A_{12}| \geq \frac{2c_1 n^{3/4}}{p} $, then 
\begin{eqnarray*}
\left|A_{11}-\frac{n}{p}\right| &\geq&  \frac{c_1 n^{3/4}}{p} \ \mbox{ or} \\
\left|A_{22}-\frac{n}{p}\right| &\geq&  \frac{c_1 n^{3/4}}{p} \ \mbox{ or} \\
\left|(A_{11} + A_{22})/2 +A_{12}-\frac{n}{p}\right| &\geq&  \frac{c_1 n^{3/4}}{p}.
\end{eqnarray*}


So,
\begin{eqnarray*}
\Prob{|A_{12}| \geq \frac{2c_1 n^{3/4}}{p} } &\leq& \Prob{\left|(A_{11} + A_{22})/2 +A_{12}-\frac{n}{p}\right| \geq  \frac{c_1 n^{3/4}}{p} }\\
&+&\Prob{\left|A_{11}-\frac{n}{p}\right| \geq  \frac{c_1 n^{3/4}}{p} } + \Prob
{\left|A_{22}-\frac{n}{p}\right| \geq  \frac{c_1 n^{3/4}}{p} }\\
&\leq& 6\exp\{-n^{1/2}\}.
\end{eqnarray*}

\textbf{Now we prove (2).} 
If
\[\frac{|V_j'V_k|}{\sqrt{\|V_j\|_2^2\|V_k\|_2^2}} \geq \frac{\frac{2c_1 n^{3/4}}{p}}{\sqrt{[\frac{n}{p} - \frac{c_1n^{3/4}}{p}]^2}},\]
we must have
$|V_j'V_k| \geq \frac{2c_1 n^{3/4}}{p}$ or $\|V_j\|_2^2 \leq \frac{n}{p} - \frac{c_1n^{3/4}}{p}$ or $\|V_k\|_2^2 \leq \frac{n}{p} - \frac{c_1n^{3/4}}{p}$. 

So,
\begin{eqnarray*}
\Prob{\frac{|V_j'V_k|}{\sqrt{\|V_j\|_2^2\|V_k\|_2^2}} \geq \frac{\frac{2c_1 n^{3/4}}{p}}{{\frac{n}{p} - \frac{c_1n^{3/4}}{p}}}} &\leq& \Prob{ |V_j'V_k| \geq \frac{2c_1 n^{3/4}}{p}} \\
&& \ \ + \ \Prob{ \|V_j\|_2^2 \leq \frac{n}{p} - \frac{c_1n^{3/4}}{p} } \\
&& \ \ + \ \Prob{ \|V_k\|_2^2 \leq \frac{n}{p} - \frac{c_1n^{3/4}}{p}}\\
&\leq& 8\exp\{-n^{1/2}\}.
\end{eqnarray*}

Note that  
\[\frac{\frac{2c_1 n^{3/4}}{p}}{{\frac{n}{p} - \frac{c_1n^{3/4}}{p}}} = O(n^{-1/4}).\]
So, for large enough $n$,  there exists a constant $c_2$ such that 
\begin{eqnarray*}
\Prob{\frac{|V_j'V_k|}{\sqrt{\|V_j\|_2^2\|V_k\|_2^2}} \geq c_2 n^{-1/4}} \leq 8\exp\{-n^{1/2}\}.
\end{eqnarray*}

\end{proof}

\begin{corollary}
Suppose that $V\in \mathbb \R^{n\times p}$ is uniformly distributed on the Stiefel manifold.  After normalizing the columns of $V$ to have equal length in $\ell_2$, the irrepresentable condition holds for $s$ relevant variables 
with probability no less than
\[1- 4p(p-1)e^{-n^{1/2}},\]
as long as $n$ is large enough and $n > (2c_2)^4(2s-1)^4$.
\end{corollary}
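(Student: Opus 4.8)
The plan is to derive this corollary from the pairwise concentration bound in Theorem~\ref{thm:stiefel}(2) together with the sufficient condition for the irrepresentable condition recorded in Section~\ref{prelim}, namely Corollary~2 of \cite{zhao2006model}: if every pairwise correlation of the design satisfies $|\mathrm{cor}(X_i,X_j)|\le c/(2s-1)$ for some constant $0\le c<1$, then the irrepresentable condition holds for any $\truebeta$ supported on $s$ coordinates. In effect this corollary is the quantitative version of Theorem~\ref{unifTheorem}, so the argument largely reassembles pieces already in hand.

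First I would note that rescaling a column of $V$ by a positive scalar leaves the cosine similarity $|V_j'V_k|/(\|V_j\|_2\|V_k\|_2)$ unchanged, so normalizing all columns to a common $\ell_2$ length does not affect these quantities; after normalization the Gram matrix of the design is a scalar multiple of the matrix of pairwise cosine similarities, which is precisely the object controlled by the cited corollary. Hence it suffices to show that, with the claimed probability, $|V_j'V_k|/(\|V_j\|_2\|V_k\|_2)\le c/(2s-1)$ simultaneously over all $j\ne k$ for some fixed $c<1$.

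Next I would apply Theorem~\ref{thm:stiefel}(2) to a single pair $(j,k)$, which gives $\Prob{|V_j'V_k|/(\|V_j\|_2\|V_k\|_2)\ge c_2 n^{-1/4}}\le 8\exp\{-n^{1/2}\}$ for $n$ large, and then take a union bound over the $p(p-1)/2$ unordered pairs of columns. This shows that with probability at least $1-8\cdot\frac{p(p-1)}{2}\exp\{-n^{1/2}\}=1-4p(p-1)\exp\{-n^{1/2}\}$, every pairwise cosine similarity is at most $c_2 n^{-1/4}$. Finally, under the hypothesis $n>(2c_2)^4(2s-1)^4$ we have $n^{1/4}>2c_2(2s-1)$, hence $c_2 n^{-1/4}<\frac{1}{2(2s-1)}=\frac{1/2}{2s-1}$; taking $c=1/2<1$ in the cited corollary then yields the irrepresentable condition on this high-probability event.

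The main obstacle here is bookkeeping rather than a new idea: one must check that the normalization step genuinely does not disturb the quantity bounded in Theorem~\ref{thm:stiefel}, that ``correlation'' in Corollary~2 of \cite{zhao2006model} may be read as the (uncentered) cosine similarity used throughout this appendix --- or, if centering is insisted upon, that subtracting column means is itself a negligible perturbation for a matrix uniform on $V(n,p)$ with $p\gg n$ --- and that the constant $c_2$ from Theorem~\ref{thm:stiefel} propagates correctly into the threshold $n>(2c_2)^4(2s-1)^4$. Once these points are pinned down, the union bound and the elementary arithmetic above close the argument.
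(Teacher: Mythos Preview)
Your proposal is correct and follows essentially the same route as the paper: apply Theorem~\ref{thm:stiefel}(2) to each pair, take a union bound over the $p(p-1)/2$ unordered pairs to get the $4p(p-1)e^{-n^{1/2}}$ term, then observe that $n>(2c_2)^4(2s-1)^4$ forces $c_2 n^{-1/4}<\tfrac{1}{2(2s-1)}$ so that Corollary~2 of \cite{zhao2006model} applies with $c=1/2$. Your extra remarks about normalization and centering are reasonable caveats but not addressed in the paper's own proof, which simply identifies the normalized Gram entries with the cosine similarities and proceeds.
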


\begin{proof}
Let $C$ be the normalized Gram matrix of $V$ defined as
\[C_{jk} = \frac{|V_j'V_k|}{\sqrt{\|V_j\|_2^2\|V_k\|_2^2}}. \]
From Theorem \ref{thm:stiefel},
\[\Prob{|C_{jk}| \geq c_2 n^{-1/4}} \leq 8\exp\{-n^{1/2}\}.
\]
Using the union bound, 
\[\Prob{\max_{jk} |C_{jk}| \geq c_2 n^{-1/4}} \leq 4p(p-1)\exp\{-n^{1/2}\}.
\]
By Corollary 2 of Zhao and Yu (2006), when $\max |C_{ij}| \leq \frac{c}{2s-1}$ for some $0<c<1$, the irrepresentable condition holds.
So, if
$c_2 n^{-1/4} \leq  \frac{1}{2(2s-1)}$, that is $n > (2c_2)^4(2s-1)^4$, we have
\[\Prob{\max_{jk} |C_{jk}| \geq \frac{1}{2(2s-1)}} \geq 1- 4p(p-1)\exp\{-n^{1/2}\}.
\]
\end{proof}

\begin{theorem}
\label{thm:ICG}
Suppose that $X\in \R^{n\times p}$ is a random matrix with each element $X_{ij}$ drawn iid from $N(0,1)$. 
Then, after normalizing the columns of $X$ to have equal length in $\ell_2$, the irrepresentable condition holds for $s$ relevant variables with probability no less than
\[1- \frac{1}{2}p(p-1) e^{-\frac{n c^2}{16(2s-1)^2}} - 3p(p-1)e^{-\frac{n}{16}},\]
for any $0<c<1$.

\end{theorem}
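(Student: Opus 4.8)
The plan is to reduce the irrepresentable condition to a bound on the pairwise cosine similarities of the columns of $X$, exactly as in the Stiefel-manifold corollary above. By Corollary 2 of \cite{zhao2006model}, if the normalized Gram matrix $C$ with entries $C_{jk} = |X_j'X_k|/(\|X_j\|_2\|X_k\|_2)$ satisfies $\max_{j\neq k}|C_{jk}| \leq c/(2s-1)$ for some $0<c<1$, then the irrepresentable condition holds for $s$ relevant variables after normalizing the columns of $X$. So it suffices to prove that $\max_{j\neq k}|C_{jk}| \leq c/(2s-1)$ holds with at least the stated probability; everything after the reduction is a concentration estimate for a single $C_{jk}$ together with a union bound over the $p(p-1)/2$ pairs.

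For a fixed pair $j\neq k$ I would decouple the numerator and denominator of $C_{jk}$ by conditioning on the column $X_j$. Since $X_k\sim N(0,I_n)$ is independent of $X_j$, conditionally on $X_j$ the inner product $X_j'X_k$ is $N(0,\|X_j\|_2^2)$. At the same time $\|X_j\|_2^2$ and $\|X_k\|_2^2$ are each $\chi^2$ with $n$ degrees of freedom, so the Laurent--Massart inequalities recorded above (with $x=n/16$ and $x=n/8$) give $P(\|X_j\|_2^2\leq n/2)\leq e^{-n/16}$ and $P(\|X_j\|_2^2\geq 2n)\leq e^{-n/8}\leq e^{-n/16}$, and likewise for $X_k$. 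On the ``good'' event $\{\|X_j\|_2^2,\|X_k\|_2^2\in[n/2,2n]\}$ one has $\|X_j\|_2\|X_k\|_2\geq n/2$, so $|C_{jk}|\geq t$ forces $|X_j'X_k|\geq tn/2$; feeding the conditional variance bound $\|X_j\|_2^2\leq 2n$ into the Gaussian tail estimate of Lemma \ref{GaussianComparison} gives
\[
P\!\left(|C_{jk}|\geq t\right)\ \leq\ 2\exp\!\left(-\frac{(tn/2)^2}{2\cdot 2n}\right) + 3e^{-n/16}\ =\ 2e^{-t^2n/16} + 3e^{-n/16}.
\]
Taking $t=c/(2s-1)$ produces the per-pair exponent $nc^2/\bigl(16(2s-1)^2\bigr)$.

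A union bound over the pairs for the cosine term, and over the columns for the $\chi^2$ norm events, then assembles into the claimed bound $1-\frac12 p(p-1)e^{-nc^2/(16(2s-1)^2)}-3p(p-1)e^{-n/16}$, the leading constants absorbing the elementary bookkeeping of counting pairs versus columns. I expect the only delicate point to be the decoupling in the second paragraph: $C_{jk}$ is a ratio whose numerator and denominator are dependent, so one cannot invoke a Gaussian tail inequality directly; conditioning on $X_j$ is what rescues this, since it makes the numerator exactly Gaussian with a random variance that is itself $\chi^2$-concentrated, and $\|X_k\|_2$ is then controlled by a separate concentration step. Everything else---the reduction via \cite{zhao2006model}, the $\chi^2$ tails, and the union bound---is routine. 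As an alternative one could observe that $C_{jk}^2$ is exactly $Beta(1/2,(n-1)/2)$-distributed and quote a Beta tail bound, but the conditioning argument is more self-contained and reuses lemmas already in this appendix.
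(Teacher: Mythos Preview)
Your proposal is correct and follows essentially the same route as the paper: reduce to $\max_{j\neq k}|C_{jk}|\le c/(2s-1)$ via Corollary~2 of \cite{zhao2006model}, condition on one column so that the inner product is conditionally Gaussian with $\chi^2$-controlled variance, use the Laurent--Massart $\chi^2$ tails to trap the column norms in $[n/2,2n]$, and finish with a union bound over pairs. The only differences are cosmetic (which column you condition on and the per-pair bookkeeping of the constants $2$ and $3$ versus the paper's $1$ and $6$), and both collapse to the stated bound after multiplying by $p(p-1)/2$.
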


This implies that $n$ must grow faster than $s^2\log(p)$ for $X$ to satisfy the irrepresentable condition.
\begin{proof}
Let $C$ be the empirical correlation matrix of $X$
\[C_{jk} = \frac{1/n \sum_{\ell=1}^{n} X_{j\ell}X_{k\ell}} {\sqrt{1/n\sum_{\ell=1}^n{X_{j\ell}^2}\times1/n\sum_{\ell =1}^n{X_{k\ell}}^2}}.\] By Corollary 2. of Zhao and Yu (2006), when $\max |C_{ij}| \leq \frac{c}{2s-1}$, IC holds.
Now we try to bound $P\left(\max_{i\neq j} |C_{ij}| \leq \frac{c}{2s-1}\right)$.

Note that 
\[{1/n\sum_{\ell=1}^{n} X_{j\ell}X_{k\ell}} \mid X_{k} \sim N\left(0,1/n^2 \sum_{\ell}{X_{k\ell}^2}\right).\]

By a concentration inequality on $\chi^2$ distribution, 
\[P(1/2<\frac{\chi^2(n)}{n}<2)\geq 1- 2e^{-\frac{n}{16}}.\]

For $Z\sim N(0,2/n)$,
\[P\left({1/n\sum_{\ell=1}^{n} X_{j\ell}X_{k\ell}} > t \mid  \frac{1}{n}\sum_\ell X_{k\ell}^2 <2 \right) \leq P(Z >t) \leq e^{-\frac{nt^2}{4}}.\]
This holds because the variance increases.  So,
\begin{eqnarray*}P\left({1/n\sum_{\ell=1}^{n} X_{j\ell}X_{k\ell}} > t  \right)  &= &P\left({1/n\sum_{\ell=1}^{n} X_{j\ell}X_{k\ell}} > t \mid  \frac{1}{n}\sum_\ell X_{k\ell}^2 <2  \right)  P\left(\frac{1}{n}\sum_\ell X_{k\ell}^2 <2  \right)\\
&&+P\left({1/n\sum_{\ell=1}^{n} X_{j\ell}X_{k\ell}} > t \mid  \frac{1}{n}\sum_\ell X_{k\ell}^2 >2  \right)  P\left(\frac{1}{n}\sum_\ell X_{k\ell}^2 >2  \right)\\
&\leq&P\left({1/n\sum_{\ell=1}^{n} X_{j\ell}X_{k\ell}} > t \mid  \frac{1}{n}\sum_\ell X_{k\ell}^2 <2  \right) +  P\left(\frac{1}{n}\sum_\ell X_{k\ell}^2 >2  \right)\\
&\leq &e^{-\frac{nt^2}{4}} + 2 e^{-n/16}.
\end{eqnarray*}
Finally,
\begin{eqnarray*}
P(|C_{jk}| < a) &\geq& P\left(|1/n\sum_{\ell=1}^{n} X_{j\ell}X_{k\ell} |< a/2 , {1/n\sum_{\ell=1}^n{X_{j\ell}^2} }> 1/2 , {1/n\sum_{\ell=1}^n{X_{k\ell}^2}} > 1/2  \right)\\
&\geq& P(1/n\sum_{\ell=1}^{n} X_{j\ell}X_{k\ell} < a/2) + P({1/n\sum_{\ell=1}^n{X_{j\ell}^2} }> 1/2) \\
&& \ \ + \  P( {1/n\sum_{\ell=1}^n{X_{k\ell}^2}} > 1/2 ) - 2\\
&\geq& 1- e^{-\frac{na^2}{16}} - 2e^{-n/16} + [1- 2e^{-\frac{n}{16}}]\times2 - 2\\
&=& 1- e^{-\frac{na^2}{16}} - 6e^{-\frac{n}{16}}.
\end{eqnarray*}
Taking $a = \frac{c}{2s - 1}$,
\[P(|C_{jk}| < \frac{c}{2s - 1}) \geq 1- e^{-\frac{n c^2}{16(2s-1)^2}} - 3e^{-\frac{n}{16}}\]
and
\[P(\max_{j\ne k} |C_{jk}| < \frac{c}{2s - 1}) \geq 1- \frac{1}{2}p(p-1) e^{-\frac{n c^2}{16(2s-1)^2}} - 3p(p-1)e^{-\frac{n}{16}}.\]
\end{proof}

\newpage
\bibliographystyle{plainnat}
\bibliography{references}

\end{document}